\documentclass{amsart}

\usepackage[all]{xy}

\usepackage{fixmath}

\newtheorem{lemma}{Lemma}
\newtheorem{theorem}{Theorem}
\newtheorem{proposition}{Proposition}
\newtheorem{corollary}{Corollary}
\theoremstyle{definition}
\newtheorem{definition}{Definition}
\newtheorem{notation}{Notation}
\newtheorem{remark}{Remark}
\newtheorem{example}{Example}

\newcommand{\courbe}[7]{\qbezier(#1,#3)(#4,#7)(#5,#7)
\qbezier(#5,#7)(#6,#7)(#2,#3)}

\newcommand{\NN}{\mathbb{N}^*}

\newcommand{\bs}[1]{\boldsymbol{#1}}

\begin{document}

\title[Orbit duality in ind-varieties of generalized flags]{Orbit duality in ind-varieties of maximal generalized flags}
\author{Lucas Fresse}
\address{Universit\'e de Lorraine, CNRS, Institut \'Elie Cartan de Lorraine, UMR 7502, Van\-doeuvre-l\`es-Nancy, F-54506 France}
\email{lucas.fresse@univ-lorraine.fr}
\author{Ivan Penkov}
\address{Jacobs University Bremen, Campus Ring 1, 28759 Bremen, Germany}
\email{i.penkov@jacobs-university.de}

\dedicatory{To Ernest Borisovich Vinberg \\ on the occasion of his 80th birthday}

\begin{abstract}
We extend Matsuki duality to arbitrary ind-varieties of maximal generalized flags, in other words, to any homogeneous ind-variety $\mathbf{G}/\mathbf{B}$ for a classical ind-group $\mathbf{G}$ and a splitting Borel ind-subgroup $\mathbf{B}\subset\mathbf{G}$.  As a first step, we present an explicit combinatorial version of Matsuki duality in the finite-dimensional case, involving an explicit parametrization of $K$- and $G^0$-orbits on $G/B$.  After proving Matsuki duality in the infinite-dimensional case, we give necessary and sufficient conditions on a Borel ind-subgroup $\mathbf{B}\subset\mathbf{G}$ for the existence of open and closed $\mathbf{K}$- and $\mathbf{G}^0$-orbits on $\mathbf{G}/\mathbf{B}$, where $\left(\mathbf{K},\mathbf{G}^0\right)$ is an aligned pair of a symmetric ind-subgroup $\mathbf{K}$ and a real form $\mathbf{G}^0$ of $\mathbf{G}$.
\end{abstract}

\keywords{Classical ind-groups; generalized flags; symmetric pairs; real forms; Matsuki duality}
\subjclass[2010]{14L30; 14M15; 22E65; 22F30}

\maketitle

\section{Introduction}

\label{S.intro}

In this paper we extend Matsuki duality to ind-varieties of maximal generalized flags, i.e., to homogeneous ind-spaces of the form $\mathbf{G}/\mathbf{B}$ for $\mathbf{G}=\mathrm{GL}(\infty)$, $\mathrm{SL}(\infty)$, $\mathrm{SO}(\infty)$, $\mathrm{Sp}(\infty)$.  In the case of a finite-dimensional reductive algebraic group $G$, Matsuki duality \cite{Gindikin-Matsuki, Matsuki, Matsuki3} is a bijection between the (finite) set of $K$-orbits on $G/B$ and the set of $G^0$-orbits on $G/B$, where $K$ is a symmetric subgroup of $G$ and $G^0$ is a real form of $G$.  Moreover, this bijection reverses the inclusion relation between orbit closures.  In particular, the remarkable theorem about the uniqueness of a closed $G^0$-orbit on $G/B$, see \cite{W1}, follows via Matsuki duality from the uniqueness of a (Zariski) open $K$-orbit on $G/B$.
In the monograph \cite{FHW}, Matsuki duality has been used as the starting point in a study of cycle spaces.

If $\mathbf{G}=\mathrm{GL}(\infty)$, $\mathrm{SL}(\infty)$, $\mathrm{SO}(\infty)$, $\mathrm{Sp}(\infty)$ is a classical ind-group, then its Borel ind-subgroups are neither $\mathbf{G}$-conjugate nor $\mathrm{Aut}(\mathbf{G})$-conjugate, hence there are many ind-varieties of the form $\mathbf{G}/\mathbf{B}$.  We show that Matsuki duality extends to any ind-variety $\mathbf{G}/\mathbf{B}$ where $\mathbf{B}$ is a splitting Borel ind-subgroup of $\mathbf{G}$ for $\mathbf{G}=\mathrm{GL}(\infty)$, $\mathrm{SL}(\infty)$, $\mathrm{SO}(\infty)$, $\mathrm{Sp}(\infty)$.  In the infinite-dimensional case, the structure of $\mathbf{G}^0$-orbits and $\mathbf{K}$-orbits on $\mathbf{G}/\mathbf{B}$ is more complicated than in the finite-dimensional case, and there are always infinitely many orbits.  

A first study of the $\mathbf{G}^0$-orbits on $\mathbf{G}/\mathbf{B}$ for $\mathbf{G}=\mathrm{GL}(\infty),\mathrm{SL}(\infty)$ was done in~\cite{Ignatyev-Penkov-Wolf} and was continued in~\cite{W}.  In particular, in \cite{Ignatyev-Penkov-Wolf} it was shown that, for some real forms $\mathbf{G}^0$, there are splitting Borel ind-subgroups $\mathbf{B}\subset \mathbf{G}$ such that $\mathbf{G}/\mathbf{B}$ has neither an open nor a closed $\mathbf{G}^0$-orbit.  
We know of no prior studies of the structure of $\mathbf{K}$-orbits on $\mathbf{G}/\mathbf{B}$ of $\mathbf{G}=\mathrm{GL}(\infty),\mathrm{SL}(\infty),\mathrm{SO}(\infty),\mathrm{Sp}(\infty)$.  The duality we establish in this paper shows that the structure of $\mathbf{K}$-orbits on $\mathbf{G}/\mathbf{B}$ is a ``mirror image'' of the structure of $\mathbf{G}^0$-orbits on $\mathbf{G}/\mathbf{B}$.  In particular, the fact that $\mathbf{G}/\mathbf{B}$ admits at most one closed $\mathbf{G}^0$-orbit is now a corollary of the obvious statement that $\mathbf{G}/\mathbf{B}$ admits at most one Zariski-open $\mathbf{K}$-orbit.  


Our main result can be stated as follows.
Let $(\mathbf{G},\mathbf{K},\mathbf{G}^0)$ be one of the triples 
listed in Section \ref{S2.1}
consisting of
a classical (complex) ind-group $\mathbf{G}$, a symmetric ind-subgroup $\mathbf{K}\subset\mathbf{G}$,
and the corresponding real form $\mathbf{G}^0\subset\mathbf{G}$.
Let $\mathbf{B}\subset\mathbf{G}$ be a splitting Borel ind-subgroup
such that $\mathbf{X}:=\mathbf{G}/\mathbf{B}$ is 
an ind-variety of maximal generalized flags (isotropic, in types B, C, D)
weakly compatible with a basis adapted to 
the choice of $\mathbf{K}$, $\mathbf{G}^0$ in the sense of Sections \ref{S2.1}, \ref{S2.3}.
There are natural exhaustions $\mathbf{G}=\bigcup_{n\geq 1}G_n$
and $\mathbf{X}=\bigcup_{n\geq 1}X_n$.
Here $G_n$ is a finite-dimensional algebraic group,
$X_n$ is the full flag variety of $G_n$,
and the inclusion $X_n\subset\mathbf{X}$ is in particular $G_n$-equivariant.
Moreover $K_n:=\mathbf{K}\cap G_n$ and $G^0_n:=\mathbf{G}^0\cap G_n$
are respectively a symmetric subgroup and the corresponding real form of $G_n$.
See Section \ref{S4.4} for more details.

\begin{theorem}
\label{theorem-1}
\begin{itemize}
\item[\rm (a)] For every $n\geq 1$ the inclusion $X_n\subset \mathbf{X}$
induces embeddings of orbit sets $X_n/K_n\hookrightarrow \mathbf{X}/\mathbf{K}$
and $X_n/G^0_n\hookrightarrow \mathbf{X}/\mathbf{G}^0$.
\item[\rm (b)] There is a bijection $\Xi:\mathbf{X}/\mathbf{K}\to\mathbf{X}/\mathbf{G}^0$
such that the diagram
\[
\xymatrix{
X_n/K_n \ar@{->}[d]^{\Xi_n} \ar@{^{(}->}[r] & \mathbf{X}/\mathbf{K} \ar@{->}[d]^{\Xi} \\
X_n/G^0_n \ar@{^{(}->}[r] & \mathbf{X}/\mathbf{G}^0
}
\]
is commutative, where $\Xi_n$ stands for Matsuki duality.
\item[\rm (c)] For every $\mathbf{K}$-orbit $\bs{\mathcal{O}}\subset\mathbf{X}$
the intersection $\bs{\mathcal{O}}\cap\Xi(\bs{\mathcal{O}})$ consists of a single $\mathbf{K}\cap\mathbf{G}^0$-orbit.
\item[\rm (d)] The bijection $\Xi$ reverses the inclusion relation of orbit closures.
In particular $\Xi$ maps open (resp., closed) $\mathbf{K}$-orbits to
closed (resp., open) $\mathbf{G}^0$-orbits.
\end{itemize}
\end{theorem}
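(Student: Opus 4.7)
My plan is to reduce the theorem to finite-dimensional Matsuki duality $\Xi_n\colon X_n/K_n\to X_n/G^0_n$ via a direct-limit argument. The crucial preliminary step is the \emph{exactness} of the filtrations with respect to the group actions: for all $m\geq n$ and $x\in X_n$, one has $K_m\cdot x\cap X_n=K_n\cdot x$ and $G^0_m\cdot x\cap X_n=G^0_n\cdot x$; equivalently, each $K_m$-orbit (resp.\ $G^0_m$-orbit) on $X_m$ restricts to a single $K_n$-orbit (resp.\ $G^0_n$-orbit) on $X_n$. This compatibility is precisely what the weak-compatibility hypothesis on $\mathbf{B}$ relative to an adapted basis is designed to encode, and I would verify it using the explicit combinatorial parametrization of $K_n$-orbits on $X_n$ developed in the finite-dimensional part of the paper: the embedding $X_n\hookrightarrow X_m$ sends orbit parameters injectively into orbit parameters.

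Granting exactness, part (a) is immediate: if $x,y\in X_n$ with $y=g\cdot x$ for some $g\in\mathbf{K}$, pick $m\geq n$ with $g\in K_m$; then $y\in K_m\cdot x\cap X_n=K_n\cdot x$, and the $\mathbf{G}^0$-case is identical. For part (b) I would further check that the finite-dimensional dualities are functorial along the inclusions, in the sense that the restriction of $\Xi_m$ to the image of $X_n/K_n$ in $X_m/K_m$ coincides with $\Xi_n$. The explicit combinatorial form of $\Xi_n$ makes this verification direct: under the embedding of orbit parameters, the Matsuki involution commutes with the inclusion. The direct limit $\Xi:=\varinjlim\Xi_n$ then defines a bijection $\mathbf{X}/\mathbf{K}\to\mathbf{X}/\mathbf{G}^0$ for which the square in (b) commutes tautologically.

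Part (c) descends from finite dimensions. Pick $x\in\bs{\mathcal{O}}\cap\Xi(\bs{\mathcal{O}})$ and $n$ with $x\in X_n$; by exactness $\bs{\mathcal{O}}\cap X_n$ is the single $K_n$-orbit $\mathcal{O}_n$ containing $x$ and $\Xi(\bs{\mathcal{O}})\cap X_n=\Xi_n(\mathcal{O}_n)$, and the finite-dimensional theorem gives $\mathcal{O}_n\cap\Xi_n(\mathcal{O}_n)=(K_n\cap G^0_n)\cdot x$. Any second point $y$ in the intersection lies with $x$ in some common $X_{n'}$, so the same reasoning at level $n'$ places $y$ in $(K_{n'}\cap G^0_{n'})\cdot x\subseteq(\mathbf{K}\cap\mathbf{G}^0)\cdot x$. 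For part (d), since each $X_n$ is closed in $\mathbf{X}$, Zariski closure in $\mathbf{X}$ commutes with intersection against $X_n$; hence an inclusion $\bs{\mathcal{O}}\subseteq\overline{\bs{\mathcal{O}}'}$ localizes, for $n$ large, to $\mathcal{O}_n\subseteq\overline{\mathcal{O}_n'}$ in $X_n$, and finite-dimensional closure reversal gives $\Xi_n(\mathcal{O}_n')\subseteq\overline{\Xi_n(\mathcal{O}_n)}$; taking unions over $n$ yields $\Xi(\bs{\mathcal{O}}')\subseteq\overline{\Xi(\bs{\mathcal{O}})}$. I expect the main obstacle to be the exactness step: establishing that weak compatibility of $\mathbf{B}$ really forces orbits of the larger groups to restrict without fragmentation, after which every remaining item is a direct-limit formality resting on the explicit combinatorial form of $\Xi_n$.
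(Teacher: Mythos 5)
Your route is essentially the paper's. The ``exactness'' you isolate as the crux is exactly what the paper establishes: Propositions \ref{P4-1.1}, \ref{P4.2-2}, \ref{P4.3} give the joint combinatorial parametrization of $\mathbf{K}$- and $\mathbf{G}^0$-orbits on $\mathbf{X}$, and the relations (\ref{T-proof.1}), (\ref{T-proof.2}), (\ref{T-proof.3}) say precisely that each ind-orbit meets $X_n$ in a single $K_n$- (resp.\ $G^0_n$-) orbit with matching parameter, which is equivalent to your statement $K_m\cdot x\cap X_n=K_n\cdot x$ and to the compatibility of the parameter maps $j_n$ with the embeddings. Your derivations of (a), (b), (c) from this are the same direct-limit argument the paper uses, including the observation for (c) that two special points lie in a common $X_{n'}$.

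The one step where your justification is weaker than what is needed is in (d): the assertion that ``Zariski closure in $\mathbf{X}$ commutes with intersection against $X_n$ because $X_n$ is closed'' is not a formality. Only the inclusion $\overline{Z\cap X_n}\subseteq\overline{Z}\cap X_n$ is automatic; to localize $\bs{\mathcal{O}}\subseteq\overline{\bs{\mathcal{O}}'}$ to $\mathcal{O}_n\subseteq\overline{\mathcal{O}'_n}$ you need the reverse inclusion $\overline{\bs{\mathcal{O}}'}\cap X_n\subseteq\overline{\bs{\mathcal{O}}'\cap X_n}$, equivalently that $\bigcup_m\overline{\mathcal{O}'_m}$ is closed in the ind-topology, which amounts to compatibility of the finite-level closure orders with the embeddings ($\overline{\mathcal{O}'_m}\cap X_n=\overline{\mathcal{O}'_n}$ for $m\geq n$) --- an orbit-theoretic fact, not a consequence of $X_n$ being closed. (The converse localization, and hence the statement that open $\mathbf{K}$-orbits go to closed $\mathbf{G}^0$-orbits and vice versa, does follow formally from (a)--(b) and the definition of the ind-topology, as in the paper.) The paper itself disposes of (d) in one sentence, so you are at parity with it in substance, but if you want a complete argument you should either prove the closure-exactness just described or restrict the claim to what the formal direction yields.
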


Actually our results are much more precise: in Propositions \ref{P4-1.1}, \ref{P4.2-2}, \ref{P4.3} we show that $\mathbf{X}/\mathbf{K}$ and $\mathbf{X}/\mathbf{G}^0$ admit the same explicit parametrization which is nothing but the inductive limit of suitable joint parametrizations of $X_n/K_n$ and $X_n/G_n^0$.  This yields the bijection $\Xi$ of Theorem \ref{theorem-1}\,{\rm (b)}. Parts {\rm (a)} and {\rm (b)} of Theorem \ref{theorem-1} are
implied by our claims (\ref{T-proof.1}), (\ref{T-proof.2}), (\ref{T-proof.3}) below. Theorem \ref{theorem-1}\,{\rm (c)} follows from the corresponding statements in Propositions \ref{P4-1.1}, \ref{P4.2-2}, \ref{P4.3}.
Finally, Theorem \ref{theorem-1}\,{\rm (d)} is implied by 
Theorem \ref{theorem-1}\,{\rm (a)}--{\rm (b)}, the definition of the ind-topology, 
and the fact that the duality $\Xi_n$ reverses the inclusion relation between orbit closures.

\subsection*{Organization of the paper}

In Section \ref{S2} we introduce the notation for classical ind-groups, symmetric ind-subgroups, and real forms.
We recall some basic facts on finite-dimensional flag varieties, as well as the notion of ind-variety of generalized flags \cite{Dimitrov-Penkov,Ignatyev-Penkov}.
In Section \ref{S3} we give 
the joint parametrization of $K$- and $G^0$-orbits in a finite-dimensional flag variety.
This parametrization should be known in principle (see \cite{MO,Y})
but we have not found a reference where it would appear exactly as we present it. For the sake of completeness we provide full proofs of these results.
In
Section \ref{S4} we state our main results on the parametrization of $\mathbf{K}$- and $\mathbf{G}^0$-orbits in ind-varieties of generalized flags. Theorem \ref{theorem-1} above is a consequence of these results.
In Section \ref{S5} we point out some further corollaries of our main results.

In what follows $\NN$ stands for the set of positive integers.
$|A|$ stands for the cardinality of a set $A$.  The symmetric group on $n$ letters is denoted by $\mathfrak{S}_n$ and $\mathfrak{S}_\infty=\lim\limits_{\longrightarrow}\,\mathfrak{S}_n$ stands for the infinite symmetric group. 
Often we write $w_k$ for the image $w(k)$ of $k$ by a permutation $w$.
By $(k;\ell)$ we denote the transposition that switches $k$ and $\ell$.
We use boldface letters to denote ind-varieties.  
An index of notation can be found at the end of the paper.

\subsection*{Acknowledgement}

We thank Alan Huckleberry and Mikhail Ignatyev for their encouragement to study Matsuki duality.  The first author was supported in part by ISF Grant Nr. 797/14 and by ANR project GeoLie (ANR-15-CE40-0012).  The second author was supported in part by DFG Grant PE 980/6-1.

\section{Notation and preliminary facts}

\label{S2}

\subsection{Classical groups and classical ind-groups}

\label{S2.1}

Let $\mathbf{V}$ be a complex vector space of countable dimension, with a basis
$E=(e_1,e_2,\ldots)=(e_\ell)_{\ell\in \NN}$.  
Every vector $x\in\mathbf{V}$ is identified with the column of its coordinates in the basis $E$,
and $x\mapsto \overline{x}$ stands for complex conjugation with respect to $E$. 
We also consider the finite dimensional subspace $V=V_n:=\langle e_1,\ldots,e_n\rangle_\mathbb{C}$ of $\mathrm{V}$.

The classical ind-group $\mathrm{GL}(\infty)$ is defined as
\[\mathrm{GL}(\infty)=\mathbf{G}(E):=\{g\in\mathrm{Aut}(\mathbf{V}):g(e_\ell)=e_\ell\mbox{ for all $\ell\gg 1$}\}=\bigcup_{n\geq 1}\mathrm{GL}(V_n).\]
%
The real forms of $\mathrm{GL}(\infty)$ are well known and can be traced back to the work of Baranov~\cite{Baranov}.  Below we list aligned pairs $(\mathbf{K},\mathbf{G}^0)$, where $\textbf{G}^0$ is a real form of $\mathbf{G}$ and $\mathbf{K}\subset\mathbf{G}$ is a symmetric ind-subgroup of $\mathbf{G}$.  The pairs $\left(\mathbf{K},\mathbf{G}^0\right)$ we consider are aligned in the following way: there exists an exhaustion of $\mathbf{G}$ as a union $\bigcup_n\,\mathrm{GL}\left(V_n\right)$ such that $K_n:=\mathbf{K}\cap\mathrm{GL}\left(V_n\right)$ is a symmetric subgroup of $\mathrm{GL}\left(V_n\right)$, $G_0^n:=\mathbf{G}^0\cap\mathrm{GL}\left(V_n\right)$ is a real form of $\mathrm{GL}\left(V_n\right)$, and $K_n\cap G_n^0$ is a maximal compact subgroup of $G_n^0$.

\subsubsection{Types A1 and A2}
\label{S:A1A2}

Let $\Omega$ be a $\NN\times\NN$-matrix of the form
\begin{equation}
\label{omega}
\Omega=\left(
\begin{array}{cccc}
J_1 & & (0) \\
 & J_2 \\
(0) & & \ddots
\end{array}
\right)\ 
\mbox{where}\ 
\left\{
\begin{array}{ll}
J_k\in\left\{\left(\begin{matrix} 0 & 1 \\ 1 & 0 \end{matrix}\right),\left(1\right)\right\}
& \parbox{2.6cm}{(orthogonal case, \\ type A1),} \\[4mm]
J_k=\left(\begin{matrix} 0 & 1 \\ -1 & 0 \end{matrix}\right) & \parbox{2.6cm}{(symplectic case, \\ type A2).}
\end{array}
\right.
\end{equation}
The bilinear form
\[\omega(x,y):={}^tx\Omega y\quad (x,y\in \mathbf{V})\]
is symmetric in type A1 and symplectic in type A2, whereas the map
\[\gamma(x):=\Omega\overline{x}\quad (x\in \mathbf{V})\]
is an involution of $\mathbf{V}$ in type A1 and an antiinvolution in type A2.
Let
\begin{eqnarray*}
 & & \mathbf{K}=\mathbf{G}(E,\omega):=\{g\in \mathbf{G}(E):\omega(gx,gy)=\omega(x,y)\ \forall x,y\in\mathbf{V}\} \\[1mm]
 & \mbox{and} & \mathbf{G}^0:=\{g\in \mathbf{G}(E):\gamma(gx)=g\gamma(x)\ \forall x\in\mathbf{V}\}.
\end{eqnarray*}

\subsubsection{Type A3}
\label{S:A3}
Fix a (proper) decomposition $\NN=N_+\sqcup N_-$ and let
\begin{equation}
\label{phi}
\Phi=\left(
\begin{array}{cccc}
\epsilon_1 & & (0) \\
 & \epsilon_2 \\
(0) & & \ddots
\end{array}
\right)
\end{equation}
where $\epsilon_\ell=1$ for $\ell\in N_+$ and $\epsilon_\ell=-1$ for $\ell\in N_-$.
Thus
\[\phi(x,y):={}^t\overline{x} \Phi y\quad(x,y\in\mathbf{V})\]
is a Hermitian form of signature $(|N_+|,|N_-|)$ and
\[\delta(x):=\Phi x\quad(x\in\mathbf{V})\]
is an involution. Finally let
\[
\mathbf{K}:=\{g\in \mathbf{G}(E):\delta(gx)=g\delta(x)\ \forall
x\in\mathbf{V}\}
\]
and
\[
\mathbf{G}^0:=\{g\in \mathbf{G}(E):\phi(gx,gy)=\phi(x,y)\ \forall
 x,y\in\mathbf{V}\}.
\]



\subsection*{Types B, C, D}
Next we describe pairs $(\mathbf{K},\mathbf{G}^0)$ associated to
the other classical ind-groups $\mathrm{SO}(\infty)$ and $\mathrm{Sp}(\infty)$.  Let $\mathbf{G}=\mathbf{G}(E,\omega)$
where $\omega$ is a (symmetric or symplectic) bilinear form given by a matrix $\Omega$ as in (\ref{omega}).
In view of (\ref{omega}), for every $\ell\in\NN$ there is a unique $\ell^*\in\NN$ such that
\[\omega(e_\ell,e_{\ell^*})\not=0.\]
Moreover $\ell^*\in\{\ell-1,\ell,\ell+1\}$. The map $\ell\mapsto\ell^*$ is an involution of $\NN$.


\subsubsection{Types BD1 and C2}
\label{S:BD1C2}

Assume that $\omega$ is symmetric in type BD1 and symplectic in type C2.
Fix a (proper) decomposition $\NN=N_+\sqcup N_-$ such that
\[\forall \ell\in\NN,\ \ell\in N_+\Leftrightarrow \ell^*\in N_+\]
and the restriction of $\omega$ on each of the subspaces $\mathbf{V}_+:=\langle e_\ell:\ell\in N_+\rangle_\mathbb{C}$ and
$\mathbf{V}_-:=\langle e_\ell:\ell\in N_-\rangle_\mathbb{C}$ is nondegenerate.
Let $\Phi,\phi,\delta$ be as in Section~\ref{S:A3}.  Then we set
\begin{eqnarray}
\label{N1}
 \mathbf{K}:=\{g\in \mathbf{G}(E,\omega):\delta(gx)=g\delta(x)\ \forall
x\in\mathbf{V}\}
\end{eqnarray}
and
\begin{eqnarray}
\label{N2}
 \mathbf{G}^0:=\{g\in \mathbf{G}(E,\omega):\phi(gx,gy)=\phi(x,y)\ \forall
 x,y\in\mathbf{V}\}.
\end{eqnarray}


\subsubsection{Types C1 and D3}
\label{S:C1D3}
Assume that $\omega$ is symmetric in type D3 and symplectic in type C1.
Fix a decomposition $\NN=N_+\sqcup N_-$ satisfying
\[
\forall \ell\in\NN,\ \ell\in N_+\Leftrightarrow \ell^*\in N_-.
\]
Note that this forces every block $J_k$ in (\ref{omega}) to be of size $2$.
In this situation $\mathbf{V}_+:=\langle e_\ell:\ell\in N_+\rangle_\mathbb{C}$ and $\mathbf{V}_-:=\langle e_\ell:\ell\in N_-\rangle_\mathbb{C}$
are maximal isotropic subspaces for the form $\omega$.
Let $\Phi,\phi,\delta$ be as in Section~\ref{S:A3}. Finally, we define the ind-subgroups $\mathbf{K},\mathbf{G}^0\subset\mathbf{G}$ as in (\ref{N1}), (\ref{N2}).


\subsection*{Finite-dimensional case} 
The following table summarizes the form of the intersections $G=\textbf{G}\cap\mathrm{GL}\left(V_n\right)$, $K=\textbf{K}\cap\mathrm{GL}\left(V_n\right)$, $G^0=\textbf{G}^0\cap\mathrm{GL}\left(V_n\right)$, where $n=2m$ is even whenever we are in types A2, C1, C2, and D3.
In types A3, BD1, and C2, we set $(p,q)=(|N_+\cap\{1,\ldots,n\}|,|N_-\cap\{1,\ldots,n\}|)$.
By $\mathbb{H}$ we denote the skew field of quaternions.
In this way we retrieve the classical finite-dimensional symmetric pairs and real forms
(see, e.g., \cite{Berger,OV,Otha1}).

\begin{center}
\renewcommand{\arraystretch}{1.25}
\begin{tabular}{|c|c|c|c|}
\hline
type & $G:=\mathbf{G}\cap\mathrm{GL}\left(V_n\right)$ & $K:=\mathbf{K}\cap\mathrm{GL}\left(V_n\right)$ & $G^0:=\mathbf{G}^0\cap\mathrm{GL}\left(V_n\right)$ \\ 
\hline
A1 & & $\mathrm{O}_n(\mathbb{C})$ & $\mathrm{GL}_n(\mathbb{R})$ \\
A2 & $\mathrm{GL}_n(\mathbb{C})$ & $\mathrm{Sp}_n(\mathbb{C})$ & $\mathrm{GL}_m(\mathbb{H})$ \\
A3 & & $\mathrm{GL}_p(\mathbb{C})\times\mathrm{GL}_q(\mathbb{C})$ & $\mathrm{U}_{p,q}(\mathbb{C})$ \\
\hline
BD1 & $\mathrm{O}_n(\mathbb{C})$ & $\mathrm{O}_p(\mathbb{C})\times\mathrm{O}_q(\mathbb{C})$ & $\mathrm{O}_{p,q}(\mathbb{C})$ \\
\hline
C1 & ${}_{{}_{\mbox{$\mathrm{Sp}_n(\mathbb{C})$}}}$ & $\mathrm{GL}_m(\mathbb{C})$ & $\mathrm{Sp}_n(\mathbb{R})$ \\[-1.5mm]
C2 & & $\mathrm{Sp}_p(\mathbb{C})\times\mathrm{Sp}_q(\mathbb{C})$ & $\mathrm{Sp}_{p,q}(\mathbb{C})$ \\
\hline
D3 & $\mathrm{O}_n(\mathbb{C})=\mathrm{O}_{2m}(\mathbb{C})$ & $\mathrm{GL}_m(\mathbb{C})$ & $\mathrm{O}^*_n(\mathbb{C})$ \\
\hline
\end{tabular}
\end{center}
In each case $G^0$ is a real form obtained from $K$ so that $K\cap G^0$ is a maximal compact subgroup of $G^0$.
Conversely $K$ is obtained from $G^0$ as the complexification of a maximal compact subgroup.

\subsection{Finite-dimensional flag varieties}

\label{S2.2}

Recall that $V=V_n$.
The flag variety $X:=\mathrm{GL}(V)/B=\{gB:g\in \mathrm{GL}(V)\}$ (for a Borel subgroup $B\subset \mathrm{GL}(V)$)
can as well be viewed as the set of Borel subgroups $\{gBg^{-1}:g\in \mathrm{GL}(V)\}$
or as the set of complete flags
\begin{equation}
\label{2.2.X}
\big\{\mathcal{F}=(F_0\subset F_1\subset\ldots\subset F_n=V): \dim F_k=k\ \mbox{ for all $k$}\big\}.
\end{equation}
For every complete flag $\mathcal{F}$ let $B_\mathcal{F}:=\{g\in \mathrm{GL}(V):g\mathcal{F}=\mathcal{F}\}$ denote the corresponding Borel subgroup.
When $(v_1,\ldots,v_n)$ is a basis of $V$ we write
\[\mathcal{F}(v_1,\ldots,v_n):=\big(0\subset\langle v_1\rangle_\mathbb{C}\subset\langle v_1,v_2\rangle_\mathbb{C}\subset\ldots\subset\langle v_1,\ldots,v_n\rangle_\mathbb{C}\big)\in X.\]

\medskip

\paragraph{{\bf Bruhat decomposition}}
The double flag variety $X\times X$ has a finite number
of $\mathrm{GL}(V)$-orbits parametrized by permutations $w\in\mathfrak{S}_n$.
Specifically, given two flags $\mathcal{F}=(F_k)_{k=0}^n$ and
$\mathcal{F}'=(F'_\ell)_{\ell=0}^n$ there is a unique permutation
$w=:w(\mathcal{F},\mathcal{F}')$ such that
\[\dim F_k\cap F'_\ell=\big|\big\{j\in\{1,\ldots,\ell\}:w_j\in\{1,\ldots,k\}\big\}\big|.\]
The permutation $w(\mathcal{F},\mathcal{F}')$ is called the relative position of $(\mathcal{F},\mathcal{F}')\in X\times X$.
Then
\[X\times X=\bigsqcup_{w\in\mathfrak{S}_n}\mathbb{O}_w
\quad\mbox{where $\mathbb{O}_w:=\big\{(\mathcal{F},\mathcal{F}')\in X\times X:w(\mathcal{F},\mathcal{F}')=w\big\}$}\]
is the decomposition of $X\times X$ into $\mathrm{GL}(V)$-orbits.
The unique closed orbit is $\mathbb{O}_{\mathrm{id}}$ and the unique open
orbit is $\mathbb{O}_{w_0}$ where $w_0$ is the involution given by
$w_0(k)=n-k+1$ for all $k$. The map
$\mathbb{O}_w\mapsto\mathbb{O}_{w_0w}$ is an involution on the set
of orbits and reverses inclusions between orbit closures.
Representatives of $\mathbb{O}_w$ can be obtained as follows:
for every basis $(v_1,\ldots,v_n)$ of $V$ we have
\[\big(\mathcal{F}(v_1,\ldots,v_n),\mathcal{F}(v_{w_1},\ldots,v_{w_n})\big)\in\mathbb{O}_w.\]

\medskip

\paragraph{{\bf Variety of isotropic flags}} Let $V$ be endowed with a nondegenerate symmetric or symplectic bilinear form $\omega$. 
For a subspace $F\subset V$, set $F^\perp=\{x\in V:\omega(x,y)=0\ \forall y\in F\}$.
The variety of isotropic flags is the subvariety $X_\omega$ of $X$, where
\begin{equation}
\label{2.2.Xomega}
X_\omega=\{\mathcal{F}=(F_k)_{k=0}^n\in X:F_k^\perp=F_{n-k}\ \forall k=0,\ldots,n\}.
\end{equation}
It is endowed with a transitive action of the subgroup
$G(V,\omega)\subset\mathrm{GL}(V)$ of automorphisms preserving $\omega$. 

\begin{lemma}
\label{lemma-2.2.1}
{\rm (a)}
For every endomorphism $f\in\mathrm{End}(V)$, let $f^*\in\mathrm{End}(V)$ denote the endomorphism adjoint to $f$ with respect to $\omega$.
Let $H\subset \mathrm{GL}(V)$ be a subgroup satisfying the condition
\begin{equation}
\label{2.2.1}
\mathbb{C}[g^*g]\cap\mathrm{GL}(V)\subset H\ \mbox{ for all $g\in H$}.
\end{equation}
Assume that $\mathcal{F}\in X_\omega$ and $\mathcal{F}'\in X_\omega$ belong to the same $H$-orbit of $X$.  Then they belong to the same $H\cap G(V,\omega)$-orbit of $X_\omega$. \\
{\rm (b)} Let $H=\{g\in \mathrm{GL}(V):g(V_+)=V_+,\ g(V_-)=V_-\}$ where $V=V_+\oplus V_-$ is a decomposition such that
$(V_+^\perp,V_-^\perp)=(V_+,V_-)$ or $(V_-,V_+)$.
Then (\ref{2.2.1}) is fulfilled.
\end{lemma}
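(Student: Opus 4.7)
For part (a), the plan is a polar-decomposition-style argument carried out internally to the polynomial algebra $\mathbb{C}[g^*g]$. Writing $\mathcal{F}=(F_k)_{k=0}^n$ and $\mathcal{F}'=g\mathcal{F}$, the definition of the adjoint yields $(gF_k)^\perp=(g^*)^{-1}(F_k^\perp)$, and combining this with the two isotropy conditions $F_k^\perp=F_{n-k}$ and $(gF_k)^\perp=gF_{n-k}$ gives $gF_{n-k}=(g^*)^{-1}F_{n-k}$, equivalently $A\cdot F_{n-k}=F_{n-k}$ for all $k$, where $A:=g^*g$. Hence $A$, and therefore every polynomial in $A$, stabilises the flag $\mathcal{F}$.

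Next I would produce a polynomial $p\in\mathbb{C}[x]$ with $p(A)^2=A^{-1}$. Since $g$ is invertible, so is $A$, so every root of the minimal polynomial $\mu$ of $A$ is nonzero, and $\mathbb{C}[A]\cong\mathbb{C}[x]/(\mu(x))\cong\prod_i\mathbb{C}[x]/((x-\lambda_i)^{m_i})$ with $\lambda_i\neq 0$. In each local factor the truncated binomial series $\lambda_i^{-1/2}\sum_{k<m_i}\binom{-1/2}{k}(\lambda_i^{-1}(x-\lambda_i))^k$ provides a polynomial square root of $x^{-1}$ (after a choice of $\lambda_i^{-1/2}$), and the global $p$ is assembled via the Chinese Remainder Theorem. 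Setting $h:=g\cdot p(A)$, one computes $h^*h=p(A)\cdot A\cdot p(A)=A\cdot p(A)^2=A\cdot A^{-1}=\mathrm{id}$, so $h\in G(V,\omega)$; since $p(A)\in\mathbb{C}[g^*g]\cap\mathrm{GL}(V)\subset H$ by hypothesis (\ref{2.2.1}) and $g\in H$, one has $h\in H$; and since $p(A)$ is an invertible operator stabilising each $F_k$, it stabilises $\mathcal{F}$, giving $h\mathcal{F}=g\mathcal{F}=\mathcal{F}'$. This finishes (a).

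For (b) it suffices to verify that $g\in H$ forces $g^*\in H$, because then $\mathbb{C}[g^*g]$ is contained in the subalgebra of $\mathrm{End}(V)$ stabilising both $V_+$ and $V_-$, and intersecting with $\mathrm{GL}(V)$ one lands in $H$. To see $g^*(V_+)\subset V_+$: in the Lagrangian case $V_+^\perp=V_+$, for $y\in V_+$ and any $x\in V_+$ one has $\omega(x,g^*y)=\omega(gx,y)=0$ since $gx\in V_+$ and $V_+$ is isotropic, hence $g^*y\in V_+^\perp=V_+$; in the dual case $V_+^\perp=V_-$, for $y\in V_+$ and $x\in V_-$ one has $\omega(x,g^*y)=\omega(gx,y)=0$ since $gx\in V_-$ and $y\in V_+=V_-^\perp$, whence $g^*y\in V_-^\perp=V_+$. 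The inclusion $g^*(V_-)\subset V_-$ is symmetric.

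The main obstacle I foresee is the polynomial square root step: it is essential to build $p(A)$ \emph{inside} $\mathbb{C}[A]$ so that hypothesis (\ref{2.2.1}) can be applied, rather than invoking a holomorphic functional calculus that might produce an operator not manifestly polynomial in $A$.
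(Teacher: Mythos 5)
Your proof is correct and takes essentially the same route as the paper: both arguments correct $g$ by an invertible element of $\mathbb{C}[g^*g]$ that fixes $\mathcal{F}$, the paper invoking Jantzen's Lemma 1.5 for a polynomial square root $P(g_1)^2=g_1$ and setting $h_1=gP(g_1)^{-1}$, while you build the inverse square root $p(A)^2=A^{-1}$ explicitly (CRT plus truncated binomial series) and set $h=gp(A)$. Part (b) likewise matches the paper's argument, with the verification that $g\in H$ implies $g^*\in H$ merely written out directly instead of via the identity $g^*(gF)^\perp=F^\perp$.
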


\begin{proof}
{\rm (a)}
Note that $G(V,\omega)=\{g\in\mathrm{GL}(V):g^*=g^{-1}\}$.
Consider $g\in H$ such that $\mathcal{F}'=g\mathcal{F}$.
The equality $(gF)^\perp=(g^*)^{-1}F^\perp$ holds for all subspaces $F\subset V$.
Since $\mathcal{F},\mathcal{F}'$ belong to $X_\omega$ we have $\mathcal{F}'=(g^*)^{-1}\mathcal{F}$,
hence $g^*g\mathcal{F}=\mathcal{F}$.
Let $g_1=g^*g$.
By \cite[Lemma 1.5]{Jantzen} there is a polynomial $P(t)\in\mathbb{C}[t]$ such that 
$P(g_1)^2=g_1$. Set $h=P(g_1)$.
Then $h\in\mathrm{GL}(V)$ (since $h^2=g_1\in\mathrm{GL}(V)$), and (\ref{2.2.1}) shows that actually $h\in H$.
Moreover $h^*=h$ (since $h\in\mathbb{C}[g_1]$ and $g_1^*=g_1$)
and $h\mathcal{F}=\mathcal{F}$ (as each subspace in $\mathcal{F}$ is $g_1$-stable hence also $h$-stable).
Set $h_1:=gh^{-1}\in H$.
Then, on the one hand,
\[h_1^*=(h^*)^{-1}g^*=h^{-1}g_1g^{-1}=h^{-1}h^2g^{-1}=hg^{-1}=h_1^{-1}\,.\]
Thus $h_1\in H\cap G(V,\omega)$, 
and on the other hand,
$h_1\mathcal{F}=gh^{-1}\mathcal{F}=g\mathcal{F}=\mathcal{F}'$. \\
{\rm (b)}
The equality $g^*(gF)^\perp=F^\perp$ (already mentioned) applied to $F=V_\pm$ yields $g^*\in H$, and thus $g^*g\in H$, whenever $g\in H$. This implies (\ref{2.2.1}).
\end{proof}

\begin{remark}
The proof of Lemma \ref{lemma-2.2.1}\,{\rm (a)} is inspired by \cite[\S1.4]{Jantzen}. We also refer to \cite{Nishiyama, Otha} for similar results and generalizations.
\end{remark}

\subsection{Ind-varieties of generalized flags}

\label{S2.3}

Recall that $\mathbf{V}$ denotes a complex vector space of countable dimension,
with a basis $E=(e_\ell)_{\ell\in\NN}$.

\begin{definition}[\cite{Dimitrov-Penkov}]
Let $\mathcal{F}$ be a chain of subspaces in $\mathbf{V}$, i.e., a set of subspaces of $\mathbf{V}$ which is totally ordered by inclusion. Let $\mathcal{F}'$ (resp., $\mathcal{F}''$) be the subchain consisting of all $F\in\mathcal{F}$ with an immediate successor (resp., an immediate predecessor).
By $s(F)\in\mathcal{F}''$ we denote the immediate successor of $F\in\mathcal{F}'$.

A {\em generalized flag} in $\mathbf{V}$ is a chain of subspaces $\mathcal{F}$ such that:
\begin{itemize}
\item[\rm (i)] each $F\in\mathcal{F}$ has an immediate successor or predecessor,
i.e., $\mathcal{F}=\mathcal{F}'\cup\mathcal{F}''$;
\item[\rm (ii)] for every $v\in\mathbf{V}\setminus\{0\}$ there is a unique $F_v\in\mathcal{F}'$ such that $v\in s(F_v)\setminus F_v$, i.e.,
$\mathbf{V}\setminus\{0\}=\bigcup_{F\in\mathcal{F}'}(s(F)\setminus F)$.
\end{itemize}

A generalized flag is {\em maximal} if it is not properly contained in another generalized flag.
Specifically, $\mathcal{F}$ is maximal if and only if $\dim s(F)/F=1$ for all $F\in\mathcal{F}'$.
\end{definition}

\begin{notation}
Let $\sigma:\NN\to (A,\prec)$ be a surjective map onto a totally ordered set.
Let $\underline{v}=(v_1,v_2,\ldots)$ be a basis of $\mathbf{V}$.
For every $a\in A$, let 
\[F'_a=\langle v_\ell:\sigma(\ell)\prec a\rangle_\mathbb{C},\quad 
F''_a=\langle v_\ell:\sigma(\ell)\preceq a\rangle_\mathbb{C}.\]
Then $\mathcal{F}=\mathcal{F}_\sigma(\underline{v}):=\{F'_a,F''_a:a\in A\}$ is a generalized flag such that
$\mathcal{F}'=\{F'_a:a\in A\}$, $\mathcal{F}''=\{F''_a:a\in A\}$, and $s(F'_a)=F''_a$ for all $a$.
We call such a generalized flag {\em compatible with the basis $\underline{v}$}.

Moreover, $\mathcal{F}_\sigma(\underline{v})$ is maximal if and only if the map $\sigma$ is bijective.

We use the abbreviation $\mathcal{F}_\sigma:=\mathcal{F}_\sigma(E)$.

Note that every generalized flag has a compatible basis \cite[Proposition 4.1]{Dimitrov-Penkov}.
A generalized flag is {\em weakly compatible with $E$} if it is compatible with some basis $\underline{v}$ such that $E\setminus(E\cap \underline{v})$ is finite (equivalently, $\dim\mathbf{V}/\langle E\cap \underline{v}\rangle_\mathbb{C}<\infty$).
\end{notation}

The group $\mathbf{G}(E)$ (as well as $\mathrm{Aut}(\mathbf{V})$) acts on generalized flags in a natural way.
Let $\mathbf{P}_\mathcal{F}\subset\mathbf{G}(E)$ denote the ind-subgroup of elements preserving $\mathcal{F}$.
It is a closed ind-subgroup of $\mathbf{G}(E)$.
If $\mathcal{F}$ is compatible with $E$, then $\mathbf{P}_\mathcal{F}$ is a splitting parabolic ind-subgroup of $\mathbf{G}(E)$ in the sense that
it is locally parabolic (i.e., there exists an exhaustion of $\mathbf{G}(E)$ by finite-dimensional reductive algebraic subgroups $G_n$ such that the intersections $\mathbf{P}_\mathcal{F}\cap G_n$ are parabolic subgroups of $G_n$) and contains the Cartan ind-subgroup $\mathbf{H}(E)\subset \mathbf{G}(E)$ of elements diagonal in $E$.
Moreover if $\mathcal{F}$ is maximal, then $\mathbf{B}_\mathcal{F}:=\mathbf{P}_\mathcal{F}$ is a splitting Borel ind-subgroup
(i.e., all intersections $\mathbf{B}_\mathcal{F}\cap G_n$ as above are Borel subgroups of $G_n$).

\begin{definition}[\cite{Dimitrov-Penkov}]
Two generalized flags $\mathcal{F},\mathcal{G}$ are called {\em $E$-commensurable}
if $\mathcal{F},\mathcal{G}$ are weakly compatible with $E$, and there is an isomorphism $\phi:\mathcal{F}\to\mathcal{G}$ of ordered sets and a finite dimensional subspace $U\subset\mathbf{V}$ such that
\begin{itemize}
\item[\rm (i)] $\phi(F)+U=F+U$ for all $F\in\mathcal{F}$;
\item[\rm (ii)] $\dim \phi(F)\cap U=\dim F\cap U$ for all $F\in\mathcal{F}$.
\end{itemize}
\end{definition}

$E$-commensurability is an equivalence relation on the set of generalized flags weakly compatible with $E$. In fact, according to the following proposition, each equivalence class consists of a single $\mathbf{G}(E)$-orbit.
If $\mathcal{F}$ is a generalized flag weakly compatible with $E$ we denote by $\mathbf{X}(\mathcal{F},E)$ the set of generalized flags which are $E$-commensurable with $\mathcal{F}$.

\begin{proposition}[\cite{Dimitrov-Penkov}]
\label{P2.3-1}
The set $\mathbf{X}=\mathbf{X}(\mathcal{F},E)$ is endowed with a natural structure of ind-variety.
Moreover $\mathbf{X}$ is $\mathbf{G}(E)$-homogeneous and the map $g\mapsto g\mathcal{F}$ induces an isomorphism of ind-varieties $\mathbf{G}(E)/\mathbf{P}_\mathcal{F}\stackrel{\sim}{\to}\mathbf{X}$.
\end{proposition}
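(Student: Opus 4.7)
The plan is to realize $\mathbf{X} = \mathbf{X}(\mathcal{F},E)$ as an inductive limit of finite-dimensional partial flag varieties, compatibly with the exhaustion $\mathbf{G}(E) = \bigcup_n \mathrm{GL}(V_n)$. First, I would use that $\mathcal{F}$ is weakly compatible with $E$ to choose a basis $\underline{v}$ of $\mathbf{V}$ compatible with $\mathcal{F}$ and differing from $E$ only on finitely many vectors. Since $\mathbf{G}(\underline{v}) = \mathbf{G}(E)$ under this condition, after enlarging the exhaustion to absorb the finite discrepancy I may assume $\mathcal{F} = \mathcal{F}_\sigma(E)$ for some surjection $\sigma : \NN \to (A,\prec)$. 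In particular each $F \in \mathcal{F}$ is spanned by a subset of $E$ and decomposes as $F = (F \cap V_n) \oplus (F \cap V_n^c)$ where $V_n^c := \langle e_\ell : \ell > n\rangle_{\mathbb{C}}$.

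For each $n \geq 1$, the chain $\mathcal{F}_n := \{F \cap V_n : F \in \mathcal{F}\}$ is a finite partial flag of $V_n$, and I would let $X_n$ denote the partial flag variety of $V_n$ of the same combinatorial type. A point $(H_F)_{F \in \mathcal{F}} \in X_n$ is sent to the generalized flag $\mathcal{G} := \{H_F \oplus (F \cap V_n^c) : F \in \mathcal{F}\}$ in $\mathbf{V}$, which is $E$-commensurable with $\mathcal{F}$ via $U = V_n$ in the definition. This defines an injection $X_n \hookrightarrow \mathbf{X}$. Conversely, any $\mathcal{G} \in \mathbf{X}$ is commensurable via some finite-dimensional subspace $U$, and for $n$ large enough $V_n \supset U$, so $\mathcal{G} \in X_n$. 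The natural inclusions $X_n \hookrightarrow X_{n+1}$ are Schubert-type closed embeddings between the relevant partial flag varieties, producing an ind-variety structure on $\mathbf{X} = \bigcup_n X_n$. Transitivity of $\mathrm{GL}(V_n)$ on partial flags of fixed type in $V_n$ immediately yields transitivity of $\mathbf{G}(E)$ on $\mathbf{X}$, and the stabilizer of $\mathcal{F}$ is by definition $\mathbf{P}_\mathcal{F}$. To upgrade the resulting bijection $\mathbf{G}(E)/\mathbf{P}_\mathcal{F} \to \mathbf{X}$ to an isomorphism of ind-varieties, I would check level by level that $\mathrm{GL}(V_n)/(\mathrm{GL}(V_n) \cap \mathbf{P}_\mathcal{F}) \to X_n$ is an isomorphism of algebraic varieties; this reduces to the classical identification of a partial flag variety with the quotient of $\mathrm{GL}(V_n)$ by a parabolic subgroup.

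The main obstacles I anticipate are twofold. First, verifying that the embeddings $X_n \hookrightarrow X_{n+1}$ are genuine closed immersions of algebraic varieties requires unwinding how the combinatorial type of $\mathcal{F}_n$ changes between consecutive levels, since adjoining the basis vector $e_{n+1}$ typically refines several of the previously coincident intersections $F \cap V_n$, so one must check that the prescription $(H_F) \mapsto (H_F \oplus \langle e_{n+1}\rangle \text{ or } H_F)$ (depending on the index of $F$) factors through a standard incidence subvariety of the larger partial flag variety. Second, one must show that the resulting ind-variety structure is independent of the choice of compatible basis $\underline{v}$ and of the exhaustion $(V_n)$; this is handled by passing to a common refinement of any two such choices and checking that the identity map on $\mathbf{X}$ becomes an isomorphism of ind-varieties, exploiting that compatible bases differ from $E$ only on finitely many vectors, so that the comparison between two exhaustions is controlled by finite-dimensional data.
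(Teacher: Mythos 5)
The paper itself offers no proof of this proposition: it is quoted from \cite{Dimitrov-Penkov}, and the construction it recalls in Section \ref{S4.4} (for maximal flags) is exactly the one you propose --- reduce to an $E$-compatible generalized flag, exhaust $\mathbf{X}$ by finite-dimensional (partial) flag varieties $X_n$ attached to $V_n$, embed $X_n$ into $X_{n+1}$ by inserting $e_{n+1}$ at the prescribed position, and identify each $X_n$ with $\mathrm{GL}(V_n)/P_n$ where $P_n=\mathrm{GL}(V_n)\cap\mathbf{P}_\mathcal{F}$ is parabolic. So your route is the standard one and the overall plan is sound.

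One step, however, is false as literally stated, though easily repaired: surjectivity of $\bigcup_n X_n\to\mathbf{X}$ does not follow from choosing $n$ with $V_n\supset U$, where $U$ is a commensurability subspace for $\mathcal{G}$. Membership of $\mathcal{G}$ in the image of your $X_n$ requires each $G=\phi(F)\in\mathcal{G}$ to split as $(G\cap V_n)\oplus(F\cap V_n^{c})$, where $V_n^{c}:=\langle e_\ell:\ell>n\rangle_\mathbb{C}$, whereas commensurability via $V_n$ only gives $G+V_n=F+V_n$ and $\dim G\cap V_n=\dim F\cap V_n$. For instance, take $\mathcal{F}=\{0\subset F\subset\mathbf{V}\}$ with $F=\langle e_2,e_3,e_4,\ldots\rangle_\mathbb{C}$ and $\mathcal{G}=\{0\subset G\subset\mathbf{V}\}$ with $G=\langle e_2,\,e_1+e_3,\,e_4,e_5,\ldots\rangle_\mathbb{C}$: these are $E$-commensurable via $U=V_2$, yet $e_3\notin G$, so $G\neq(G\cap V_2)\oplus(F\cap V_2^{c})$ and $\mathcal{G}$ is not in the image of $X_2$ (it enters only at $X_3$). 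The repair is to use, in addition, that $\mathcal{G}$ is weakly compatible with $E$: choose a basis $\underline{w}$ compatible with $\mathcal{G}$ such that $E\setminus(E\cap\underline{w})$ is finite, and take $n$ large enough that $V_n$ contains $U$, the finitely many vectors of $\underline{w}$ not in $E$, and the finitely many $e_\ell\notin\underline{w}$. Then every $G\in\mathcal{G}$ splits along $V_n\oplus V_n^{c}$, and the condition $G+V_n=F+V_n$ forces $G\cap V_n^{c}=F\cap V_n^{c}$, so $\mathcal{G}$ lies in the image of $X_n$. With this adjustment (and the closed-immersion and basis-independence verifications you already flag as remaining work), your argument goes through and agrees with the cited construction.
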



\begin{proposition}[\cite{Fresse-Penkov}]
\label{P2-3.2}
Let $\sigma:\NN\to (A,\prec)$ and $\tau:\NN\to (B,\prec)$ be maps onto two totally ordered sets.
\begin{itemize}
\item[\rm (a)] Each $E$-compatible generalized flag in $\mathbf{X}(\mathcal{F}_\sigma,E)$ is of the form $\mathcal{F}_{\sigma w}$ for $w\in \mathfrak{S}_\infty$.
Moreover $\mathcal{F}_{\sigma w}=\mathcal{F}_{\sigma w'}\Leftrightarrow
w'w^{-1}\in\mathrm{Stab}_\sigma:=\{v\in\mathfrak{S}_\infty:\sigma v=\sigma\}$.
\item[\rm (b)] Assume that $\mathcal{F}_\tau$ is maximal (i.e., $\tau$ is bijective)
so that $\mathbf{B}_{\mathcal{F}_\tau}$ is a splitting Borel ind-subgroup.
Then each $\mathbf{B}_{\mathcal{F}_\tau}$-orbit of $\mathbf{X}(\mathcal{F}_\sigma,E)$ 
contains a unique element of the form $\mathcal{F}_{\sigma w}$ for $w\in\mathfrak{S}_\infty/\mathrm{Stab}_\sigma$.
\item[\rm (c)] In particular, if $\mathcal{F}_\sigma,\mathcal{F}_\tau$ are both maximal (i.e., $\sigma,\tau$ are both bijective), then
\begin{eqnarray*}
\displaystyle \mathbf{X}(\mathcal{F}_\tau,E)\times\mathbf{X}(\mathcal{F}_\sigma,E)=\bigsqcup_{w\in\mathfrak{S}_\infty}(\pmb{\mathbb{O}}_{\tau,\sigma})_w 
\end{eqnarray*}
where
\begin{eqnarray*}
(\pmb{\mathbb{O}}_{\tau,\sigma})_w:=\{(g\mathcal{F}_\tau,g\mathcal{F}_{\sigma w}):g\in\mathbf{G}(E)\}
\end{eqnarray*}
is a decomposition of $\mathbf{X}(\mathcal{F}_\tau,E)\times\mathbf{X}(\mathcal{F}_\sigma,E)$ into $\mathbf{G}(E)$-orbits.
\end{itemize}
\end{proposition}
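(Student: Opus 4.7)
The plan is to establish the three parts in order: (a) by combinatorial unpacking of the commensurability conditions, (b) by reducing to the classical Bruhat decomposition of Section~\ref{S2.2} along a suitable exhaustion of $\mathbf{V}$, and (c) as a formal consequence of (b) together with Proposition~\ref{P2.3-1}.

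For part (a), suppose $\mathcal{G}\in\mathbf{X}(\mathcal{F}_\sigma,E)$ is $E$-compatible, so that $\mathcal{G}=\mathcal{F}_{\tilde\sigma}(E)$ for some surjection $\tilde\sigma:\NN\to\tilde A$. The order isomorphism $\phi:\mathcal{F}_\sigma\to\mathcal{G}$ coming from $E$-commensurability induces an order isomorphism $A\simeq\tilde A$, so I may identify $\tilde A$ with $A$. Applying conditions (i) and (ii) of commensurability to the subspaces $F''_a\in\mathcal{F}_\sigma$ and their images $\phi(F''_a)\in\mathcal{G}$ forces $\sigma(\ell)=\tilde\sigma(\ell)$ for all but finitely many $\ell$ (otherwise the flags would differ arbitrarily far out, contradicting the finite-dimensionality of $U$), and on the finite exceptional set $I\subset\NN$ the restrictions $\sigma|_I$ and $\tilde\sigma|_I$ take the same values in $A$ with the same multiplicities. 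Choosing a permutation $w\in\mathfrak{S}_\infty$ supported on $I$ with $\sigma\circ w=\tilde\sigma$ then yields $\mathcal{G}=\mathcal{F}_{\sigma w}$. For the stabilizer statement, two $E$-compatible generalized flags coincide as ordered chains of subspaces iff their defining surjections agree pointwise, so $\mathcal{F}_{\sigma w}=\mathcal{F}_{\sigma w'}$ iff $\sigma w=\sigma w'$ iff $w'w^{-1}\in\mathrm{Stab}_\sigma$.

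For part (b), the approach is to show that every $\mathbf{B}_{\mathcal{F}_\tau}$-orbit in $\mathbf{X}(\mathcal{F}_\sigma,E)$ contains an $E$-compatible representative, to which (a) then applies. The splitting property of $\mathbf{B}_{\mathcal{F}_\tau}$ provides an exhaustion $\mathbf{G}(E)=\bigcup_n G_n$ with $G_n=\mathrm{GL}(V_n)$ for a suitable finite-dimensional subspace $V_n\subset\mathbf{V}$, and with each $B_n:=\mathbf{B}_{\mathcal{F}_\tau}\cap G_n$ a Borel subgroup of $G_n$ because $\mathcal{F}_\tau$ is maximal. Given $\mathcal{G}\in\mathbf{X}(\mathcal{F}_\sigma,E)$, weak compatibility with $E$ lets me choose $n$ large enough that $\mathcal{G}$ and $\mathcal{F}_\sigma$ coincide on a fixed tail complementary to $V_n$ and that their traces on $V_n$ are flags of the correct type. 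The classical Bruhat decomposition, applied to the finite-dimensional flag variety containing these traces, produces a unique $w_n$ in $\mathfrak{S}_n$ modulo the Weyl stabilizer of $\sigma|_{\{1,\dots,n\}}$, together with $b\in B_n$ carrying $\mathcal{G}|_{V_n}$ to the trace of $\mathcal{F}_{\sigma w_n}$. Extending $b$ by the identity on the fixed tail gives $b\in\mathbf{B}_{\mathcal{F}_\tau}$ with $b\mathcal{G}=\mathcal{F}_{\sigma w}$ for the corresponding $w\in\mathfrak{S}_\infty$, and uniqueness modulo $\mathrm{Stab}_\sigma$ follows by comparing two such representatives inside a single $V_n$ for $n$ large.

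For part (c), since $\mathbf{G}(E)$ acts transitively on $\mathbf{X}(\mathcal{F}_\tau,E)$ with stabilizer $\mathbf{B}_{\mathcal{F}_\tau}$ (Proposition~\ref{P2.3-1}), the $\mathbf{G}(E)$-orbits on the product $\mathbf{X}(\mathcal{F}_\tau,E)\times\mathbf{X}(\mathcal{F}_\sigma,E)$ correspond bijectively to the $\mathbf{B}_{\mathcal{F}_\tau}$-orbits on $\mathbf{X}(\mathcal{F}_\sigma,E)$, by normalizing the first coordinate to $\mathcal{F}_\tau$. Maximality of $\mathcal{F}_\sigma$ makes $\sigma$ bijective and hence $\mathrm{Stab}_\sigma$ trivial, so (b) parametrizes these orbits by $w\in\mathfrak{S}_\infty$, and direct inspection shows that $(\mathcal{F}_\tau,\mathcal{F}_{\sigma w})$ lies in $(\pmb{\mathbb{O}}_{\tau,\sigma})_w$. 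The main obstacle is the finite-dimensional reduction in (b): one must select the exhaustion $\{V_n\}$ so that it is simultaneously compatible with $\mathcal{F}_\tau$ (making $B_n$ a Borel in $G_n$) and absorbs the discrepancy between $\mathcal{G}$ and $\mathcal{F}_\sigma$ (localized by weak $E$-compatibility), and then verify that the Bruhat representatives $w_n$ stabilize, modulo $\mathrm{Stab}_\sigma$, as $n$ grows so that the limit $w\in\mathfrak{S}_\infty=\varinjlim\mathfrak{S}_n$ is well defined.
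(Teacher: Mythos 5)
This proposition is not proved in the paper at all: it is imported from \cite{Fresse-Penkov}, so there is no internal argument to measure your proposal against. Your strategy is the expected one and is essentially what the cited reference does: unpack $E$-commensurability combinatorially for (a), reduce (b) to the finite-dimensional Bruhat decomposition along the exhaustion $\mathbf{X}=\bigcup_n X_n$ of Section~\ref{S4.4}, and deduce (c) from (b) via the homogeneity statement of Proposition~\ref{P2.3-1} (the correspondence between $\mathbf{G}(E)$-orbits on the product and $\mathbf{B}_{\mathcal{F}_\tau}$-orbits on the second factor, after normalizing the first coordinate to $\mathcal{F}_\tau$). Parts (a) and (c) are correct as sketched; in (a) note that the matching of multiplicities on the exceptional set is exactly where condition {\rm (ii)} of $E$-commensurability enters, and that $\mathcal{F}_{\sigma w}=\mathcal{F}_{\sigma w'}\Leftrightarrow\sigma w=\sigma w'$ uses surjectivity of $\sigma$.

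The one step you should tighten is in (b): you justify the claim that $\mathcal{G}$ and $\mathcal{F}_\sigma$ ``coincide on a fixed tail complementary to $V_n$'' by \emph{weak} compatibility alone, and that is not enough. Weak compatibility (a compatible basis $\underline{v}$ with $E\setminus(E\cap\underline{v})$ finite) says nothing about \emph{where} the tail vectors $e_\ell$ sit inside $\mathcal{G}$; any $E$-compatible flag is weakly compatible with $E$ yet need not agree with $\mathcal{F}_\sigma$ anywhere. What you need is the full $E$-commensurability of $\mathcal{G}$ with $\mathcal{F}_\sigma$: taking $n$ so large that the finite-dimensional space $U$, the vectors of $\underline{v}\setminus E$, and the finitely many $e_\ell\notin\underline{v}$ all lie in $V_n$ (so that $V_n$ is spanned by $\underline{v}\cap V_n$), conditions {\rm (i)}--{\rm (ii)} give $e_\ell\in F\Leftrightarrow e_\ell\in\phi(F)$ for all $\ell>n$ and all $F\in\mathcal{F}_\sigma$, which is the precise tail agreement your Bruhat step requires; alternatively you may simply quote the limit description of $\mathbf{X}$ and formula~(\ref{completed-basis}) from Section~\ref{S4.4}, which go back to \cite{Dimitrov-Penkov}. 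Two smaller points to make explicit: an element of $\mathrm{GL}(V_n)$ preserving the trace of $\mathcal{F}_\tau$ on $V_n$ preserves $\mathcal{F}_\tau$ itself (each member of $\mathcal{F}_\tau$ is spanned by the basis vectors it contains), so your extension by the identity really lands in $\mathbf{B}_{\mathcal{F}_\tau}$; and no stabilization of the representatives $w_n$ in $n$ is actually needed, since both existence and uniqueness can be checked at a single sufficiently large level --- any $b\in\mathbf{B}_{\mathcal{F}_\tau}$ with $b\,\mathcal{F}_{\sigma w}=\mathcal{F}_{\sigma w'}$ lies in some $B_n$ with $w,w'$ supported in $\{1,\dots,n\}$, so finite-dimensional uniqueness applies directly.
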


\begin{remark}
The orbit $(\pmb{\mathbb{O}}_{\tau,\sigma})_w$ 
of Proposition \ref{P2-3.2}\,{\rm (c)}
actually consists of all couples of generalized flags $(\mathcal{F}_\tau(\underline{v}),\mathcal{F}_{\sigma w}(\underline{v}))$
weakly compatible with the basis $\underline{v}=(v_1,v_2,\ldots)$.
\end{remark}

Assume $\mathbf{V}$ is endowed with a nondegenerate symmetric or symplectic form $\omega$ whose values on the basis $E$ are given by the matrix $\Omega$ in (\ref{omega}).

%

\begin{definition}
A generalized flag $\mathcal{F}$ is called {\em $\omega$-isotropic} if 
the map $F\mapsto F^\perp:=\{x\in\mathbf{V}:\omega(x,y)=0\ \forall y\in F\}$ 
is a well-defined involution of $\mathcal{F}$.
\end{definition}

\begin{proposition}[\cite{Dimitrov-Penkov}]
\label{P2.3-3}
Let $\mathcal{F}$ be an $\omega$-isotropic generalized flag weakly compatible with $E$.
The set $\mathbf{X}_\omega(\mathcal{F},E)$ of all
$\omega$-isotropic generalized flags which are $E$-commensurable with $\mathcal{F}$
is a $\mathbf{G}(E,\omega)$-homogeneous, closed ind-subvariety of $\mathbf{X}(\mathcal{F},E)$.
\end{proposition}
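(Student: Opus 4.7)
The plan is to treat the closed ind-subvariety assertion and the transitivity assertion separately, both via a reduction to finite-dimensional statements handled by Lemma \ref{lemma-2.2.1} and the known variety of isotropic flags (\ref{2.2.Xomega}).

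First, I would construct a suitable exhaustion of $\mathbf{V}$. Since $\mathcal{F}$ is weakly compatible with $E$, fix a basis $\underline{v}$ compatible with $\mathcal{F}$ such that $\underline{v}$ and $E$ differ in only finitely many vectors; let $U\subset\mathbf{V}$ be a finite-dimensional subspace containing these vectors. The block structure of $\Omega$ in (\ref{omega}) makes the involution $\ell\mapsto\ell^*$ pair each index with one at an adjacent position, so one can choose an increasing sequence $V_n=\langle e_\ell:\ell\leq m_n\rangle_\mathbb{C}$ of finite-dimensional subspaces, with $m_n$ chosen to respect the blocks $J_k$ and to contain $U$ for $n$ large enough. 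Each such $V_n$ is $\omega$-nondegenerate, and the sets $X_n\subset\mathbf{X}(\mathcal{F},E)$ of generalized flags agreeing with $\mathcal{F}$ outside $V_n$ form a standard exhaustion of $\mathbf{X}(\mathcal{F},E)$ by partial flag varieties of $V_n$.

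Second, for the closed ind-subvariety claim, the intersection $\mathbf{X}_\omega(\mathcal{F},E)\cap X_n$ consists of those flags in $X_n$ whose $V_n$-part is isotropic for the restricted form $\omega|_{V_n}$. Via (\ref{2.2.Xomega}) this is a closed subvariety of $X_n$, as the conditions $F^\perp=F'$ translate into equalities $\dim F\cap F'=\dim F'$ together with the right dimensions of $F^\perp$, all cut out by vanishing of minors of the Gram-type matrix of $\omega$ on the flag. Patching over $n$ yields the closed ind-subvariety structure on $\mathbf{X}_\omega(\mathcal{F},E)$.

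Third, for transitivity, let $\mathcal{G}\in\mathbf{X}_\omega(\mathcal{F},E)$. By Proposition \ref{P2.3-1} there exists $g\in\mathbf{G}(E)$ with $g\mathcal{F}=\mathcal{G}$. Choose $n$ large enough so that $g\in\mathrm{GL}(V_n)$ and both $\mathcal{F},\mathcal{G}\in X_n$. I would then invoke Lemma \ref{lemma-2.2.1}\,(a) inside $V_n$ with the ambient group $H=\mathrm{GL}(V_n)$: condition (\ref{2.2.1}) is trivial in this case, and the lemma produces $h\in\mathrm{GL}(V_n)\cap G(V_n,\omega)$ with $h\mathcal{F}=\mathcal{G}$ as finite-dimensional flags in $V_n$. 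Extending $h$ by the identity on $\langle e_\ell:\ell>m_n\rangle_\mathbb{C}$ gives an element of $\mathbf{G}(E,\omega)$ sending $\mathcal{F}$ to $\mathcal{G}$, proving transitivity. The main subtlety, I expect, is ensuring that the exhaustion $V_n$ is simultaneously $\omega$-nondegenerate and large enough to absorb the finite correction between $E$ and a basis compatible with $\mathcal{F}$; once the block form of $\Omega$ and the finiteness built into weak compatibility are used, both requirements are met and the rest is formal.
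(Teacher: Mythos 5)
The paper itself offers no proof of Proposition \ref{P2.3-3}; it is quoted from \cite{Dimitrov-Penkov}, and the only related ingredients in the text are the exhaustion described in Section \ref{S4.4} and Lemma \ref{lemma-2.2.1}, which the paper uses later for orbit statements (Proposition \ref{P3}), not here. Your reconstruction is correct and is the natural route: choose the exhaustion at block boundaries of $\Omega$, so that $V_n$ is $\omega$-nondegenerate and $V_n^{\perp}=\langle e_\ell:\ell>m_n\rangle_\mathbb{C}$; identify $\mathbf{X}_\omega(\mathcal{F},E)\cap X_n$ with the isotropic flags of the appropriate type in $V_n$, which is closed as in (\ref{2.2.Xomega}); and obtain transitivity of $\mathbf{G}(E,\omega)$ by the Witt-type argument, namely Lemma \ref{lemma-2.2.1}\,(a) with $H=\mathrm{GL}(V_n)$ (where condition (\ref{2.2.1}) is vacuous) followed by extension of the resulting $h$ by the identity on $V_n^{\perp}$. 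Two details should be made explicit to close the argument. First, Lemma \ref{lemma-2.2.1} is stated for complete flags, while the traces of $\mathcal{F}$ and $\mathcal{G}$ on $V_n$ are in general partial flags since $\mathcal{F}$ need not be maximal; the proof of the lemma applies verbatim to any self-perpendicular chain in $V_n$, but this extension should be stated rather than the lemma cited as is. Second, the asserted equality between $\mathbf{X}_\omega(\mathcal{F},E)\cap X_n$ and the isotropic flags of $V_n$ uses more than nondegeneracy of $V_n$: each member of a flag in $X_n$ splits along $\mathbf{V}=V_n\oplus V_n^{\perp}$ with tail taken from $\mathcal{F}$, so perpendiculars are computed blockwise, and one needs the pairing of tails under $F\mapsto F^\perp$ to be the one forced by the $\omega$-isotropy of $\mathcal{F}$; this is exactly where your requirements that $m_n$ respect the blocks $J_k$ and that $V_n$ absorb the finite discrepancy between $E$ and a basis compatible with $\mathcal{F}$ are used, so spell that verification out. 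With these two points added, your argument is complete, and it has the merit of being self-contained within the paper's toolkit instead of deferring to \cite{Dimitrov-Penkov}.
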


Finally, we emphasize that one of the main features of classical ind-groups is that their Borel ind-subgroups are not $\mathrm{Aut}(\mathbf{G})$-conjugate.  Here are three examples of maximal generalized flags in $\mathbf{V}$, compatible with the basis $E$ and such that their stablizers in $\mathbf{G}(E)$ are pairwise not $\mathrm{Aut}(\mathbf{G})$-conjugate.

\begin{example}
{\rm (a)} Let $\sigma_1:\mathbb{N}^*\to(\mathbb{N}^*,<)$, $\ell\mapsto\ell$.
The generalized flag $\mathcal{F}_{\sigma_1}$ is an ascending chain of subspaces
$\mathcal{F}_{\sigma_1}=\{0=F_0\subset F_1\subset F_2\subset\ldots\}$ isomorphic to $(\mathbb{N},<)$ as an ordered set. 
\\
{\rm (b)} Let $\sigma_2:\mathbb{N}^*\to\Big(\{\frac{1}{n}:n\in\mathbb{Z}^*\},<\Big)$, $\ell\mapsto \frac{(-1)^\ell}{\ell}$.
The generalized flag $\mathcal{F}_{\sigma_2}$ is a chain of the form
$\mathcal{F}_{\sigma_2}=\{0=F_0\subset F_1\subset\ldots\subset F_{-2}\subset F_{-1}=\mathbf{V}\}$
and is not isomorphic as ordered set to a subset of $(\mathbb{Z},<)$.
\\
{\rm (c)} Let $\sigma_3:\mathbb{N}^*\to(\mathbb{Q},<)$ be a bijection.
In this case no subspace $F\in\mathcal{F}_{\sigma_3}$ has both immediate successor or immediate predecessor.

\end{example}


\section{Parametrization of orbits in the finite-dimensional case}

\label{S3}

In Sections~\ref{S3.1}-\ref{S3.3}, we state explicit parametrizations of the $K$- and $G^0$-orbits in the finite-dimensional case.  All proofs are given in Section~\ref{S3.5}.

\subsection{Types A1 and A2}
\label{S3.1}
Let the notation be as in Subsection~\ref{S:A1A2}.
The space $V=V_n:=\langle e_1,\ldots,e_n\rangle_\mathbb{C}$ is endowed with the symmetric
or symplectic form $\omega(x,y)={}^tx\cdot \Omega\cdot y$ and the conjugation $\gamma(x)=\Omega\overline{x}$ which actually stand for the restrictions to $V$ of the maps $\omega,\gamma$ introduced in Section \ref{S2.1}.
This allows us to define two involutions of the flag variety $X$:
\[
\mathcal{F}=(F_0,\ldots,F_n)\mapsto\mathcal{F}^\perp:=(F_n^\perp,\ldots,F_0^\perp)
\quad\mbox{and}\quad
\mathcal{F}\mapsto\gamma(\mathcal{F}):=(\gamma(F_0),\ldots,\gamma(F_n))
\]
where $F^\perp\subset V$ stands for the subspace orthogonal to $F$ with respect to $\omega$.

Let $K=\{g\in\mathrm{GL}(V):\mbox{$g$ preserves $\omega$}\}$
and $G^0=\{g\in\mathrm{GL}(V):\gamma g=g\gamma\}$.

By $\mathfrak{I}_n\subset\mathfrak{S}_n$ we denote the
subset of involutions.
If $n=2m$ is even, we let $\mathfrak{I}'_n\subset\mathfrak{I}_n$
be the subset of involutions $w$ without fixed points.

\begin{definition}
\label{DA12}
Let $w\in\mathfrak{I}_n$. Set $\epsilon:=1$ in type A1 and
$\epsilon:=-1$ in type A2. A basis $(v_1,\ldots,v_n)$ of $V$ such that
\[\omega(v_k,v_\ell)=\left\{\begin{array}{ll} 1 & \mbox{if $w_k=\ell\geq k$} \\ \epsilon & \mbox{if $w_k=\ell< k$} \\ 0 & \mbox{if $w_k\not=\ell$} \end{array}\right.
\ \mbox{ for all $k,\ell\in\{1,\ldots,n\}$}\] is said to be {\em
$w$-dual}. A basis $(v_1,\ldots,v_n)$ of $V$ such that
\[
\gamma(v_k)=\left\{\begin{array}{ll} \epsilon v_{w_k} & \mbox{if $w_k\geq k$}
\\ v_{w_k} & \mbox{if $w_k< k$}
\end{array}\right.
\ \mbox{ for all $k\in\{1,\ldots,n\}$}
\]
is said to be {\em $w$-conjugate}. Set
\begin{eqnarray*}
 & \mathcal{O}_w=\{\mathcal{F}(v_1,\ldots,v_n):\mbox{$(v_1,\ldots,v_n)$ is a $w$-dual basis}\}, \\[1mm]
 & \mathfrak{O}_w=\{\mathcal{F}(v_1,\ldots,v_n):\mbox{$(v_1,\ldots,v_n)$ is a $w$-conjugate basis}\}.
\end{eqnarray*}
\end{definition}


\begin{proposition}
\label{P1} Let $\mathfrak{I}_n^\epsilon=\mathfrak{I}_n$ in type A1
and $\mathfrak{I}_n^\epsilon=\mathfrak{I}'_n$ in type A2. Recall the notation
$\mathbb{O}_w$ and $w_0$ introduced in Section \ref{S2.2}.
\begin{itemize}
\item[\rm (a)]
For every $w\in\mathfrak{I}_n^\epsilon$ we have
$\mathcal{O}_w\not=\emptyset$, $\mathfrak{O}_w\not=\emptyset$
and 
\[\mathcal{O}_w\cap\mathfrak{O}_w=\{\mathcal{F}(v_1,\ldots,v_n):\mbox{$(v_1,\ldots,v_n)$ is both $w$-dual and $w$-conjugate}\}\not=\emptyset.\]
\item[\rm (b)]
For every $w\in \mathfrak{I}_n^\epsilon$,
\[
\mathcal{O}_w=\{\mathcal{F}\in
X:(\mathcal{F}^\perp,\mathcal{F})\in\mathbb{O}_{w_0w}\}
\quad\mbox{and}\quad \mathfrak{O}_w=\{\mathcal{F}\in
X:(\gamma(\mathcal{F}),\mathcal{F})\in\mathbb{O}_{w}\}.
\]
\item[\rm (c)]
The subsets $\mathcal{O}_w$ ($w\in\mathfrak{I}_n^\epsilon$) are
exactly the $K$-orbits of $X$. The subsets $\mathfrak{O}_w$
($w\in\mathfrak{I}_n^\epsilon$) are exactly the $G^0$-orbits of $X$.
\item[\rm (d)]
The map $\mathcal{O}_w\mapsto\mathfrak{O}_w$ is Matsuki duality.
\end{itemize}
\end{proposition}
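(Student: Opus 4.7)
The plan is to prove parts (a)--(c) by explicit construction and deduce (d) from Matsuki's characterization of the duality.

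For part (a), I fix $w\in\mathfrak{I}_n^\epsilon$ and decompose $\{1,\ldots,n\}$ into the 2-cycles of $w$ together with its fixed points (the latter only in type A1). The defining relations for $w$-duality and $w$-conjugacy are both block-diagonal with respect to this decomposition, so it suffices to build the basis cycle by cycle. On each 2-cycle $\{k,w_k\}$ with $k<w_k$ I produce a pair $(v_k,v_{w_k})$ that is simultaneously an $\omega$-hyperbolic pair of the prescribed type and satisfies $\gamma(v_k)=\epsilon v_{w_k}$, $\gamma(v_{w_k})=v_k$; on a fixed point in type A1, I produce a nonisotropic $\gamma$-fixed vector. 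The essential ingredient is the compatibility identity
\[
\omega(\gamma x,\gamma y)=\overline{\omega(x,y)},
\]
which follows immediately from the shape of $\Omega$ (using ${}^t\Omega=\pm\Omega$ and $\Omega^2=\pm I$ in the appropriate cases). This identity guarantees an abundance of $\gamma$-stable, $\omega$-nondegenerate 2-planes, and an elementary rescaling inside each such 2-plane aligns the $\omega$- and $\gamma$-data as required.

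For part (b), a direct computation from the $w$-duality of $(v_1,\ldots,v_n)$ gives $F_k^\perp=\langle v_i:w_i>k\rangle$, whence
\[
\mathcal{F}^\perp=\mathcal{F}(v_{w_n},v_{w_{n-1}},\ldots,v_{w_1}).
\]
Comparing with the representative description of $\mathbb{O}_\sigma$ from Section~\ref{S2.2} and using that $w$ is an involution yields $(\mathcal{F}^\perp,\mathcal{F})\in\mathbb{O}_{w_0w}$. For the reverse inclusion, I start from any basis compatible with $\mathcal{F}$ and reduce the Gram matrix $G=(\omega(v_i,v_j))_{i,j}$ by the lower-triangular action $G\mapsto LGL^T$ (which preserves the flag) to a matrix supported on the graph of a permutation $\sigma$; the symmetry $G^T=\epsilon G$ then forces $\sigma^2=\mathrm{id}$ and, in type A2, fixed-point-freeness, while matching of relative positions forces $\sigma=w$. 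Final rescaling of each $v_i$ puts the nonzero entries into the prescribed values $1,\epsilon$. The $\gamma$-analogue for $\mathfrak{O}_w$ is entirely parallel, using $\gamma(F_k)$ in place of $F_k^\perp$ and the $w$-conjugacy relations in place of $\omega$-orthogonality.

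Part (c) is then immediate: the $\mathcal{O}_w$ are $K$-stable (as $K$ preserves $\perp$), pairwise disjoint, and cover $X$ by (b); and each is a single $K$-orbit because the transition matrix between two $w$-dual bases automatically preserves $\omega$. The analogous assertions for the $\mathfrak{O}_w$ follow the same pattern with $\gamma$ in place of $\perp$ and $G^0$ in place of $K$. For (d), part (a) provides a point of $\mathcal{O}_w\cap\mathfrak{O}_w$ for every $w\in\mathfrak{I}_n^\epsilon$; moreover, the transition between two bases that are simultaneously $w$-dual and $w$-conjugate preserves both $\omega$ and $\gamma$, hence lies in $K\cap G^0$, so the intersection is a single $K\cap G^0$-orbit. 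The characterization of Matsuki duality as the unique bijection whose paired orbits have nonempty (single $K\cap G^0$-orbit) intersection then identifies $\mathcal{O}_w\mapsto\mathfrak{O}_w$ with Matsuki's bijection. The main technical hurdle is the joint existence in part (a): aligning the $w$-dual and $w$-conjugate data cycle by cycle, for which the identity $\omega(\gamma x,\gamma y)=\overline{\omega(x,y)}$ is indispensable; once this is achieved, the remaining steps reduce to the indicated linear algebra and the invoked theorem of Matsuki.
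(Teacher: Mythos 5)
Your proposal for part (a) proves only the easy half of that statement: the cycle-by-cycle construction (together with the identity $\omega(\gamma x,\gamma y)=\overline{\omega(x,y)}$) yields \emph{one} basis that is simultaneously $w$-dual and $w$-conjugate, i.e.\ nonemptiness and the inclusion $\supset$. The actual content of (a) is the reverse inclusion: every flag lying in $\mathcal{O}_w\cap\mathfrak{O}_w$ — that is, a flag admitting \emph{some} $w$-dual adapted basis and \emph{some, a priori unrelated,} $w$-conjugate adapted basis — admits a single adapted basis that is both. This is where the paper spends most of its effort (an induction on $\ell$ that modifies a $w$-dual basis of the given flag, with the three cases $w_\ell<\ell$, $w_\ell=\ell$, $w_\ell>\ell$ and a Gram–Schmidt-type correction using positivity of ${}^t\overline{v}\,v$), and nothing in your sketch replaces it. The gap is not cosmetic, because your deduction of (d) needs exactly this: your transition-matrix argument shows that the set of flags of the form $\mathcal{F}(v_1,\ldots,v_n)$ with $(v_1,\ldots,v_n)$ both $w$-dual and $w$-conjugate is a single $K\cap G^0$-orbit, but to invoke Matsuki's characterization you must know that this set is \emph{all} of $\mathcal{O}_w\cap\mathfrak{O}_w$. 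Mere nonemptiness of $\mathcal{O}_w\cap\mathfrak{O}_w$ for every $w$ does not pin down the duality — a given $K$-orbit generally meets several $G^0$-orbits — so without the hard inclusion in (a) your part (d) does not go through.

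Two smaller points. In (b)--(c) your route to exhaustiveness (reduce the Gram matrix of an adapted basis to a monomial matrix by triangular congruence, then read off an involution and rescale) is a legitimate alternative to the paper's appeal to Richardson–Springer, but the reduction itself is the nontrivial step and should be proved or referenced. More importantly, the claim that the $\gamma$-side is ``entirely parallel'' hides a genuinely different problem: there the reduction is of the matrix of the antilinear map $\gamma$ under the twisted action $C\mapsto P^{-1}C\overline{P}$ with $P$ triangular, subject to $C\overline{C}=\epsilon I$, and this real-form statement is not the same linear algebra as congruence of (anti)symmetric forms. The paper deliberately sidesteps it: it classifies only the $K$-orbits directly and then uses the equality of the numbers of $K$- and $G^0$-orbits furnished by Matsuki duality to conclude that the pairwise distinct $G^0$-stable sets $\mathfrak{O}_w$ exhaust the $G^0$-orbits. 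Either supply the twisted reduction in detail or adopt the paper's counting argument.
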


\subsection{Type A3} 
\label{S3.2}
Let the notation be as in Subsection~\ref{S:A3}: the space $V=V_n=\langle e_1,\ldots,e_n\rangle_\mathbb{C}$ is endowed with the hermitian form $\phi(x,y)={}^t\overline{x}\Phi y$ and a conjugation $\delta(x)=\Phi x$ where $\Phi$ is a diagonal matrix with entries $\epsilon_1,\ldots,\epsilon_n\in\{+1,-1\}$ (the left upper $n\times n$-corner of the matrix $\Phi$ of Section \ref{S2.1}).

Set $V_+=\langle e_k:\epsilon_k=1\rangle_\mathbb{C}$ and $V_-=\langle e_k:\epsilon_k=-1\rangle_\mathbb{C}$.  Then $V=V_+\oplus V_-$.
Let $K=\{g\in\mathrm{GL}(V):\delta g=g\delta\}=\mathrm{GL}(V_+)\times\mathrm{GL}(V_-)$ and $G^0=\{g\in \mathrm{GL}(V):\mbox{$g$ preserves $\phi$}\}$.

As in Section \ref{S3.1} we get two involutions of the flag variety $X$:
\[\mathcal{F}=(F_0,\ldots,F_n)\mapsto\delta(\mathcal{F}):=(\delta(F_0),\ldots,\delta(F_n))\quad\mbox{and}\quad\mathcal{F}\mapsto\mathcal{F}^\dag:=(F_n^\dag,\ldots,F_0^\dag)\]
where $F^\dag\subset V$ stands for the orthogonal of $F\subset V$ with respect to $\phi$. 
The hermitian form on the quotient $F/(F\cap F^\dag)$ induced by $\phi$ is
nondegenerate; we denote its signature by $\varsigma(\phi:F)$. Given $\mathcal{F}=(F_0,\ldots,F_n)\in X$, let
\[\varsigma(\phi:\mathcal{F}):=\big(\varsigma(\phi:F_\ell)\big)_{\ell=1}^n\in(\{0,\ldots,n\}^2)^n.\]
Then
\[\varsigma(\delta:\mathcal{F}):=\big((\dim F_\ell\cap V_+,\dim F_\ell\cap V_-)\big)_{\ell=1}^n\in(\{0,\ldots,n\}^2)^n\]
records the relative position of $\mathcal{F}$ with respect to the subspaces
$V_+$ and
$V_-$.

\medskip

\paragraph{\bf Combinatorial notation}
We call a {\em signed involution} a pair $(w,\varepsilon)$ consisting of
an involution $w\in\mathfrak{I}_n$ and signs
$\varepsilon_k\in\{+1,-1\}$ attached to its fixed points
$k\in\{\ell:w_\ell=\ell\}$.
(Equivalently $\varepsilon$ is a map $\{\ell:w_\ell=\ell\}\to\{+1,-1\}$.)

It is convenient to represent $w$ by a graph $l(w)$ (called {\em link pattern}) with $n$ vertices
$1,2,\ldots,n$ and an arc $(k,w_k)$ connecting $k$ and $w_k$ whenever $k<w_k$.
The {\em signed link pattern $l(w,\varepsilon)$} is
obtained from the graph $l(w)$ by marking each vertex
$k\in\{\ell:w_\ell=\ell\}$ with the label $+$ or $-$ depending on
whether $\varepsilon_k=+1$ or $\varepsilon_k=-1$.

For instance, the signed link pattern (where the numbering of vertices is implicit)
\[
\begin{picture}(140,30)(-20,-5)
\courbe{-16}{32}{0}{8}{10}{12}{16} \courbe{0}{80}{0}{36}{40}{44}{24}
\courbe{96}{112}{0}{103}{104}{105}{6} \put(-18,-3){$\bullet$}
\put(-2,-3){$\bullet$} \put(14,-3){$\bullet$} \put(13,-9){$+$}
\put(30,-3){$\bullet$} \put(46,-3){$\bullet$} \put(45,-9){$-$}
\put(62,-3){$\bullet$} \put(61,-9){$+$} \put(78,-3){$\bullet$}
\put(94,-3){$\bullet$} \put(110,-3){$\bullet$}
\end{picture}
\]
represents $(w,\varepsilon)$ with
$w=(1;4)(2;7)(8;9)\in\mathfrak{I}_9$ and
$(\varepsilon_3,\varepsilon_5,\varepsilon_6)=(+1,-1,+1)$.

We define $\varsigma(w,\varepsilon):=\{(p_\ell,q_\ell)\}_{\ell=1}^n$
as the sequence given by
\[
p_\ell\mbox{ (resp., $q_\ell$)}=\parbox[t]{8cm}{(number of $+$ signs
\mbox{ (resp., $-$ signs)} and arcs among the first $\ell$ vertices of
$l(w,\varepsilon)$).}
\]
Assuming $n=p+q$, let $\mathfrak{I}_n(p,q)$ be the set of
signed involutions of signature $(p,q)$, i.e., such that $(p_n,q_n)=(p,q)$.
Note that the elements of $\mathfrak{I}_n(p,q)$ coincide with the clans of signature $(p,q)$
in the sense of \cite{MO,Y}.

For instance, for the above pair
$(w,\varepsilon)$ we have
$(w,\varepsilon)\in\mathfrak{I}_9(5,4)$ and
\[
\varsigma(w,\varepsilon)=\big((0,0),(0,0),(1,0),(2,1),(2,2),(3,2),(4,3),(4,3),(5,4)\big).
\]

\begin{definition}
\label{D2}
\label{DA3}
Given a signed involution
$(w,\varepsilon)$, we say that
a basis $(v_1,\ldots,v_n)$ of $V$ is \emph{$(w,\epsilon)$-conjugate} if
\[
\delta(v_k)=\left\{\begin{array}{ll} \varepsilon_kv_{w_k} & \mbox{if
$w_k=k$}
\\ v_{w_k} & \mbox{if $w_k\not=k$}
\end{array}\right.
\ \mbox{ for all $k\in\{1,\ldots,n\}$\,.}
\]
A basis
$(v_1,\ldots,v_n)$ such that
\[\phi(v_k,v_\ell)=\left\{\begin{array}{ll} \varepsilon_k & \mbox{if $w_k=\ell=k$} \\ 1 & \mbox{if $w_k=\ell\not=k$} \\ 0 & \mbox{if $w_k\not=\ell$} \end{array}\right.
\ \mbox{ for all $k,\ell\in\{1,\ldots,n\}$}\] is said to be {\em
$(w,\varepsilon)$-dual}.
We set
\begin{eqnarray*}
 & \mathcal{O}_{(w,\varepsilon)}=\{\mathcal{F}(v_1,\ldots,v_n):\mbox{$(v_1,\ldots,v_n)$ is a $(w,\varepsilon)$-conjugate basis}\}, \\[1mm]
 & \mathfrak{O}_{(w,\varepsilon)}=\{\mathcal{F}(v_1,\ldots,v_n):\mbox{$(v_1,\ldots,v_n)$ is a $(w,\varepsilon)$-dual basis}\}.
\end{eqnarray*}
\end{definition}

\begin{proposition}
\label{P2} 
In addition to the above notation, let $(p,q)=(\dim V_+,\dim V_-)$. Then:
\begin{itemize}
\item[\rm (a)] For every $(w,\varepsilon)\in\mathfrak{I}_n(p,q)$ the subsets $\mathcal{O}_{(w,\varepsilon)}$ and $\mathfrak{O}_{(w,\varepsilon)}$ are nonempty, and 
\[\mathcal{O}_{(w,\varepsilon)}\cap\mathfrak{O}_{(w,\varepsilon)}
=\{\mathcal{F}(\underline{v}):\mbox{$\underline{v}=(v_k)_{k=1}^n$ is $(w,\varepsilon)$-dual and $(w,\varepsilon)$-conjugate}\}\not=\emptyset.\]
\item[\rm (b)]
For every $(w,\varepsilon)\in \mathfrak{I}_n(p,q)$, 
\begin{eqnarray*}
 & \mathcal{O}_{(w,\varepsilon)}=\big\{\mathcal{F}\in
X: (\delta(\mathcal{F}),\mathcal{F})\in\mathbb{O}_{w}\mbox{ and
}\varsigma(\delta:\mathcal{F})=\varsigma(w,\varepsilon)\big\}, \\
 &\mathfrak{O}_{(w,\varepsilon)}=\big\{\mathcal{F}\in X:
(\mathcal{F}^\dag,\mathcal{F})\in\mathbb{O}_{w_0w}\mbox{ and
}\varsigma(\phi:\mathcal{F})=\varsigma(w,\varepsilon)\big\}.
\end{eqnarray*}
\item[\rm (c)]
The subsets $\mathcal{O}_{(w,\varepsilon)}$
($(w,\varepsilon)\in\mathfrak{I}_n(p,q)$) are exactly the $K$-orbits
of $X$. The subsets $\mathfrak{O}_{(w,\varepsilon)}$
($(w,\varepsilon)\in\mathfrak{I}_n(p,q)$) are exactly the
$G^0$-orbits of $X$.
\item[\rm (d)]
The map
$\mathcal{O}_{(w,\varepsilon)}\mapsto\mathfrak{O}_{(w,\varepsilon)}$
is Matsuki duality.
\end{itemize}
\end{proposition}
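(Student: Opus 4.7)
The plan is to follow the structure of Proposition \ref{P1}, with the signs $\varepsilon_k$ recording the $\delta$-eigenvalues of the basis vectors attached to the fixed points of $w$.

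For part (a), I construct an explicit basis that is simultaneously $(w,\varepsilon)$-conjugate and $(w,\varepsilon)$-dual. For a fixed point $k$ with $\varepsilon_k=+1$ (resp.\ $-1$), pick $v_k\in V_+$ (resp.\ $v_k\in V_-$) with $\phi(v_k,v_k)=\varepsilon_k$. For an arc $(k,\ell)$ with $k<\ell=w_k$, pick $u\in V_+$ and $v\in V_-$ with $\phi(u,u)=1$ and $\phi(v,v)=-1$, and set $v_k=(u+v)/\sqrt{2}$, $v_\ell=(u-v)/\sqrt{2}$. A direct computation gives $\delta(v_k)=v_\ell$, $\phi(v_k,v_\ell)=1$ and $\phi(v_k,v_k)=\phi(v_\ell,v_\ell)=0$, so both conditions in Definition \ref{DA3} hold. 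Since $(w,\varepsilon)\in\mathfrak{I}_n(p,q)$ with $(p,q)=(\dim V_+,\dim V_-)$, the combinatorics matches the available dimensions in $V_+$ and $V_-$, and choosing the data mutually $\phi$-orthogonal across arcs and fixed points yields a genuine basis of $V$.

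For part (b), the inclusion from left to right is routine inspection of bases as in Definition \ref{DA3}. For the reverse inclusion concerning $\mathcal{O}_{(w,\varepsilon)}$, assume $(\delta(\mathcal{F}),\mathcal{F})\in\mathbb{O}_w$ and $\varsigma(\delta:\mathcal{F})=\varsigma(w,\varepsilon)$. The Bruhat parametrization of Section \ref{S2.2} yields a basis $(v_1,\ldots,v_n)$ with $\mathcal{F}=\mathcal{F}(v_1,\ldots,v_n)$ and $\delta(\mathcal{F})=\mathcal{F}(v_{w_1},\ldots,v_{w_n})$, which forces $\delta(v_k)=c_kv_{w_k}+\mbox{(lower terms in the $v_{w_j}$)}$ for some $c_k\in\mathbb{C}^*$. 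For each arc one rescales $v_k$ to make $c_k=1$, and $\delta^2=\mathrm{id}$ then yields $\delta(v_{w_k})=v_k$ automatically; for each fixed point one rescales and inductively adjusts the lower-order terms (again using $\delta^2=\mathrm{id}$) to obtain $\delta(v_k)=\pm v_k$. The sign is then pinned down by the condition $\varsigma(\delta:\mathcal{F})=\varsigma(w,\varepsilon)$: one verifies that, under the normalization, $(p_\ell,q_\ell)$ coincides with $(\dim F_\ell\cap V_+,\dim F_\ell\cap V_-)$, where a completed arc contributes $(1,1)$ and a fixed point of sign $\varepsilon_k$ contributes $(1,0)$ or $(0,1)$. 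The proof for $\mathfrak{O}_{(w,\varepsilon)}$ proceeds analogously with $\phi$ in place of $\delta$; the factor $w_0$ enters because $\phi$-orthogonality reverses the relative position.

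For part (c), $K=\mathrm{GL}(V_+)\times\mathrm{GL}(V_-)$ commutes with $\delta$, so it preserves both $w(\delta(\mathcal{F}),\mathcal{F})$ and $\varsigma(\delta:\mathcal{F})$, and thus each $K$-orbit lies in some $\mathcal{O}_{(w,\varepsilon)}$; conversely, if $(v_k)$ and $(v'_k)$ are two $(w,\varepsilon)$-conjugate bases, the linear map $v_k\mapsto v'_k$ commutes with $\delta$ and so lies in $K$. The analogous argument with $\phi$-duality replacing $\delta$-commutation handles $\mathfrak{O}_{(w,\varepsilon)}$, and covering of $X$ follows because every flag yields well-defined invariants belonging to $\mathfrak{I}_n(p,q)$. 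Finally, part (d) reduces to showing that $\mathcal{O}_{(w,\varepsilon)}\cap\mathfrak{O}_{(w,\varepsilon)}$ is a single $K\cap G^0$-orbit: by (a) this intersection is nonempty, and any two bases lying in it are connected by a linear map that commutes with $\delta$ and preserves $\phi$, hence by an element of $K\cap G^0$; Matsuki's uniqueness theorem then identifies $\mathcal{O}_{(w,\varepsilon)}\mapsto\mathfrak{O}_{(w,\varepsilon)}$ with Matsuki duality. The main obstacle I anticipate lies in the bookkeeping of part (b), namely verifying that the relative-position datum $w$ together with the signature datum $\varsigma(w,\varepsilon)$ determines the $(w,\varepsilon)$-conjugate basis up to $K$-adjustment, and in particular that the signature correctly resolves the sign ambiguity at each fixed point.
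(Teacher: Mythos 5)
The critical gap is in part (a), and it propagates into your part (d). The statement of (a) is an \emph{equality}: every flag in $\mathcal{O}_{(w,\varepsilon)}\cap\mathfrak{O}_{(w,\varepsilon)}$ must be shown to admit a single basis that is simultaneously $(w,\varepsilon)$-dual and $(w,\varepsilon)$-conjugate. You only construct one such basis, which gives nonemptiness and the easy inclusion; a flag in the intersection a priori has a $(w,\varepsilon)$-conjugate basis and a $(w,\varepsilon)$-dual basis which are unrelated, and reconciling them is the hard content of (a). In the paper this is done by an induction on $\ell$: starting from a $(w,\varepsilon)$-dual adapted basis and using the relative-position condition coming from $\mathcal{F}\in\mathcal{O}_{(w,\varepsilon)}$, one corrects the basis in three cases ($w_\ell<\ell$, $w_\ell=\ell$, $w_\ell>\ell$), with the signs at fixed points pinned down by positivity of ${}^t\overline{v}\,v$. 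Without this, your argument for (d) does not start: when you say that ``any two bases lying in it are connected by a linear map that commutes with $\delta$ and preserves $\phi$,'' you are implicitly assuming that every flag in the intersection carries a simultaneously dual-and-conjugate basis, which is exactly the unproved reverse inclusion. Since the identification with Matsuki duality rests on the intersection being a single (compact) $K\cap G^0$-orbit, this is not a cosmetic omission.

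For (b)--(c) you take a genuinely different route from the paper: a direct normalization of $(\delta,\mathcal{F})$ (resp. $(\phi,\mathcal{F})$), in the spirit of Matsuki--Oshima and Yamamoto, rather than the paper's argument, which proves only the easy inclusions, establishes exhaustion of the $K$-orbits via Richardson--Springer (every $K$-orbit contains $g\mathcal{F}_0$ with $\Phi g^{-1}\Phi g$ monomial, which after rescaling produces a $(w,\varepsilon)$-conjugate basis), and then upgrades the inclusions in (b) to equalities by counting, using Matsuki's bijection between $K$- and $G^0$-orbit sets. Your route is viable in principle, but as sketched it contains an incorrect step and leaves the real work undone: if $\delta(v_k)=c_kv_{w_k}+u$ with $u\in\langle v_{w_j}:j<k\rangle$ nonzero, then $\delta^2=\mathrm{id}$ gives $\delta(v_{w_k})=c_k^{-1}\bigl(v_k-\delta(u)\bigr)$, not $v_k$ ``automatically''; and the components of $u$ along $v_{w_j}$ with $w_j>k$ cannot be removed by modifying $v_k$ inside $F_{k-1}$ --- one must modify the later vectors $v_{w_j}$ and verify that these corrections preserve both the flag and the relations already arranged (this is precisely the kind of bookkeeping the paper's Case 3 performs, and on the $\phi$-side an additional positivity argument is needed to fix the signs). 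So either supply this inductive normalization in full, or switch to the paper's Richardson--Springer-plus-counting argument; as written, parts (a), (b) and (d) are not proved.
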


\subsection{Types B, C, D}

\label{S3.3}

In this section we assume that the space $V=V_n=\langle e_1,\ldots,e_n\rangle_\mathbb{C}$ is endowed with a symmetric or symplectic form $\omega$ whose action on the basis $(e_1,\ldots,e_n)$ is described by the matrix $\Omega$
in (\ref{omega}).
We consider the group $G=G(V,\omega)=\{g\in\mathrm{GL}(V):\mbox{$g$ preserves $\omega$}\}$ and the variety of isotropic flags $X_\omega=\{\mathcal{F}\in X:\mathcal{F}^\perp=\mathcal{F}\}$
(see Section \ref{S2.2}).

In addition we assume that $V$ is endowed with a hermitian form $\phi$, a conjugation $\delta$, and a decomposition $V=V_+\oplus V_-$ (as in Section \ref{S3.2}) such that
\begin{itemize}
\item in types BD1 and C2, the restriction of $\omega$ to $V_+$ and $V_-$ is nondegenerate, i.e., $V_+^\perp=V_-$, 
\item in types C1 and D3, $V_+$ and $V_-$ are Lagrangian with respect to $\omega$, i.e., $V_+^\perp=V_+$ and $V_-^\perp=V_-$.
\end{itemize}
Set $K:=\{g\in G:g\delta=\delta g\}$ and $G^0:=\{g\in G:\mbox{$g$ preserves $\phi$}\}$.

\medskip

\paragraph{\bf Combinatorial notation}
Recall that $w_0(k)=n-k+1$. 
Let $(\eta,\epsilon)\in\{1,-1\}^2$.
A signed involution $(w,\varepsilon)$ is called {\em $(\eta,\epsilon)$-symmetric} if the following conditions hold
\begin{itemize}
\item[(i)] $ww_0=w_0w$ (so that the set $\{\ell:w_\ell=\ell\}$ is $w_0$-stable);
\item[(ii)] $\varepsilon_{w_0(k)}=\eta \varepsilon_k$ for all $k\in\{\ell:w_\ell=\ell\}$;
\end{itemize}
and in the case where $\eta\not=\epsilon$:
\begin{itemize}
\item[(iii)]$w_k\not=w_0(k)$ for all $k$.
\end{itemize}
Assuming $n=p+q$, let $\mathfrak{I}_n^{\eta,\epsilon}(p,q)\subset\mathfrak{I}_n(p,q)$ denote the subset of signed involutions of signature $(p,q)$ which are $(\eta,\epsilon)$-symmetric.

Specifically, $(w,\varepsilon)$ is $(1,1)$-symmetric when the signed link pattern $l(w,\varepsilon)$ is symmetric;
$(w,\varepsilon)$ is $(1,-1)$-symmetric when  $l(w,\varepsilon)$ is symmetric and does not have symmetric arcs (i.e., joining $k$ and $n-k+1$);
$(w,\varepsilon)$ is $(-1,-1)$-symmetric
when  $l(w,\varepsilon)$ is antisymmetric
in the sense that the mirror image of $l(w,\varepsilon)$ is a signed link pattern with the same arcs but opposite signs; $(w,\varepsilon)$ is $(-1,1)$-symmetric when $l(w,\varepsilon)$ is antisymmetric and does not have symmetric arcs. For instance:
\begin{eqnarray*}
 & \begin{array}[t]{c}
\begin{picture}(140,30)(-20,-5)
\courbe{-16}{32}{0}{8}{10}{12}{16} \courbe{0}{96}{0}{40}{48}{56}{26}
\courbe{64}{112}{0}{86}{88}{90}{16} \put(-18,-3){$\bullet$}
\put(-2,-3){$\bullet$} \put(14,-3){$\bullet$} \put(13,-9){$+$}
\put(30,-3){$\bullet$} \put(46,-3){$\bullet$} \put(45,-9){$-$}
\put(62,-3){$\bullet$} \put(77,-9){$+$} \put(78,-3){$\bullet$}
\put(94,-3){$\bullet$} \put(110,-3){$\bullet$}
\end{picture}
\\[1mm]
\mbox{$(w,\varepsilon)\in\mathfrak{I}^{1,1}_9(5,4),$}
\end{array}
\qquad
\begin{array}[t]{c}
\begin{picture}(140,30)(-20,-5)
\courbe{-16}{32}{0}{8}{10}{12}{16} \courbe{0}{112}{0}{48}{56}{64}{28}
\courbe{80}{128}{0}{102}{104}{106}{16} \put(-18,-3){$\bullet$}
\put(-2,-3){$\bullet$} \put(14,-3){$\bullet$} \put(13,-9){$+$}
\put(30,-3){$\bullet$} \put(46,-3){$\bullet$} \put(45,-9){$-$}
\put(62,-3){$\bullet$} \put(61,-9){$+$} \put(78,-3){$\bullet$}
\put(94,-3){$\bullet$} \put(93,-9){$-$} \put(110,-3){$\bullet$} \put(126,-3){$\bullet$}
\end{picture}
\\[1mm]
\mbox{$(w,\varepsilon)\in\mathfrak{I}^{-1,-1}_{10}(5,5)$,}
\end{array} \\
 & \begin{array}[t]{c}
\begin{picture}(140,30)(-20,-5)
\courbe{-16}{32}{0}{8}{10}{12}{16}
\courbe{80}{128}{0}{102}{104}{106}{16} \put(-18,-3){$\bullet$}
\put(-2,-3){$\bullet$} \put(-3,-9){$-$} \put(14,-3){$\bullet$} \put(13,-9){$+$}
\put(30,-3){$\bullet$} \put(46,-3){$\bullet$} \put(45,-9){$+$}
\put(62,-3){$\bullet$} \put(61,-9){$+$} \put(78,-3){$\bullet$}
\put(94,-3){$\bullet$} \put(93,-9){$+$} \put(110,-3){$\bullet$} \put(109,-9){$-$} \put(126,-3){$\bullet$}
\end{picture}\\[1mm]
\mbox{$(w,\varepsilon)\in\mathfrak{I}^{1,-1}_{10}(6,4),$}
\end{array}
\qquad
\begin{array}[t]{c}
\begin{picture}(140,30)(-20,-5)
\courbe{-16}{32}{0}{8}{10}{12}{16}
\courbe{80}{128}{0}{102}{104}{106}{16} \put(-18,-3){$\bullet$}
\put(-2,-3){$\bullet$} \put(-3,-9){$-$} \put(14,-3){$\bullet$} \put(13,-9){$+$}
\put(30,-3){$\bullet$} \put(46,-3){$\bullet$} \put(45,-9){$-$}
\put(62,-3){$\bullet$} \put(61,-9){$+$} \put(78,-3){$\bullet$}
\put(94,-3){$\bullet$} \put(93,-9){$-$} \put(110,-3){$\bullet$} \put(109,-9){$+$} \put(126,-3){$\bullet$}
\end{picture}
\\[1mm]
\mbox{$(w,\varepsilon)\in\mathfrak{I}^{-1,1}_{10}(5,5)$.}
\end{array}
\end{eqnarray*}

\begin{proposition}
\label{P3}
Let $(p,q)=(\dim V_+,\dim V_-)$ (so that $p=q=\frac{n}{2}$ in types C1 and D3).
Set 
$(\eta,\epsilon)=(1,1)$ in type BD1,
$(\eta,\epsilon)=(1,-1)$ in type C2,
$(\eta,\epsilon)=(-1,-1)$ in types C1, and 
$(\eta,\epsilon)=(-1,1)$ in type D3.
\begin{itemize}
\item[(a)] For every $(w,\varepsilon)\in\mathfrak{I}_n^{\eta,\epsilon}(p,q)$,
considering bases $\underline{v}=(v_1,\ldots,v_n)$ of $V$ such that
\begin{equation}
\label{3.3.18}
\omega(v_k,v_\ell)=\left\{\begin{array}{ll} 0 & \mbox{if $\ell\not=n-k+1$} 
\\ 
1 & \mbox{if $\ell=n-k+1$ and $w_k,w_\ell\in[k,\,\ell]$ ($k\leq\ell$)} \\ 
\epsilon & \mbox{if $\ell=n-k+1$ and $w_k,w_\ell\in[\ell,\,k]$ ($\ell\leq k$)} \\
\eta & \mbox{if $\ell=n-k+1$ and $k,\ell\in]w_k,\,w_\ell[$} \\
\eta\epsilon & \mbox{if $\ell=n-k+1$ and $k,\ell\in]w_\ell,\,w_k[$,} \\
\end{array}\right.
\end{equation}
we have
\begin{eqnarray*}
 & & \mathcal{O}^{\eta,\epsilon}_{(w,\varepsilon)}:=\mathcal{O}_{(w,\varepsilon)}\cap X_\omega=\{\mathcal{F}(\underline{v}):\underline{v}\mbox{ is $(w,\varepsilon)$-conjugate and satisfies (\ref{3.3.18})}\}\not=\emptyset, \\
 & & \mathfrak{O}^{\eta,\epsilon}_{(w,\varepsilon)}:=\mathfrak{O}_{(w,\varepsilon)}\cap X_\omega=\{\mathcal{F}(\underline{v}):\underline{v}\mbox{ is $(w,\varepsilon)$-dual and satisfies (\ref{3.3.18})}\}\not=\emptyset, \\
 & & \mathcal{O}_{(w,\varepsilon)}^{\eta,\epsilon}\cap\mathfrak{O}_{(w,\varepsilon)}^{\eta,\epsilon}
 \\
 &&\phantom{aaa}=\{\mathcal{F}(\underline{v}):\underline{v}\mbox{ is $(w,\varepsilon)$-conjugate and $(w,\varepsilon)$-dual and satisfies (\ref{3.3.18})}\}\not=\emptyset.
\end{eqnarray*}
\item[(b)] The subsets $\mathcal{O}_{(w,\varepsilon)}^{\eta,\epsilon}$ ($(w,\varepsilon)\in\mathfrak{I}_n^{\eta,\epsilon}(p,q)$) are exactly the $K$-orbits of $X_\omega$. 
The subsets $\mathfrak{O}_{(w,\varepsilon)}^{\eta,\epsilon}$ ($(w,\varepsilon)\in\mathfrak{I}_n^{\eta,\epsilon}(p,q)$) are exactly the $G^0$-orbits of $X_\omega$.
\item[(c)] The map $\mathcal{O}_{(w,\varepsilon)}^{\eta,\epsilon}\mapsto \mathfrak{O}_{(w,\varepsilon)}^{\eta,\epsilon}$ is Matsuki duality.
\end{itemize}
\end{proposition}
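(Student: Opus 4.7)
The plan is to deduce Proposition~\ref{P3} from Proposition~\ref{P2} by restricting to the isotropic flag variety $X_\omega\subset X$. Let $\widetilde{K}=\mathrm{GL}(V_+)\times\mathrm{GL}(V_-)$ and $\widetilde{G}^0=\{g\in\mathrm{GL}(V):\phi(gx,gy)=\phi(x,y)\}$ denote the type A3 symmetric subgroup and real form of $\mathrm{GL}(V)$, so that $K=\widetilde{K}\cap G(V,\omega)$ and $G^0=\widetilde{G}^0\cap G(V,\omega)$. By Proposition~\ref{P2}, the $\widetilde{K}$- and $\widetilde{G}^0$-orbits on $X$ are the $\mathcal{O}_{(w,\varepsilon)}$ and $\mathfrak{O}_{(w,\varepsilon)}$ for $(w,\varepsilon)\in\mathfrak{I}_n(p,q)$, and Matsuki duality on $X$ pairs them.

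I would first reduce the $K$- and $G^0$-orbits on $X_\omega$ to intersections of the ambient $\widetilde{K}$- and $\widetilde{G}^0$-orbits with $X_\omega$. For $\widetilde{K}$, hypothesis~(\ref{2.2.1}) follows from Lemma~\ref{lemma-2.2.1}(b) since $\widetilde{K}$ stabilises $V_+\oplus V_-$ and by assumption $(V_+^\perp,V_-^\perp)$ equals $(V_-,V_+)$ in types BD1, C2 and $(V_+,V_-)$ in types C1, D3. For $\widetilde{G}^0$, condition~(\ref{2.2.1}) would be checked directly: writing $g^*=\Omega^{-1}{}^tg\,\Omega$ and using the commutation relation between $\Phi$ (the matrix of $\delta$) and $\Omega$ forced by the type ($\Phi\Omega=\Omega\Phi$ in types BD1, C2 and $\Phi\Omega=-\Omega\Phi$ in types C1, D3) one shows in each case that $g^*\in\widetilde{G}^0$, hence $g^*g\in\widetilde{G}^0$ and any polynomial in $g^*g$ lies in $\widetilde{G}^0$. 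Lemma~\ref{lemma-2.2.1}(a) then identifies the $K$- and $G^0$-orbits on $X_\omega$ with the nonempty intersections $\mathcal{O}_{(w,\varepsilon)}\cap X_\omega$ and $\mathfrak{O}_{(w,\varepsilon)}\cap X_\omega$ respectively.

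Next I would identify the parameter set of nonempty intersections as $\mathfrak{I}_n^{\eta,\epsilon}(p,q)$. If $\mathcal{F}\in\mathcal{O}_{(w,\varepsilon)}\cap X_\omega$, then $\mathcal{F}^\perp=\mathcal{F}$; applying $\perp$ to the relative-position characterization in Proposition~\ref{P2}(b) and using the identity $w(\mathcal{F}^\perp,\mathcal{G}^\perp)=w_0w(\mathcal{F},\mathcal{G})w_0$ forces $w_0w=ww_0$, which is condition (i) in the definition of $(\eta,\epsilon)$-symmetry. The sign relation $\varepsilon_{w_0(k)}=\eta\varepsilon_k$ for fixed points $k$ is obtained by comparing the $\delta$-induced signature on the line $F_k/F_{k-1}$ with that on the dually paired line $F_{n-k+1}^\perp/F_{n-k}^\perp$; the sign $\eta$ is dictated by whether $V_+^\perp=V_-$ or $V_+^\perp=V_+$. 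Finally, when $\eta\ne\epsilon$ (types C2 and D3) a symmetric arc $w_k=w_0(k)$ would force the restriction of $\omega$ to the corresponding two-dimensional block to have the wrong symmetry type, so condition (iii) is necessary. The same analysis with $\mathfrak{O}_{(w,\varepsilon)}$, applied to the hermitian form $\phi$ rather than to $\delta$, yields the same parameter set.

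The main obstacle, and the heart of the argument, is to prove nonemptiness of $\mathcal{O}^{\eta,\epsilon}_{(w,\varepsilon)}\cap\mathfrak{O}^{\eta,\epsilon}_{(w,\varepsilon)}$ for every $(w,\varepsilon)\in\mathfrak{I}_n^{\eta,\epsilon}(p,q)$ by exhibiting a basis $\underline{v}$ that is simultaneously $(w,\varepsilon)$-conjugate, $(w,\varepsilon)$-dual, and satisfies~(\ref{3.3.18}). I would construct $\underline{v}$ block-by-block, decomposing $\{1,\ldots,n\}$ into the orbits of the joint action of $w$ and $w_0$: each $w_0$-symmetric pair of arcs contributes a four-dimensional block, each $w_0$-fixed arc (occurring only when $\eta=\epsilon$) a two-dimensional block, and each $w_0$-linked pair of signed fixed points another two-dimensional block. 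Within each block the three compatibility relations reduce to an elementary linear system whose solvability is ensured precisely by conditions (ii) and (iii) and by the choice of $(\eta,\epsilon)$; the blocks are taken pairwise $\omega$-orthogonal so that they assemble to a global basis. Once (a) is established, parts (b) and (c) follow immediately from Proposition~\ref{P2}(c,d) and the reduction via Lemma~\ref{lemma-2.2.1}: the intersection $\mathcal{O}_{(w,\varepsilon)}\cap X_\omega$ is a single $K$-orbit and $\mathfrak{O}_{(w,\varepsilon)}\cap X_\omega$ a single $G^0$-orbit, and the ambient Matsuki duality restricts to the desired bijection on $X_\omega$.
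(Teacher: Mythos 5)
Your reduction of the $K$-side is exactly the paper's route: Lemma \ref{lemma-2.2.1}(b) applies to $\widetilde{K}=\mathrm{GL}(V_+)\times\mathrm{GL}(V_-)$ because it preserves the decomposition $V=V_+\oplus V_-$ with $(V_+^\perp,V_-^\perp)=(V_-,V_+)$ or $(V_+,V_-)$, and your derivation of the symmetry conditions on $(w,\varepsilon)$ and the block-by-block construction of a basis that is simultaneously $(w,\varepsilon)$-conjugate, $(w,\varepsilon)$-dual and satisfies (\ref{3.3.18}) are in substance the paper's Claims 1 and 2. The genuine gap is your treatment of the $G^0$-side. Condition (\ref{2.2.1}) simply fails for $H=\widetilde{G}^0=\mathrm{U}(V,\phi)$: a real form is not stable under arbitrary complex polynomials of its elements. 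Concretely, taking $g=\mathrm{id}\in\widetilde{G}^0$ gives $g^*g=\mathrm{id}$ and $\mathbb{C}[\mathrm{id}]\cap\mathrm{GL}(V)=\mathbb{C}^*\,\mathrm{id}\not\subset \mathrm{U}(V,\phi)$, so your assertion that ``any polynomial in $g^*g$ lies in $\widetilde{G}^0$'' is false, and Lemma \ref{lemma-2.2.1}(a) cannot be invoked (its proof needs the square root $h=P(g^*g)$ to lie in $H$, which is precisely what is unavailable here, since $g^*$ is the $\omega$-adjoint and $g^*g$ is not positive with respect to $\phi$ in any useful sense). Thus your argument does not establish the key fact that each nonempty intersection $\mathfrak{O}_{(w,\varepsilon)}\cap X_\omega$ is a single $G^0$-orbit. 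The paper avoids this entirely: it uses Lemma \ref{lemma-2.2.1} only for $K$, and for $G^0$ it observes that the sets $\mathfrak{O}_{(w,\varepsilon)}\cap X_\omega$ are $G^0$-stable and pairwise disjoint, that those indexed by $\mathfrak{I}_n^{\eta,\epsilon}(p,q)$ are nonempty (Claim 2), and that finite-dimensional Matsuki duality for $(K,G^0)$ on $X_\omega$ forces the number of $G^0$-orbits to equal the number of $K$-orbits; the count then forces each nonempty intersection to be exactly one $G^0$-orbit and the remaining ones to be empty.

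A secondary, smaller gap: part (c) is not a literal ``restriction'' of the ambient duality on $X$ to $X_\omega$. The paper proves it by noting that $\mathcal{O}_{(w,\varepsilon)}\cap\mathfrak{O}_{(w,\varepsilon)}$ is compact (being an intersection of Matsuki-dual orbits in $X$), hence so is its closed subset $\mathcal{O}^{\eta,\epsilon}_{(w,\varepsilon)}\cap\mathfrak{O}^{\eta,\epsilon}_{(w,\varepsilon)}$, which is nonempty by Claim 2; the Gindikin--Matsuki criterion (nonempty compact intersection) then identifies the two orbits as dual in $X_\omega$, and only afterwards does one conclude that this intersection is a single $K\cap G^0$-orbit, giving the equalities in part (a). You would need some version of this argument in place of ``the ambient Matsuki duality restricts.''
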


\subsection{Remarks}

Set $X_0:=X$ in type A and $X_0:=X_\omega$ in types B, C, D.

\begin{remark}
\label{R3.4.1}
The characterization of the $K$-orbits in Propositions \ref{P1}--\ref{P3} can be stated in the following unified way. For $\mathcal{F}\in X$ we write
$\sigma(\mathcal{F})=\mathcal{F}^\perp$ in types A1--A2 and
$\sigma(\mathcal{F})=\delta(\mathcal{F})$ in types A3, BD1, C1--C2, D3. Let $P\subset
G$ be a parabolic subgroup containing $K$ and which is minimal for
this property. Two flags $\mathcal{F}_1,\mathcal{F}_2\in X_0$ belong
to the same $K$-orbit if and only if
$(\sigma(\mathcal{F}_1),\mathcal{F}_1)$ and
$(\sigma(\mathcal{F}_2),\mathcal{F}_2)$ belong to the same orbit of $P$ for the diagonal action of $P$ on $X_0\times X_0$.
\end{remark}

\begin{remark}[Open $K$-orbits]
\label{R-open}
With the notation of Remark \ref{R3.4.1} the map $\sigma_0:X_0\to X\times X$, $\mathcal{F}\mapsto(\sigma(\mathcal{F}),\mathcal{F})$ is a closed embedding.

In types A and C the flag variety $X_0$ is irreducible.
In particular there is a unique $G$-orbit $\mathbb{O}_w\subset X\times X$
such that $\mathbb{O}_w\cap \sigma_0(X_0)$ is open in $\sigma_0(X_0)$;
it corresponds to an element $w\in\mathfrak{S}_n$ maximal for the Bruhat order such that $\mathbb{O}_w$ intersects $\sigma_0(X_0)$.
In each case one finds a unique $K$-orbit $\mathcal{O}\subset X_0$ such that $\sigma_0(\mathcal{O})\subset\mathbb{O}_w$, it is therefore the (unique) open $K$-orbit of $X_0$.
This yields the following list of open $K$-orbits in types A1--A3, C1--C2:
\begin{itemize}
\item[\rm A1:] $\mathcal{O}_\mathrm{id}$;
\item[\rm A2:] $\mathcal{O}_{v_0}$ where $v_0=(1;2)(3;4)\cdots(n-1;n)$;
\item[\rm A3:] $\mathcal{O}_{(w_0^{(t)},\varepsilon)}$ where 
$t=\min\{p,q\}$, $\varepsilon\equiv\mathrm{sign}(p-q)$,
and $w^{(t)}_0=\prod\limits_{k=1}^t(k;n-k+1)$;
\item[\rm C1:] $\mathcal{O}^{-1,-1}_{(w_0,\emptyset)}$;
\item[\rm C2:] $\mathcal{O}^{1,-1}_{(\hat{w}_0^{(t)},\varepsilon)}$
where $t=\min\{p,q\}$, $\varepsilon\equiv\mathrm{sign}(p-q)$, and $\hat{w}_0^{(t)}=v_0^{(t)}w_0^{(t)}v_0^{(t)}$,
where $v_0^{(t)}=(1;2)(3;4)\cdots(t-1;t)$.
\end{itemize}

If $n=\dim V$ is even and the form $\omega$ is orthogonal, then the variety $X_\omega$ has two connected components.
In fact, for every isotropic flag $\mathcal{F}=(F_k)_{k=0}^n\in X_\omega$ there is a unique $\tilde{\mathcal{F}}=(\tilde{F}_k)_{k=0}^n\in X_\omega$
such that $F_k=\tilde{F}_k$ for all $k\not=m:=\frac{n}{2}$, $\tilde{F}_m\not=F_m$.
Then the map $\tilde{I}:\mathcal{F}\mapsto\tilde{\mathcal{F}}$ is an automorphism
of $X_\omega$ which maps one component of $X_\omega$ onto the other.
If $\mathcal{F}=\mathcal{F}(v_1,\ldots,v_n)$ for a basis $\underline{v}=(v_1,\ldots,v_n)$ such that 
\[\omega(v_k,v_\ell)\not=0\Leftrightarrow \ell=n-k+1\]
then
$\tilde{I}(\mathcal{F}(\underline{v}))=\mathcal{F}(\underline{\tilde{v}})$
where $\underline{\tilde{v}}$ is the basis obtained from $\underline{v}$ by switching the two middle vectors $v_m,v_{m+1}$.
If $\underline{v}$ is $(w,\varepsilon)$-conjugate then $\underline{\tilde{v}}$ is $\tilde{i}(w,\varepsilon)$-conjugate where
$\tilde{i}(w,\varepsilon):=\big((m;m+1)w(m;m+1),\varepsilon\circ(m;m+1)\big)$.
Hence $\tilde{I}$ maps the $K$-orbit $\mathcal{O}^{\eta,\epsilon}_{(w,\varepsilon)}$
onto $\mathcal{O}^{\eta,\epsilon}_{\tilde{i}(w,\varepsilon)}$.

In type D3, 
$X_\omega$ has exactly two open $K$-orbits.
More precisely $w=\hat{w}_0:=w_0v_0$
is maximal for the Bruhat order such that $\mathbb{O}_w\cap\sigma_0(X_0)$ is nonempty,
hence $\sigma_0^{-1}(\mathbb{O}_{\hat{w}_0})$ is open.
The permutation $\hat{w}_0$ has no fixed point if $m:=\frac{n}{2}$ is even; if $m:=\frac{n}{2}$ is odd, $\hat{w}_0$ fixes $m$ and $m+1$.
In the former case
$\sigma_0^{-1}(\mathbb{O}_{\hat{w}_0})=\mathcal{O}^{-1,1}_{(\hat{w}_0,\emptyset)}$
is a single $K$-orbit, and
$\tilde{I}(\mathcal{O}^{-1,1}_{(\hat{w}_0,\emptyset)})=\mathcal{O}^{-1,1}_{\tilde{i}(\hat{w}_0,\emptyset)}$
is a second open $K$-orbit.
In the latter case
$\sigma_0^{-1}(\mathbb{O}_{\hat{w}_0^{(m-1)}})=\mathcal{O}^{-1,1}_{(\hat{w}_0,\varepsilon)}\cup\mathcal{O}^{-1,1}_{(\hat{w}_0,\tilde\varepsilon)}$, where $(\varepsilon_m,\varepsilon_{m+1})=(\tilde\varepsilon_{m+1},\tilde\varepsilon_m)=(+1,-1)$, is the union of two distinct open $K$-orbits
which are image of each other by $\tilde{I}$.

In type BD1 the variety $X_\omega$ may be reducible
but $w=w^{(t)}_0$, for $t:=\min\{p,q\}$, is the unique maximal element of $\mathfrak{S}_n$ 
such that $\mathbb{O}_w\cap\sigma_0(X_0)$ is nonempty.
Then $\sigma_0^{-1}(\mathbb{O}_w)$
consists of a single $\tilde{I}$-stable open $K$-orbit, namely
$\mathcal{O}^{1,1}_{(w^{(t)}_0,\varepsilon)}$ for $\varepsilon\equiv\mathrm{sign}(p-q)$. 
The flag variety $X_\omega$ has therefore a unique open $K$-orbit (which is not connected whenever $n$ is even).
\end{remark}

\begin{remark}[Closed $K$-orbits]
\label{R-closed}
We use the notation of Remarks \ref{R3.4.1}--\ref{R-open}.
As seen from Propositions \ref{P1}--\ref{P3}, in each case
one finds a unique $w_{\mathrm{min}}\in\mathfrak{S}_n$ such that $\mathbb{O}_{w_{\mathrm{min}}}\cap\sigma_0(X_0)$ is closed; 
actually $w_{\mathrm{min}}=\mathrm{id}$ except in type BD1 for $p,q$ odd:
in that case $w_{\mathrm{min}}=(\frac{n}{2};\frac{n}{2}+1)$.
For every $K$-orbit $\mathcal{O}\subset X_0$ the following equivalence holds:
\[
\mbox{$\mathcal{O}$ is closed}\quad \Leftrightarrow\quad \sigma_0(\mathcal{O})\subset\mathbb{O}_{w_{\mathrm{min}}}
\]
(see \cite{Brion,Richardson-Springer}). In view of this equivalence, we deduce the following list of closed $K$-orbits of $X_0$ for the different types.
In types A1 and A2, $\mathcal{O}_{w_0}$ is the unique closed $K$-orbit.
In type A3 the closed $K$-orbits are exactly the orbits
$\mathcal{O}_{(\mathrm{id},\varepsilon)}$ for all pairs of the form $(\mathrm{id},\varepsilon)\in\mathfrak{I}_n(p,q)$;
there are $\binom{n}{p}$ such orbits.
In types B, C, D, the closed $K$-orbits are the orbits
$\mathcal{O}^{\eta,\epsilon}_{(\mathrm{id},\varepsilon)}$ for all pairs of the form $(\mathrm{id},\varepsilon)\in\mathfrak{I}^{\eta,\epsilon}_n(p,q)$,
except in type BD1 in the case where $n=:2m$ is even and $p,q$ are odd;
in that case the closed $K$-orbits
are the orbits $\mathcal{O}^{1,1}_{((m;m+1),\varepsilon)}$
for all pairs of the form $((m;m+1),\varepsilon)\in\mathfrak{I}^{1,1}_n(p,q)$.
There are $\binom{\lfloor\frac{p}{2}\rfloor+\lfloor\frac{q}{2}\rfloor}{\lfloor\frac{p}{2}\rfloor}$ closed orbits in types BD1 and C2, and there are $2^{\frac{n}{2}}$ closed orbits in types C1 and D3.
\end{remark}

\begin{remark} Propositions
\ref{P1}--\ref{P3} show in particular that the {\em special elements
of $X_0$}, in the sense of Matsuki \cite{Matsuki, Matsuki3}, are
precisely the flags $\mathcal{F}\in X_0$ of the form
$\mathcal{F}=\mathcal{F}(v_1,\ldots,v_n)$ where $(v_1,\ldots,v_n)$
is a basis of $V$ which is both dual and conjugate, with
respect to some involution $w\in\mathfrak{I}_n^\epsilon$ in types A1
and A2, and to some signed involution
$(w,\varepsilon)\in\mathfrak{I}_n(p,q)$ in types A3, B--D. Indeed, in view of \cite{Matsuki, Matsuki3} the set
$\mathcal{S}\subset X_0$ of special elements equals
\[\bigcup_{\mathcal{O}\in X_0/K}\mathcal{O}\cap\Xi(\mathcal{O})\]
where the map $X_0/K\to X_0/G^0$, $\mathcal{O}\mapsto\Xi(\mathcal{O})$
stands for Matsuki duality.
\end{remark}

\subsection{Proofs}

\label{S3.5}

\begin{proof}[Proof of Proposition \ref{P1}\,{\rm (a)}]
We write $w=(a_1;b_1)\cdots(a_m;b_m)$ with $a_1<\ldots<a_m$ and
$a_k<b_k$ for all $k$; let $c_1<\ldots<c_{n-2m}$ be the elements of
the set $\{k:w_k=k\}$.
In type A2 we have $n=2m$, and $(e_1,\ldots,e_n)$ is both a
$(1;2)(3;4)\cdots(n-1;n)$-dual basis and a
$(1;2)(3;4)\cdots(n-1;n)$-conjugate basis; then the basis
$\left\{e'_1,\ldots,e'_n\right\}$ given by
\[e'_{a_\ell}=e_{2\ell-1}\quad\mbox{and}\quad e'_{b_\ell}=e_{2\ell}\quad\mbox{for all $\ell\in\{1,\ldots,m\}$}\]
is simultaneously $w$-dual and $w$-conjugate.
In type A1, 
up to replacing $e_\ell$ and $e_{\ell^*}$ by $\frac{e_\ell+e_{\ell^*}}{\sqrt{2}}$ and $\frac{e_\ell-e_{\ell^*}}{i\sqrt{2}}$ whenever $\ell<\ell^*$,
we may assume that
the basis $(e_1,\ldots,e_n)$ is both $\mathrm{id}$-dual
and $\mathrm{id}$-conjugate. For every $\ell\in\{1,\ldots,m\}$ and
$k\in\{1,\ldots,n-2m\}$, we set
\[e'_{a_\ell}=\frac{e_{2\ell-1}+ie_{2\ell}}{\sqrt{2}},\quad e'_{b_\ell}=\frac{e_{2\ell-1}-ie_{2\ell}}{\sqrt{2}},\quad\mbox{and}\quad e'_{c_k}=e_{2m+k}.\]
Then $(e'_1,\ldots,e'_n)$ is simultaneously a $w$-dual and a
$w$-conjugate basis.
In both cases we conclude that
\begin{equation}
\label{3.1.4}
\emptyset\not=\{\mathcal{F}(v_1,\ldots,v_n):\mbox{$(v_1,\ldots,v_n)$ is $w$-dual and $w$-conjugate}\}\subset\mathcal{O}_w\cap\mathfrak{O}_w.
\end{equation}

Let us show the inverse inclusion. Assume
$\mathcal{F}=(F_0,\ldots,F_n)\in\mathcal{O}_w\cap\mathfrak{O}_w$.
Let $(v_1,\ldots,v_n)$ be a $w$-dual basis such that $\mathcal{F}=\mathcal{F}(v_1,\ldots,v_n)$.
Since $\mathcal{F}\in\mathfrak{O}_w$ we have
\begin{equation}
\label{1-new}
w_k=\min\{\ell=1,\ldots,n:\gamma(F_k)\cap F_\ell\not=\gamma(F_{k-1})\cap F_\ell\}.
\end{equation}
For all $\ell\in\{0,\ldots,n\}$ we will now construct a $w$-dual basis
$(v_1^{(\ell)},\ldots,v_n^{(\ell)})$ of $V$ such that
\begin{eqnarray}
\label{2-new}
& F_k=\langle v_1^{(\ell)},\ldots,v_k^{(\ell)}\rangle_\mathbb{C}
\quad\mbox{
for all $k\in\{1,\ldots,n\}$}
\end{eqnarray}
and
\begin{eqnarray}
\label{3-new}
&
\gamma(v_k^{(\ell)})=\left\{\begin{array}{ll} 
\epsilon v_{w_k}^{(\ell)} & \mbox{if $w_k\geq k$,} \\
v_{w_k}^{(\ell)} & \mbox{if $w_k<k$}
\end{array}\right.
\quad\mbox{
for all $k\in\{1,\ldots,\ell\}$.}
\end{eqnarray}
This will then imply
$\mathcal{F}=\mathcal{F}(v_1^{(n)},\ldots,v_n^{(n)})$ for a
basis $(v_1^{(n)},\ldots,v_n^{(n)})$ both $w$-dual and $w$-conjugate,
i.e., will complete the proof of (a).

Our construction is done by induction starting with $(v_1^{(0)},\ldots,v_n^{(0)})=(v_1,\ldots,v_n)$.
Let $\ell\in\{1,\ldots,n\}$, and assume that $(v_1^{(\ell-1)},\ldots,v_n^{(\ell-1)})$ is constructed.
We distinguish three cases.

\smallskip
\noindent
{\it Case 1: $w_\ell<\ell$.}

The inequality $w_\ell<\ell=w(w_\ell)$ implies $\gamma(v_{w_\ell}^{(\ell-1)})=\epsilon v_{\ell}^{(\ell-1)}$, whence
$\gamma(v_{\ell}^{(\ell-1)})=v_{w_\ell}^{(\ell-1)}$ as $\gamma^2=\epsilon\mathrm{id}$.
Therefore the basis $(v_1^{(\ell)},\ldots,v_n^{(\ell)}):=(v_1^{(\ell-1)},\ldots,v_n^{(\ell-1)})$ fulfills conditions (\ref{2-new}) and (\ref{3-new}).

\smallskip
\noindent
{\it Case 2: $w_\ell=\ell$.}

This case occurs only in type A1.
On the one hand, (\ref{1-new}) yields
\[\gamma(v_\ell^{(\ell-1)})\in\langle v_1^{(\ell-1)},\ldots,v_\ell^{(\ell-1)},v_{w_1}^{(\ell-1)},\ldots,v_{w_{\ell-1}}^{(\ell-1)}\rangle_\mathbb{C}.\]
On the other hand, since the basis $(v_1^{(\ell-1)},\ldots,v_n^{(\ell-1)})$ is $w$-dual,
we have
\[v_\ell^{(\ell-1)}\in\langle v_1^{(\ell-1)},\ldots,v_{\ell-1}^{(\ell-1)},v_{w_1}^{(\ell-1)},\ldots,v_{w_{\ell-1}}^{(\ell-1)}\rangle_\mathbb{C}^\perp\,.\]
Hence, as $\gamma$ preserves orthogonality with respect to $\omega$,
\begin{eqnarray*}
\gamma(v_\ell^{(\ell-1)}) & \in & \langle \gamma(v_1^{(\ell-1)}),\ldots,\gamma(v_{\ell-1}^{(\ell-1)}),\gamma(v_{w_1}^{(\ell-1)}),\ldots,\gamma(v_{w_{\ell-1}}^{(\ell-1)})\rangle_\mathbb{C}^\perp \\
 & & =\langle v_1^{(\ell-1)},\ldots,v_{\ell-1}^{(\ell-1)},v_{w_1}^{(\ell-1)},\ldots,v_{w_{\ell-1}}^{(\ell-1)}\rangle_\mathbb{C}^\perp.
\end{eqnarray*}
Altogether this yields a nonzero complex number $\lambda$ such that $\gamma(v_\ell^{(\ell-1)})=\lambda v_\ell^{(\ell-1)}$.
Since $\gamma$ is an involution, we have $\lambda\in\{+1,-1\}$.
In addition we know that 
\[\lambda=\omega(\gamma(v_\ell^{(\ell-1)}),v_\ell^{(\ell-1)})={}^t\overline{v_\ell^{(\ell-1)}}\cdot v_\ell^{(\ell-1)}\in\mathbb{R}^+.\]
Whence $\gamma(v_\ell^{(\ell-1)})=v_\ell^{(\ell-1)}$, and we can put
$(v_1^{(\ell)},\ldots,v_n^{(\ell)}):=(v_1^{(\ell-1)},\ldots,v_n^{(\ell-1)})$.

\smallskip
\noindent
{\it Case 3: $w_\ell>\ell$.}

By (\ref{1-new}) we have
\[\gamma(v_\ell^{(\ell-1)})\in
\langle v_k^{(\ell-1)}:1\leq k\leq w_\ell\rangle_\mathbb{C}+\langle v_{w_k}^{(\ell-1)}:1\leq k\leq \ell-1\rangle_\mathbb{C}.
\]
On the other hand, arguing as in Case 2 we see that
\[\gamma(v_\ell^{(\ell-1)})\in\langle v_1^{(\ell-1)},\ldots,v_{\ell-1}^{(\ell-1)},v_{w_1}^{(\ell-1)},\ldots,v_{w_{\ell-1}}^{(\ell-1)}\rangle_\mathbb{C}^\perp.\]
Hence we can write
\begin{equation}
\label{4-new}
\gamma(v_\ell^{(\ell-1)})=\sum_{k\in I}\lambda_kv_k^{(\ell-1)}
\quad\mbox{with $\lambda_k\in\mathbb{C}$ for all $k$,}
\end{equation}
where $I:=\{k:\ell\leq k\leq w_\ell\mbox{ and }\ell\leq w_k\}\subset\hat{I}:=\{k:\ell\leq k\mbox{ and }\ell\leq w_k\}$.
Using (\ref{4-new}), the fact that the basis $(v_1^{(\ell-1)},\ldots,v_n^{(\ell-1)})$ is $w$-dual, and the definition of $\omega$ and $\gamma$, we see that
\begin{equation}
\label{5-new}
\lambda_{w_\ell}=\omega(v_\ell^{(\ell-1)},\gamma(v_\ell^{(\ell-1)}))=\epsilon\cdot{}^tv_\ell^{(\ell-1)}\overline{v_\ell^{(\ell-1)}}=\epsilon\alpha
\end{equation}
with $\alpha\in\mathbb{R}$, $\alpha>0$.
Set 
\begin{eqnarray*}
 && v_\ell^{(\ell)}:=\frac{1}{\sqrt{\alpha}}v_\ell^{(\ell-1)},\quad
v_{w_\ell}^{(\ell)}:=\frac{\epsilon}{\sqrt{\alpha}}\gamma(v_\ell^{(\ell-1)}),\\
 && v_k^{(\ell)}:=v_k^{(\ell-1)}-\frac{\omega(v_k^{(\ell-1)},\gamma(v_\ell^{(\ell-1)}))}{\lambda_{w_\ell}}v_\ell^{(\ell-1)}\ \mbox{ for all $k\in \hat{I}\setminus\{\ell,w_\ell\}$}, \\
&& v_k^{(\ell)}:=v_k^{(\ell-1)}\ \mbox{ for all $k\in\{1,\ldots,n\}\setminus \hat{I}$.}
\end{eqnarray*}
Using (\ref{4-new}) and (\ref{5-new}) it is easy to check that $(v_1^{(\ell)},\ldots,v_n^{(\ell)})$ is a $w$-dual basis which satisfies (\ref{2-new}) and (\ref{3-new}).
This completes Case 3.
\end{proof}

\begin{proof}[Proof of Proposition \ref{P1}\,{\rm (b)}--{\rm (d)}]
Let $\mathcal{F}\in\mathcal{O}_w$, so
$\mathcal{F}=\mathcal{F}(v_1,\ldots,v_n)$ for some $w$-dual basis
$(v_1,\ldots,v_n)$ of $V$. From the definition of
$w$-dual basis we see that
\begin{eqnarray*}
\langle v_1,\ldots,v_{n-k}\rangle_\mathbb{C}^\perp & = & \langle v_j:w_j\notin\{1,\ldots,n-k\}\rangle_\mathbb{C} \\
 & = & \langle v_j:w_j\in\{n-k+1,\ldots,n\}\rangle_\mathbb{C} \\
 & = & \langle v_j:(w_0w)_j\in\{1,\ldots,k\}\rangle_\mathbb{C}\,.
\end{eqnarray*}
Therefore
\[\dim\langle v_1,\ldots,v_{n-k}\rangle_\mathbb{C}^\perp\cap\langle v_1,\ldots,v_\ell\rangle_\mathbb{C}=\big|\big\{j\in\{1,\ldots,\ell\}:(w_0w)_j\in\{1,\ldots,k\}\big\}\big|\]
for all $k,\ell\in\{1,\ldots,n\}$, which yields the equality
$w(\mathcal{F}^\perp,\mathcal{F})=w_0w$ and hence the inclusion
\begin{equation}
\label{1}
\mathcal{O}_w\subset\{\mathcal{F}\in
X:(\mathcal{F}^\perp,\mathcal{F})\in\mathbb{O}_{w_0w}\}.
\end{equation}
Let $\mathcal{F}=\mathcal{F}(v_1,\ldots,v_n)\in\mathfrak{O}_w$ for a
$w$-conjugate basis $(v_1,\ldots,v_n)$ of $V$. From the
definition of $w$-conjugate basis we get
\[\gamma(\langle v_1,\ldots,v_k\rangle_\mathbb{C})=\langle v_{w_j}:j\in\{1,\ldots,k\}\rangle_\mathbb{C}\,.\]
Therefore
\[\dim\gamma(\langle v_1,\ldots,v_k\rangle_\mathbb{C})\cap\langle v_1,\ldots,v_\ell\rangle_\mathbb{C}=\big|\big\{j\in\{1,\ldots,\ell\}:w^{-1}_j\in\{1,\ldots,k\}\big\}\big|\]
for all $k,\ell\in\{1,\ldots,n\}$, whence
$w(\gamma(\mathcal{F}),\mathcal{F})=w^{-1}=w$ (since $w$ is an
involution). This implies the inclusion
\begin{equation}
\label{2} \mathfrak{O}_w\subset\{\mathcal{F}\in
X:(\gamma(\mathcal{F}),\mathcal{F})\in\mathbb{O}_{w}\}.
\end{equation}

It is clear that the group $K$ acts transitively on the set of
$w$-dual bases, hence $\mathcal{O}_w$ is a $K$-orbit. Moreover
(\ref{1}) implies that the orbits $\mathcal{O}_w$ (for
$w\in\mathfrak{I}_w^\epsilon$) are pairwise distinct. Similarly the
subsets $\mathfrak{O}_w$ (for $w\in\mathfrak{I}_w^\epsilon$) are
pairwise distinct $G^0$-orbits.

We denote by $L_k$ the $k\times k$ matrix with $1$ on the
antidiagonal and $0$ elsewhere. Let
$\underline{v}=(v_1,\ldots,v_n)$ be a $w_0$-dual basis, in other
words,
\[
\left\{
\begin{array}{ll}
\omega(v_k,v_{n+1-k})=\left\{\begin{array}{ll}
1 & \mbox{if $k\leq\frac{n+1}{2}$} \\[1mm] \epsilon & \mbox{if $k>\frac{n+1}{2}$}
\end{array}\right. \\[4mm]
\omega(v_k,v_\ell)=0\quad\mbox{if $\ell\not=n+1-k$;}
\end{array}
\right.
\]
hence $L:=\left(\omega(v_k,v_\ell)\right)_{1\leq k,\ell\leq n}$ is
the following matrix
\[
L=L_n\quad \mbox{(type A1)}\quad\mbox{or}\quad L=\left(\begin{matrix} 0 & L_m \\ -L_m & 0 \end{matrix}\right)\quad\mbox{(type A2, $n=2m$)}.
\]
The flag $\mathcal{F}_0:=\mathcal{F}(v_1,\ldots,v_n)$ satisfies the
condition $\mathcal{F}_0^\perp=\mathcal{F}_0$. By
Richardson--Springer \cite{Richardson-Springer} every $K$-orbit
$\mathcal{O}\subset X$ contains an element of the form
$g\mathcal{F}_0$ with $g\in G$ such that
$h:=L{}^t[g]_{\underline{v}}L^{-1}[g]_{\underline{v}}\in N$ where
$[g]_{\underline{v}}$ denotes the matrix of $g$ in the basis
$\underline{v}$ and $N$ stands for the group of invertible $n\times
n$ matrices with exactly one nonzero coefficient in each row
and each column. Note that
$Lh={}^t[g]_{\underline{v}}L[g]_{\underline{v}}$ also belongs to $N$
(as $L$ does) and is symmetric in type A1 and antisymmetric in
type A2. Consequently, there are $w\in\mathfrak{I}_n$ and constants
$t_1,\ldots,t_n\in\mathbb{C}^*$ such that the matrix
$Lh=:\left(a_{k,\ell}\right)_{1\leq k,\ell\leq n}$ has the following
entries:
\[
a_{k,\ell}=0\ \mbox{ if $\ell\not=w_k$,}\qquad
a_{k,w_k}=\left\{\begin{array}{ll} t_k & \mbox{if $w_k\geq k$}
\\ \epsilon t_k & \mbox{if $w_k\leq k$.}
\end{array}\right.
\]
Since $\epsilon=-1$ in type A2, we must have $w_k\not=k$ for all $k$,
hence $w\in\mathfrak{I}'_n$. Therefore in both cases
$w\in\mathfrak{I}^\epsilon_n$. 
For each $k\in\{1,\ldots,n\}$, we
choose $s_k=s_{w_k}\in\mathbb{C}^*$ such that $s_k^{-2}=t_k$
(note that $t_{w_k}=t_k$).
Thus
\[g\mathcal{F}_0=\mathcal{F}(s_1gv_1,\ldots,s_ngv_n)\,,\]
and for all $k,\ell\in\{1,\ldots,n\}$ we have
\[
\omega(s_kgv_k,s_\ell g
v_\ell)=s_ks_\ell\omega(gv_k,gv_\ell)=s_ks_\ell
a_{k,\ell}=\left\{\begin{array}{ll} 1 & \mbox{if $\ell=w_k\geq k$}
\\ \epsilon & \mbox{if $\ell=w_k< k$} \\ 0 & \mbox{if
$\ell\not=w_k$}.
\end{array}\right.
\]
Whence $g\mathcal{F}_0\in\mathcal{O}_w$. This yields
$\mathcal{O}=\mathcal{O}_w$.

We have shown that the subsets $\mathcal{O}_w$ (for
$w\in\mathfrak{I}^\epsilon_w$) are precisely the $K$-orbits of $X$. In
particular, $X=\bigcup_{w\in\mathfrak{I}^\epsilon_w}\mathcal{O}_w$ so
that the inclusion (\ref{1}) is actually an equality. By Matsuki duality the
number of $G^0$-orbits of $X$ is the same as the number of
$K$-orbits, hence the subsets $\mathfrak{O}_w$ (for
$w\in\mathfrak{I}^\epsilon_w$) are exactly the $G^0$-orbits of $X$.
Thereby equality holds in (\ref{2}). Finally we have shown parts
{\rm (b)} and {\rm (c)} of the statement.

Part (a) implies that,  for every $w\in\mathfrak{I}_n^\epsilon$, the intersection $\mathcal{O}_w\cap\mathfrak{O}_w$ is nonempty and consists of a single $K\cap G^0$-orbit.
This shows that the orbit $\mathfrak{O}_w$ is the Matsuki dual of $\mathcal{O}_w$ (see \cite{Matsuki3}), and part {\rm (d)} of the statement is also proved.
\end{proof}

\begin{proof}[Proof of Proposition \ref{P2}\,{\rm (a)}]
We write $w$ as a product of pairwise disjoint transpositions
$w=(a_1;b_1)\cdots(a_m;b_m)$,
and let $c_{m+1}<\ldots<c_p$ be the elements of $\{k:w_k=k,\ \varepsilon_k=+1\}$
and $d_{m+1}<\ldots<d_q$ be the elements of $\{k:w_k=k,\ \varepsilon_k=-1\}$.
Let $\{e_1,\ldots,e_n\}=\{e_1^+,\ldots,e_p^+\}\cup\{e_1^-,\ldots,e_q^-\}$
so that $V_+=\langle e_\ell^+:\ell=1,\ldots,p\rangle_\mathbb{C}$
and $V_-=\langle e_\ell^-:\ell=1,\ldots,q\rangle_\mathbb{C}$.
Setting
\begin{eqnarray*}
& v_{a_k}:=\frac{e^+_k+e^-_k}{\sqrt{2}}\,,\,\,\ v_{b_k}:=\frac{e^+_k-e^-_k}{\sqrt{2}}\ \mbox{ for all $k\in\{1,\ldots,m\}$,} \\
& v_{c_k}:=e^+_k\ \mbox{ for all $k\in\{m+1,\ldots,p\}$, \ and }\ v_{d_k}:=e^-_k\ \mbox{ for all $k\in\{m+1,\ldots,q\}$,}
\end{eqnarray*}
it is easy to see that $(v_1,\ldots,v_n)$ is a basis of $V$ which is
$(w,\varepsilon)$-dual and $(w,\varepsilon)$-conjugate.  Therefore
\begin{equation}
\label{3.5.14}
\emptyset\not=\{\mathcal{F}(\underline{v}):\mbox{$\underline{v}$ is $(w,\varepsilon)$-dual and $(w,\varepsilon)$-conjugate}\}\subset \mathcal{O}_{(w,\varepsilon)}\cap\mathfrak{O}_{(w,\varepsilon)}.
\end{equation}

	For showing the inverse inclusion, consider
$\mathcal{F}=(F_0,\ldots,F_n)\in \mathcal{O}_{(w,\varepsilon)}\cap\mathfrak{O}_{(w,\varepsilon)}$.
On the one hand, since $\mathcal{F}\in\mathfrak{O}_{(w,\varepsilon)}$ there is a $(w,\varepsilon)$-dual basis $(v_1,\ldots,v_n)$ such that
$\mathcal{F}=\mathcal{F}(v_1,\ldots,v_n)$. On the other hand, the fact that $\mathcal{F}\in\mathcal{O}_{(w,\varepsilon)}$ yields
\begin{equation}
\label{8-new}
w_k=\min\{\ell=1,\ldots,n:\delta(F_k)\cap F_\ell\not=\delta(F_{k-1})\cap F_\ell\}\ \mbox{ for all $k\in\{1,\ldots,n\}$.}
\end{equation}

For all $\ell\in\{0,\ldots,n\}$ we will now construct a $(w,\varepsilon)$-dual basis
$(v_1^{(\ell)},\ldots,v_n^{(\ell)})$ such that
\begin{eqnarray}
\label{9-new}
 & F_k=\langle v_1^{(\ell)},\ldots,v_k^{(\ell)}\rangle_\mathbb{C}\quad\mbox{for all $k\in\{1,\ldots,n\}$}
\\
\label{10-new} 
 \mbox{and} & \delta(v_k^{(\ell)})=\left\{\begin{array}{ll}
v_{w_k}^{(\ell)} & \mbox{if $w_k\not=k$,} \\
\varepsilon_k v_{k}^{(\ell)} & \mbox{if $w_k=k$} \\
\end{array}\right.\quad\mbox{for all $k\in\{1,\ldots,\ell\}$.}
\end{eqnarray}
This will then provide
a basis $(v_1^{(n)},\ldots,v_n^{(n)})$ which is both $(w,\varepsilon)$-dual and $(w,\varepsilon)$-conjugate and such that
$\mathcal{F}=\mathcal{F}(v_1^{(n)},\ldots,v_n^{(n)})$, i.e., will complete the proof of part {\rm (a)}.

The construction is carried out by induction on $\ell\in\{0,\ldots,n\}$, and
 is initialized by setting
$(v_1^{(0)},\ldots,v_n^{(0)}):=(v_1,\ldots,v_n)$. Let
$\ell\in\{1,\ldots,n\}$ be such that the basis
$(v_1^{(\ell-1)},\ldots,v_n^{(\ell-1)})$ is already constructed. We
distinguish three cases.

\medskip
\noindent {\it Case 1: $w_\ell<\ell$.}

Since in this case since $w_\ell\leq\ell-1$ and $w(w_\ell)=\ell$, we get
$\delta(v_{w_\ell}^{(\ell-1)})=v_{\ell}^{(\ell-1)}$ and hence
$\delta(v_{\ell}^{(\ell-1)})=v_{w_\ell}^{(\ell-1)}$ (as
$\delta$ is an involution). Therefore the basis
$(v_1^{(\ell)},\ldots,v_n^{(\ell)}):=(v_1^{(\ell-1)},\ldots,v_n^{(\ell-1)})$
satisfies conditions (\ref{9-new}) and (\ref{10-new}).

\medskip
\noindent {\it Case 2: $w_\ell=\ell$.}

Using (\ref{8-new}) we have
\[\delta(v_\ell^{(\ell-1)})\in\langle v_1^{(\ell-1)},v_2^{(\ell-1)},\ldots,v_\ell^{(\ell-1)}\rangle_\mathbb{C}+\langle v_{w_1}^{(\ell-1)},\ldots,v_{w_{\ell-1}}^{(\ell-1)}\rangle_\mathbb{C}\,.\]
On the other hand, the fact that the basis
$(v_1^{(\ell-1)},\ldots,v_n^{(\ell-1)})$ is
$(w,\varepsilon)$-conjugate implies
\begin{align}v_\ell^{(\ell-1)}\in\langle v_1^{(\ell-1)},\ldots,v_{\ell-1}^{(\ell-1)},v_{w_1}^{(\ell-1)},\ldots,v_{w_{\ell-1}}^{(\ell-1)}\rangle_\mathbb{C}^\dag.
\label{E:induction}
\end{align}
Since $\delta$ preserves orthogonality with respect to the form
$\phi$ and since $\delta(v_k^{(\ell-1)})=v_{w_k}^{(\ell-1)}$
for all $k\in\{1,\ldots,\ell-1\}$ (by the induction hypothesis), (\ref{E:induction}) yields
\[\delta(v_\ell^{(\ell-1)})\in\langle v_1^{(\ell-1)},\ldots,v_{\ell-1}^{(\ell-1)},v_{w_1}^{(\ell-1)},\ldots,v_{w_{\ell-1}}^{(\ell-1)}\rangle_\mathbb{C}^\dag.\]
Altogether we deduce that
\[\delta(v_\ell^{(\ell-1)})=\lambda
v_\ell^{(\ell-1)}\quad\mbox{for some $\lambda\in\mathbb{C}^*$.}\] 
As $\delta$ is an involution, we conclude that $\lambda\in\{+1,-1\}$.
Moreover, knowing that
$\phi(v_\ell^{(\ell-1)},v_\ell^{(\ell-1)})=\varepsilon_\ell$ we see
that
\[\lambda\varepsilon_\ell=\phi(v_\ell^{(\ell-1)},\delta(v_\ell^{(\ell-1)}))={}^t\overline{v_\ell^{(\ell-1)}}\Phi\Phi v_\ell^{(\ell-1)}={}^t\overline{v_\ell^{(\ell-1)}}v_\ell^{(\ell-1)}\geq 0.\]
Finally we conclude that $\lambda=\varepsilon_\ell$. It follows that
the basis
$(v_1^{(\ell)},\ldots,v_n^{(\ell)}):=(v_1^{(\ell-1)},\ldots,v_n^{(\ell-1)})$
satisfies (\ref{9-new}) and (\ref{10-new}).

\medskip
\noindent {\it Case 3: $w_\ell>\ell$.}

Invoking (\ref{8-new}), the fact that
$(v_1^{(\ell-1)},\ldots,v_n^{(\ell-1)})$ is
$(w,\varepsilon)$-dual, the induction hypothesis, and the fact
that $\delta$ preserves orthogonality with respect to $\phi$, we see as in Case 2 that
\begin{eqnarray*}
\delta(v_\ell^{(\ell-1)}) & \in & \big(\langle v_k^{(\ell-1)}:1\leq
k\leq w_\ell\rangle_\mathbb{C}+\langle v_{w_k}^{(\ell-1)}:1\leq k\leq
\ell-1\rangle_\mathbb{C}\big) \\
 & & \cap\,\langle
v_1^{(\ell-1)},\ldots,v_{\ell-1}^{(\ell-1)},v_{w_1}^{(\ell-1)},\ldots,v_{w_{\ell-1}}^{(\ell-1)}\rangle_\mathbb{C}^\dag.
\end{eqnarray*} Therefore
\begin{equation} \label{11-new}
\delta(v_\ell^{(\ell-1)})=\sum_{k\in
I}\lambda_kv_k^{(\ell-1)}\quad\mbox{with $\lambda_k\in\mathbb{C}$,}
\end{equation}
where $I:=\{k:\ell\leq k\leq w_\ell,\ \ell\leq w_k\}\subset\hat{I}:=\{k:\ell\leq k,\ \ell\leq w_k\}$.
This implies
\[\lambda_{w_\ell}=\phi(v_\ell^{(\ell-1)},\delta(v_\ell^{(\ell-1)}))={}^t\overline{v_\ell^{(\ell-1)}}\Phi\Phi v_\ell^{(\ell-1)}={}^t\overline{v_\ell^{(\ell-1)}}v_\ell^{(\ell-1)}\in\mathbb{R}_+^*.\]
It is straightforward to check that the basis
$(v_1^{(\ell)},\ldots,v_n^{(\ell)})$ defined by
\begin{eqnarray*}
 & & v_\ell^{(\ell)}:=\frac{1}{\sqrt{\lambda_{w_\ell}}}v_\ell^{(\ell-1)},\quad
v_{w_\ell}^{(\ell)}:=\frac{1}{\sqrt{\lambda_{w_\ell}}}\delta(v_\ell^{(\ell-1)}),
\\
 & & v_k^{(\ell)}:=v_k^{(\ell-1)}-\frac{\phi(v_k^{(\ell-1)},\delta(v_\ell^{(\ell-1)}))}{\lambda_{w_\ell}}v_\ell^{(\ell-1)}
\quad\mbox{for all $k\in \hat{I}\setminus\{\ell,w_\ell\}$,} \\
 &  & v_k^{(\ell)}:=v_k^{(\ell-1)}\quad\mbox{for all $k\in\{1,\ldots,n\}\setminus \hat{I}$}
\end{eqnarray*}
is $(w,\varepsilon)$-dual and satisfies conditions
(\ref{9-new}) and (\ref{10-new}).
\end{proof}

\begin{proof}[Proof of Proposition \ref{P2}\,{\rm (b)}--{\rm (d)}]
Let $\mathcal{F}=\mathcal{F}(v_1,\ldots,v_n)$ where
$(v_1,\ldots,v_n)$ is a $(w,\varepsilon)$-conjugate basis. Then by
definition we have
\[\delta(\langle v_1,\ldots,v_k\rangle_\mathbb{C})=\langle v_{w_j}:j\in\{1,\ldots,k\}\rangle_\mathbb{C}\,,\]
hence \begin{eqnarray*} \dim \delta(\langle
v_1,\ldots,v_k\rangle_\mathbb{C})\cap \langle v_1,\ldots,v_\ell\rangle_\mathbb{C} & = &
|\{j\in\{1,\ldots,\ell\}:w^{-1}_j\in\{1,\ldots,k\}\}| \\
  & = &
|\{j\in\{1,\ldots,\ell\}:w_j\in\{1,\ldots,k\}\}|
\end{eqnarray*}
for all $k,\ell\in\{1,\ldots,n\}$. Moreover, for
$\varepsilon\in\{+1,-1\}$ we have
\begin{eqnarray*}
\langle
v_1,\ldots,v_\ell\rangle_\mathbb{C}\cap\ker(\delta-\varepsilon\mathrm{id}) & =
&
\langle v_j:1\leq w_j=j\leq \ell\mbox{ and }\varepsilon_j=\varepsilon\rangle_\mathbb{C} \\
 & & +\langle
v_j+\varepsilon v_{w_j}:1\leq w_j<j\leq \ell\rangle_\mathbb{C}\,.
\end{eqnarray*}
Therefore
\[
\big(\dim \langle
v_1,\ldots,v_\ell\rangle_\mathbb{C}\cap V_+,\dim \langle
v_1,\ldots,v_\ell\rangle_\mathbb{C}\cap V_-\big)_{\ell=1}^n=\varsigma(w,\varepsilon).
\]
Altogether this yields the inclusion
\begin{equation}
\label{3} \mathcal{O}_{(w,\varepsilon)}\subset\big\{\mathcal{F}\in
X: (\delta(\mathcal{F}),\mathcal{F})\in\mathbb{O}_{w}\mbox{ and
}\varsigma(\delta:\mathcal{F})=\varsigma(w,\varepsilon)\big\}.
\end{equation}

Now let $(v_1,\ldots,v_n)$ be a $(w,\varepsilon)$-dual basis. Then
\begin{eqnarray*}
\langle v_1,\ldots,v_{n-k}\rangle_\mathbb{C}^\dag\cap\langle
v_1,\ldots,v_\ell\rangle_\mathbb{C} & = & \langle
v_j:j\in\{1,\ldots,\ell\}\mbox{ and }w_j>n-k\rangle_\mathbb{C} \\
 & = & \langle
v_j:j\in\{1,\ldots,\ell\}\mbox{ and }(w_0w)_j\leq k\rangle_\mathbb{C}\,,
\end{eqnarray*}
whence
\[\dim\langle v_1,\ldots,v_{n-k}\rangle_\mathbb{C}^\dag\cap\langle v_1,\ldots,v_\ell\rangle_\mathbb{C}=|\{j\in\{1,\ldots,\ell\}:(w_0w)_j\in\{1,\ldots,k\}|\]
for all $k,\ell\in\{1,\ldots,n\}$. In particular we see that
\[\langle v_1,\ldots,v_\ell\rangle_\mathbb{C}=\langle v_1,\ldots,v_\ell\rangle_\mathbb{C}\cap\langle v_1,\ldots,v_\ell\rangle_\mathbb{C}^\dag\oplus\langle v_j:
j\in\{1,\ldots,\ell\}\mbox{ and }w_j\leq \ell\rangle_\mathbb{C}.\] It follows
that the vectors $v_j$ (for $1\leq w_j=j\leq \ell$) and
$\frac{1}{\sqrt{2}}(v_j\pm v_{w_j})$ (for $1\leq w_j<j\leq \ell$)
form a basis of the quotient space $\langle
v_1,\ldots,v_\ell\rangle_\mathbb{C}/\langle v_1,\ldots,v_\ell\rangle_\mathbb{C}\cap\langle
v_1,\ldots,v_\ell\rangle_\mathbb{C}^\dag$. This basis is $\phi$-orthogonal and,
since $(v_1,\ldots,v_n)$ is $(w,\varepsilon)$-dual, we have
\[
\textstyle \phi(v_j,v_j)=\varepsilon_j\mbox{ if $w_j=j$};\quad
\left\{\begin{array}{l}
\phi\big(\frac{v_j+v_{w_j}}{\sqrt{2}},\frac{v_j+v_{w_j}}{\sqrt{2}}\big)=1,\\[2mm]
\phi\big(\frac{v_j-v_{w_j}}{\sqrt{2}},\frac{v_j-v_{w_j}}{\sqrt{2}}\big)=-1
\end{array}\right.\mbox{ if $w_j<j$.}
\]
Therefore the signature of $\phi$ on $\langle
v_1,\ldots,v_\ell\rangle_\mathbb{C}/\langle v_1,\ldots,v_\ell\rangle_\mathbb{C}\cap\langle
v_1,\ldots,v_\ell\rangle_\mathbb{C}^\dag$ is the pair
\[
\begin{array}{rll}
\big( & |\{j:w_j=j\leq\ell,\
\varepsilon_j=+1\}|+|\{j:w_j<j\leq\ell\}|,
\\[2mm]
 & |\{j:w_j=j\leq\ell,\
\varepsilon_j=-1\}|+|\{j:w_j<j\leq\ell\}| & \big)\end{array}
\]
which coincides with the $\ell$-th term of the sequence
$\varsigma(w,\varepsilon)$. Finally, we obtain the inclusion
\begin{equation}
\label{4} \mathfrak{O}_{(w,\varepsilon)}\subset\big\{\mathcal{F}\in
X: (\mathcal{F}^\dag,\mathcal{F})\in\mathbb{O}_{w_0w}\mbox{ and
}\varsigma(\phi:\mathcal{F})=\varsigma(w,\varepsilon)\big\}.
\end{equation}

It is clear that $K$ (resp., $G^0$) acts transitively on the set of
$(w,\varepsilon)$-conjugate bases (resp., $(w,\varepsilon)$-dual
bases). Hence the subsets $\mathcal{O}_{(w,\varepsilon)}$ (resp.
$\mathfrak{O}_{(w,\varepsilon)}$) are $K$-orbits (resp.,
$G^0$-orbits). Moreover, in view of (\ref{3}) and (\ref{4}) these
orbits are pairwise distinct.

Let $\mathcal{O}$ be a $K$-orbit of $X$. Note that the basis $(e_1,\ldots,e_n)$ of
$V$ satisfies $\delta(e_j)=\pm
e_j$ for all $j$, hence the flag
$\mathcal{F}_0:=\mathcal{F}(e_1,\ldots,e_n)$ satisfies
$\delta(\mathcal{F}_0)=\mathcal{F}_0$. By
\cite{Richardson-Springer} the $K$-orbit $\mathcal{O}$ contains an
element of the form $g\mathcal{F}_0$ for some $g\in G$ such that
$h:=\Phi g^{-1}\Phi g\in N$ where, as in the proof of Proposition
\ref{P1}, $N\subset G$ stands for the subgroup of matrices with
exactly one nonzero entry in each row and each column. Since
$\Phi\in N$ we also have $\Phi h\in N$. Hence there is a permutation
$w\in\mathfrak{S}_n$ and constants
$t_1,\ldots,t_n\in\mathbb{C}^*$ such that the matrix $\Phi
h=:\big(a_{k,\ell}\big)_{1\leq k,\ell\leq n}$ has entries
\[a_{k,\ell}=0\ \mbox{ if $\ell\not=w_k$,}\quad a_{k,w_k}=t_k\quad\mbox{ for all $k,\ell\in\{1,\ldots,n\}$}.\]
The relation $\Phi h=g^{-1}\Phi g$ shows that $(\Phi h)^2=1_n$. This
yields $w^2=\mathrm{id}$ and $t_kt_{w_k}=1$ for all $k$; hence
\[t_{w_k}=t_k^{-1}\ \mbox{ whenever $w_k\not=k$}\quad\mbox{and}\quad \varepsilon_k:=t_k\in\{+1,-1\}\ \mbox{ whenever
$w_k=k$}.\] In addition, since $\Phi h$ is conjugate to $\Phi$, its
eigenvalues $+1$ and $-1$ have respective multiplicities $p$ and
$q$, which forces
\[(w,\varepsilon)\in\mathfrak{I}_n(p,q).\]
For each $k\in\{1,\ldots,n\}$ with $w_k<k$, we
take $s_k\in\mathbb{C}^*$ such that $t_k=s_k^2$ and set
$s_{w_k}=s_k^{-1}$ (so that $s_{w_k}^2=t_k^{-1}=t_{w_k}$). Moreove,r
for each $k\in\{1,\ldots,n\}$ with $w_k=k$ we set $s_k=1$. The
equality $\Phi g=g\Phi h$ yields
\[
\delta(g(s_ke_k))=s_k\Phi g e_k=s_k g(\Phi h)e_k=s_k
g(t_{w_k}e_{w_k})=s_{w_k}^{-1}
g(s_{w_k}^2e_{w_k})=g(s_{w_k}e_{w_k})
\]
for all $k\in\{1,\ldots,n\}$ such that $w_k\not=k$, and
\[
\delta(g(s_k e_k))=\delta(g(e_k))=\Phi g e_k=g(\Phi
h)e_k=g(\varepsilon_k e_k)=\varepsilon_k g(e_k)=\varepsilon_k g(s_k
e_k)
\]
for all $k\in\{1,\ldots,n\}$ such that $w_k=k$. Hence the family
$(g(s_1 e_1),\ldots,g(s_n e_n))$ is a $(w,\varepsilon)$-conjugate basis of
$V$. Thus
\[g\mathcal{F}_0=g\mathcal{F}(e_1,\ldots,e_n)=g\mathcal{F}(s_1 e_1,\ldots,s_n e_n)=\mathcal{F}(g(s_1 e_1),\ldots,g(s_n e_n))\in\mathcal{O}_{(w,\varepsilon)}.\]
Therefore $\mathcal{O}=\mathcal{O}_{(w,\varepsilon)}$.

We conclude that the subsets $\mathcal{O}_{(w,\varepsilon)}$ (for
$(w,\varepsilon)\in\mathfrak{I}_n(p,q)$) are exactly the $K$-orbits
of $X$. Matsuki duality then guarantees that the subsets
$\mathfrak{O}_{(w,\varepsilon)}$ (for
$(w,\varepsilon)\in\mathfrak{I}_n(p,q)$) are exactly the
$G^0$-orbits of $X$. This fact implies in particular that
equality holds in (\ref{3}) and (\ref{4}). Altogether we have shown
parts {\rm (b)} and {\rm (c)} of the statement.

Finally, part {\rm (a)} shows that for every
$(w,\varepsilon)\in\mathfrak{I}_n(p,q)$ the intersection
$\mathcal{O}_{(w,\varepsilon)}\cap\mathfrak{O}_{(w,\varepsilon)}$
consists of a single $K\cap G^0$-orbit, which guarantees that the
orbits $\mathcal{O}_{(w,\varepsilon)}$ and
$\mathfrak{O}_{(w,\varepsilon)}$ are Matsuki dual (see
\cite{Matsuki,Matsuki3}). This proves part {\rm (d)} of the
statement. The proof of Proposition \ref{P2} is complete.
\end{proof}

\begin{proof}[Proof of Proposition \ref{P3}]
The proof relies on the following two technical claims.

\medskip
\item
{\it Claim 1:}
For every signed involution $(w,\varepsilon)\in\mathfrak{I}_n(p,q)$ we have
$\mathcal{O}_{(w,\varepsilon)}\cap X_\omega=\emptyset$ unless $(w,\varepsilon)\in\mathfrak{I}_n^{\eta,\epsilon}(p,q)$.

\medskip
\noindent
{\it Claim 2:}
For every $(w,\varepsilon)\in\mathfrak{I}_n^{\eta,\epsilon}(p,q)$ there is a basis $\underline{v}=(v_1,\ldots,v_n)$ which is simultaneously $(w,\varepsilon)$-dual and $(w,\varepsilon)$-conjugate and satisfies (\ref{3.3.18}).

\medskip

Assuming Claims 1 and 2, the proof of the proposition proceeds as follows. For every $(w,\varepsilon)\in\mathfrak{I}_n(p,q)$ the inclusions
\begin{eqnarray}
\label{proof.P3.1} & \{\mathcal{F}(\underline{v}):\mbox{$\underline{v}$ is $(w,\varepsilon)$-conjugate
 and satisfies (\ref{3.3.18})}\}\subset\mathcal{O}_{(w,\varepsilon)}\cap X_\omega,
 \\
\label{proof.P3.2} & \{\mathcal{F}(\underline{v}):\mbox{$\underline{v}$ is $(w,\varepsilon)$-dual
 and satisfies (\ref{3.3.18})}\}\subset\mathfrak{O}_{(w,\varepsilon)}\cap X_\omega, \\
\label{proof.P3.3} & \{\mathcal{F}(\underline{v}):\mbox{$\underline{v}$ is $(w,\varepsilon)$-dual and $(w,\varepsilon)$-conjugate and satisfies (\ref{3.3.18})}\}\\ \nonumber
& \subset
\mathcal{O}_{(w,\varepsilon)}\cap\mathfrak{O}_{(w,\varepsilon)}\cap X_\omega
\end{eqnarray}
clearly hold. Hence Claim 2 shows that 
$\mathcal{O}_{(w,\varepsilon)}^{\eta,\epsilon}$, $\mathfrak{O}_{(w,\varepsilon)}^{\eta,\epsilon}$, and
$\mathcal{O}_{(w,\varepsilon)}^{\eta,\epsilon}\cap\mathfrak{O}_{(w,\varepsilon)}^{\eta,\epsilon}$ are all nonempty whenever $(w,\varepsilon)\in\mathfrak{I}_n^{\eta,\epsilon}(p,q)$.
By Claim 1, Lemma \ref{lemma-2.2.1}, and Proposition \ref{P2}\,{\rm (c)}, the $K$-orbits of $X_\omega$ are exactly the subsets $\mathcal{O}_{(w,\varepsilon)}^{\eta,\epsilon}$.
On the other hand the subsets $\mathfrak{O}_{(w,\varepsilon)}\cap X_\omega$ (for $(w,\varepsilon)\in\mathfrak{I}_n(p,q)$) are $G^0$-stable and pairwise disjoint. By Matsuki duality there is a bijection between $K$-orbits and $G^0$-orbits. This forces $\mathfrak{O}_{(w,\varepsilon)}^{\eta,\epsilon}=\mathfrak{O}_{(w,\varepsilon)}\cap X_\omega$ to be a single $G^0$-orbit whenever $(w,\varepsilon)\in\mathfrak{I}_n^{\eta,\epsilon}(p,q)$ and  $\mathfrak{O}_{(w,\varepsilon)}\cap X_\omega$ to be empty if $(w,\varepsilon)\notin\mathfrak{I}_n^{\eta,\epsilon}(p,q)$.
This proves Proposition \ref{P3}\,{\rm (b)}.

Since the orbits $\mathcal{O}_{(w,\varepsilon)},\mathfrak{O}_{(w,\varepsilon)}\subset X$ are Matsuki dual (see Proposition \ref{P2}\,{\rm (d)}), their intersection $\mathcal{O}_{(w,\varepsilon)}\cap\mathfrak{O}_{(w,\varepsilon)}$ is compact, hence such is the intersection $\mathcal{O}_{(w,\varepsilon)}^{\eta,\epsilon}\cap
\mathfrak{O}_{(w,\varepsilon)}^{\eta,\epsilon}$ for all $(w,\varepsilon)\in\mathfrak{I}_n^{\eta,\epsilon}(p,q)$.  This implies that $\mathcal{O}_{(w,\varepsilon)}^{\eta,\epsilon}$ and $\mathfrak{O}_{(w,\varepsilon)}^{\eta,\epsilon}$ are Matsuki dual (see \cite{Gindikin-Matsuki}), and therefore part {\rm (c)} of the statement.

Let $(w,\varepsilon)\in\mathfrak{I}_n^{\eta,\epsilon}(p,q)$. Since $\mathcal{O}_{(w,\varepsilon)}^{\eta,\epsilon}$ and $\mathfrak{O}_{(w,\varepsilon)}^{\eta,\epsilon}$ are Matsuki dual, their intersection is a single $K\cap G^0$-orbit. The set on the left-hand side in (\ref{proof.P3.3}) is nonempty (by Claim 2) and $K\cap G^0$-stable, hence equality holds in (\ref{proof.P3.3}). Similarly, the sets on the left-hand sides in (\ref{proof.P3.1}) and (\ref{proof.P3.2}) are nonempty (by Claim 2) and respectively $K$- and $G^0$-stable. Since $\mathcal{O}_{(w,\varepsilon)}^{\eta,\epsilon}=\mathcal{O}_{(w,\varepsilon)}\cap X_\omega$ and $\mathfrak{O}_{(w,\varepsilon)}^{\eta,\epsilon}=\mathfrak{O}_{(w,\varepsilon)}\cap X_\omega$ are respectively a $K$-orbit and a $G^0$-orbit, equality holds in (\ref{proof.P3.1}) and (\ref{proof.P3.2}). This shows part {\rm (a)} of the statement.

Thus the proof of Proposition \ref{P3} will be complete once we establish Claims 1 and 2.

\medskip
\noindent
{\it Proof of Claim 1.}
Note that for two subspaces $A,B\subset V$ we have
$A^\perp+B^\perp=(A\cap B)^\perp$, hence
\begin{equation}
\label{proof-P3.4}
\dim A^\perp\cap B^\perp+\dim A+\dim B=\dim A\cap B+\dim V.
\end{equation}
Note also that the map $\delta$ is selfadjoint (in types BD1 and C2) or antiadjoint (in types C1 and D3) with respect to $\omega$, hence the equality $\delta(A)^\perp=\delta(A^\perp)$ holds for any subspace $A\subset V$ in all types.

Let $(w,\varepsilon)\in\mathfrak{I}_n(p,q)$ such that $\mathcal{O}_{(w,\varepsilon)}\cap X_\omega\not=\emptyset$.
Let $\mathcal{F}=(F_0,\ldots,F_n)\in \mathcal{O}_{(w,\varepsilon)}\cap X_\omega$.

By applying (\ref{proof-P3.4}) to $A=\delta\left( F_k\right)$ and $B=F_\ell$ for $1\leq k,\ell\leq n$ we obtain
\begin{equation}
\label{proof-P3.5}
\dim \delta\left( F_{n-k}\right)\cap F_{n-\ell}+k+\ell=\dim \delta \left(F_k\right)\cap F_\ell+n.
\end{equation}
On the other hand since $\mathcal{F}\in\mathcal{O}_{(w,\varepsilon)}$ Proposition \ref{P2}\,{\rm (b)} gives
\begin{equation}
\label{proof-P3.6}
\dim \delta\left( F_{n-k}\right)\cap F_{n-\ell}=|\{j=1,\ldots,n-\ell:1\leq w_j\leq n-k\}|
\end{equation}
and
\begin{eqnarray}
\label{proof-P3.7}
\dim\delta \left(F_k\right)\cap F_\ell & = & |\{j=1,\ldots,\ell:1\leq w_j\leq k\}| \\
\nonumber & = & \ell-|\{j=1,\ldots,\ell:w_j\geq k+1\}| \\
\nonumber & = & \ell-(n-k-|\{j\geq \ell+1:w_j\geq k+1\}|) \\
\nonumber & = & \ell+k-n+|\{j=1,\ldots,n-\ell:w_0ww_0(j)\leq n-k\}|
\end{eqnarray}
for all $k,\ell\in\{1,\ldots,n\}$. Comparing (\ref{proof-P3.5})--(\ref{proof-P3.7}) we conclude that $w=w_0ww_0$.

Let $k\in\{1,\ldots,n\}$ such that $w_k=k$. Since $ww_0=w_0w$, we have $w_{n-k+1}=n-k+1$.
Applying (\ref{proof-P3.4}) with $A=F_k$ (resp., $A=F_{k-1}$) and $B=V_+$, we get
\[1+\dim F_{k-1}\cap V_+-\dim F_k\cap V_+=\dim F_{n-k+1}\cap V_--\dim F_{n-k}\cap V_-\]
in types BD1 and C2 (where $V_+^\perp=V_-$), whence
\begin{eqnarray*}
\varepsilon_k=1 & \Leftrightarrow &  \dim F_k\cap V_+=\dim F_{k-1}\cap V_++1 \\
 & \Leftrightarrow &  \dim F_{n-k+1}\cap V_-=\dim F_{n-k}\cap V_-\Leftrightarrow\varepsilon_{n-k+1}=1
\end{eqnarray*}
in that case.  In types C1 and D3  (where $V_+^\perp=V_+$), we get
\[1+\dim F_{k-1}\cap V_+-\dim F_k\cap V_+=\dim F_{n-k+1}\cap V_+-\dim F_{n-k}\cap V_+\,,\]
whence also
\[\varepsilon_k=1\Leftrightarrow \varepsilon_{n-k+1}=-1\,.\] 

At this point we obtain that the signed involution $(w,\varepsilon)$ satisfies conditions {\rm (i)}--{\rm (ii)} in Section \ref{S3.3}.
To conclude that $(w,\varepsilon)\in\mathfrak{I}_n^{\eta,\epsilon}(p,q)$, it remains to check that in types C2 and D3 we have $w_k\not=n-k+1$ for all $k\leq\frac{n}{2}$. Arguing by contradiction, assume that $w_k=n-k+1$. Since $\mathcal{F}\in\mathcal{O}_{(w,\varepsilon)}$ there is a $(w,\varepsilon)$-conjugate basis $\underline{v}=(v_1,\ldots,v_n)$ such that $\mathcal{F}=\mathcal{F}(\underline{v})$.
Thus $\delta(v_k)=v_{n-k+1}$ so that we can write $v_k=v_k^++v_k^-$ and $v_{n-k+1}=v_k^+-v_k^-$.
In type C2 we have $V_+^\perp=V_-$ and $\omega$ is antisymmetric, hence
\[\omega(v_k^++v_k^-,v_k^+-v_k^-)=\omega(v_k^+,v_k^+)-\omega(v_k^-,v_k^-)=0-0=0.\]
In type D3 we have $V_+^\perp=V_+$, $V_-^\perp=V_-$, and $\omega$ is symmetric hence
\[\omega(v_k^++v_k^-,v_k^+-v_k^-)=-\omega(v_k^+,v_k^-)+\omega(v_k^-,v_k^+)=0.\]
In both cases we deduce
\[F_{n-k+1}=F_{n-k}+\langle v_{n-k+1}\rangle_{\mathbb{C}}\subset F_k^\perp+F_{k-1}^\perp\cap\langle v_k\rangle_\mathbb{C}^\perp=F_k^\perp=F_{n-k},\]
a contradiction.
This completes the proof of Claim 1. 

\medskip
\noindent
{\it Proof of Claim 2.}
For $k\in\{1,\ldots,n\}$ set $k^*=n-k+1$. We can write
\[w=(c_1;c'_1)\cdots(c_s;c'_s)(c'^*_1;c^*_1)\cdots(c'^*_s;c^*_s)(d_1;d^*_1)\cdots(d_t;d^*_t)\]
where $c_1<\ldots<c_s<c^*_s<\ldots<c^*_1$, $c_j<c'_j\not=c^*_j$ for all $j$, $d_1<\ldots<d_t<d^*_t<\ldots<d^*_1$. Note that $t=0$ in types C2 and D3.
Moreover, we denote
\begin{eqnarray*}
 & \{a_1<\ldots<a_{p-t-2s}\}:=\{k:w_k=k,\ \varepsilon_k=1\},\\
 & \{b_1<\ldots<b_{q-t-2s}\}:=\{k:w_k=k,\ \varepsilon_k=-1\}.
\end{eqnarray*}

We can construct a $\phi$-orthonormal basis
\[x_1^+,\ldots,x_t^+,y^+_1,\ldots,y^+_s,y^{+*}_s,\ldots,y^{+*}_1,z^+_1,\ldots,z^+_{p-t-2s}\]
of $V_+$, and a $(-\phi)$-orthonormal basis
\[x_1^-,\ldots,x_t^-,y^-_1,\ldots,y^-_s,y^{-*}_s,\ldots,y^{-*}_1,z^-_1,\ldots,z^-_{q-t-2s}\]
of $V_-$,
such that in types BD1 and C2 (where the restriction of $\omega$ on $V_+$ and $V_-$ is nondegenerate) we have
\begin{eqnarray*}
 & \omega(x^+_j,x^+_j)=\omega(x^-_j,x^-_j)=1,\\ 
 & \omega(y^+_j,y^{+*}_j)=\omega(y^-_j,y^{-*}_j)=1,\quad\omega(y^{+*}_j,y^+_j)=\omega(y^{-*}_j,y^-_j)=\epsilon, \\
 & \omega(z^+_j,z^+_{\ell})=\left\{\begin{array}{ll} 1 & \mbox{if $j\leq \ell=p-t-2s+1-j$} \\ \epsilon & \mbox{if $j>\ell=p-t-2s+1-j, $} \end{array}\right.
\\
 & \omega(z^-_j,z^-_{\ell})=\left\{\begin{array}{ll} 1 & \mbox{if $j\leq \ell=q-t-2s+1-j$} \\ \epsilon & \mbox{if $j>\ell=q-t-2s+1-j, $} \end{array}\right.
\end{eqnarray*}
and the other values of $\omega$ on the basis to equal $0$.  In types C1 and D3 (where $V_+^\perp=V_+$, $V_-^\perp=V_-$, and in particular $p=q=\frac{n}{2}$ in this case) we require that
\begin{eqnarray*}
 & \omega(x^+_j,x^-_j)=i,\quad \omega(x^-_j,x^+_j)=\epsilon i,\\ 
 & \omega(y^+_j,y^{-*}_j)=\omega(y^-_j,y^{+*}_j)=1,\quad\omega(y^{+*}_j,y^-_j)=\omega(y^{-*}_j,y^+_j)=\epsilon, \\
 & \omega(z^+_j,z^-_{\ell})=\epsilon\omega(z^-_\ell,z^+_j)=\left\{\begin{array}{ll} 1 & \mbox{if $\ell=\tilde{j}:=\frac{n}{2}-t-2s+1-j$ and $a_j<b_{\tilde{j}}$} \\ \epsilon & \mbox{if $\ell=\tilde{j}:=\frac{n}{2}-t-2s+1-j$ and $a_j>b_{\tilde{j}}$,} \end{array}\right.
\end{eqnarray*}
while the other values of $\omega$ on the basis are $0$.
In contrast to the value of $\omega(z_j^\pm,z_\ell^\pm)$ in types BD1,C2, the value of $\omega(z_j^+,z_\ell^-)$ in types C1,D3 is not subject to a constraint but is chosen so that the basis $(v_1,\ldots,v_n)$ below satisfies (\ref{3.3.18}).

In all cases we construct a basis $(v_1,\ldots,v_n)$ by setting
\[v_{d_j}=\frac{x^+_j+ix^-_j}{\sqrt{2}},\quad v_{d_j^*}=\frac{x^+_j-ix^-_j}{\sqrt{2}},\]
\[v_{c_j}=\frac{y^+_j+y^-_j}{\sqrt{2}},\quad v_{c'_j}=\frac{y^+_j-y^-_j}{\sqrt{2}},\quad v_{c^*_j}=\frac{y^{+*}_j+y^{-*}_j}{\sqrt{2}},\quad v_{c'^*_j}=\frac{y^{+*}_j-y^{-*}_j}{\sqrt{2}},\]
\[v_{a_j}=z^+_j,\quad\mbox{and}\quad v_{b_j}=z_j^-.\]
It is straightforward to check that the basis $(v_1,\ldots,v_n)$ is both $(w,\varepsilon)$-dual and $(w,\varepsilon)$-conjugate and satisfies (\ref{3.3.18}). This completes the proof of Claim 2.
\end{proof}

\section{Orbit duality in ind-varieties of generalized flags}

\label{S4}

Following the pattern of Section~\ref{S3}, we now present our results on orbit duality in the infinite-dimensional case.  All proofs are given in Section~\ref{S4.proofs}.
\subsection{Types A1 and A2}

\label{S4.1}

The notation is as Section \ref{S:A1A2}.
For every $\ell\in\NN$ there is a unique $\ell^*\in\NN$
such that $\omega(e_\ell,e_{\ell^*})\not=0$,
and this yields a bijection $\iota:\NN\to\NN$, $\ell\mapsto\ell^*$.

Let $\mathfrak{I}_\infty(\iota)$ be the set of involutions $w:\NN\to\NN$
such that $w(\ell)=\ell^*$ for all but finitely many $\ell\in\NN$.
In particular we have $w\iota\in\mathfrak{S}_\infty$ for all $w\in\mathfrak{I}_\infty(\iota)$.
Let $\mathfrak{I}'_\infty(\iota)\subset \mathfrak{I}_\infty(\iota)$
be the subset of involutions without fixed points (i.e., such that $w(\ell)\not=\ell$ for all $\ell\in\NN$).

Let $\sigma:\NN\to(A,\prec)$ be a bijection onto a totally ordered set,
and let us consider the ind-variety of generalized flags $\mathbf{X}(\mathcal{F}_\sigma,E)$.
In Proposition \ref{P4-1.1} below we show that the $\mathbf{K}$-orbits 
and the $\mathbf{G}^0$-orbits of $\mathbf{X}(\mathcal{F}_\sigma,E)$ are parametrized by the elements of $\mathfrak{I}_\infty(\iota)$ in type A1, and by elements of $\mathfrak{I}'_\infty(\iota)$ in type A2.

\begin{definition}
Let $w\in\mathfrak{I}_\infty(\iota)$.
Let $\underline{v}=(v_1,v_2,\ldots)$ be a basis of $\mathbf{V}$ such that
\begin{equation}
\label{4.1.1}
v_\ell=e_\ell\quad\mbox{for all but finitely many $\ell\in\NN$}.
\end{equation}
We call $\underline{v}$ {\em $w$-dual} if in addition to (\ref{4.1.1}) $\underline{v}$ satisfies
\[\omega(v_\ell,v_k)=\left\{
\begin{array}{ll}
0 & \mbox{if $\ell\not=w_k$,} \\
\pm1 & \mbox{if $\ell=w_k$}
\end{array}
\quad\mbox{for all $k,\ell\in\NN$,}
\right.\]
and we call $\underline{v}$ {\em $w$-conjugate} if in addition to (\ref{4.1.1}) 
\[\gamma(v_k)=\pm v_{w_k}\quad\mbox{for all $k\in\NN$.}\]
Set
$\bs{\mathcal{O}}_w:=\{\mathcal{F}_{\sigma}(\underline{v}):\mbox{$\underline{v}$ is $w$-dual}\}$ and $\bs{\mathfrak{O}}_w:=\{\mathcal{F}_{\sigma}(\underline{v}):\mbox{$\underline{v}$ is $w$-conjugate}\}$, so that $\bs{\mathcal{O}}_w$ and $\bs{\mathfrak{O}}_w$ are subsets of the ind-variety $\mathbf{X}(\mathcal{F}_\sigma,E)$.
\end{definition}

\paragraph{\bf Notation.} 
{\rm (a)}
We use the abbreviation $\mathbf{X}:=\mathbf{X}(\mathcal{F}_\sigma,E)$. \\
{\rm (b)}
If $\mathcal{F}$ is a generalized flag weakly compatible with $E$,
then $\mathcal{F}^\perp:=\{F^\perp:F\in\mathcal{F}\}$ is also a generalized flag weakly compatible with $E$.

Let $(A^*,\prec^*)$ be the totally ordered set given by $A^*=A$ as a set and $a\prec^*a'$ whenever $a\succ a'$.
Let $\sigma^\perp:\NN\to(A^*,\prec^*)$ be defined by $\sigma^\perp(\ell)=\sigma(\ell^*)$.
Then we have
$\mathcal{F}_\sigma^\perp=\mathcal{F}_{\sigma^\perp}$.
Note that $\mathcal{F}^\perp$ is $E$-commensurable with $\mathcal{F}_{\sigma^\perp}$
whenever $\mathcal{F}$ is $E$-commensurable with $\mathcal{F}_\sigma$.
Hence the map 
\[\mathbf{X}\to\mathbf{X}^\perp:=\mathbf{X}(\mathcal{F}_{\sigma^\perp},E),\ \mathcal{F}\mapsto\mathcal{F}^\perp\]
is well defined.
We also use the abbreviation $\pmb{\mathbb{O}}^\perp_w:=(\pmb{\mathbb{O}}_{\sigma^\perp,\sigma})_w$ for all $w\in\mathfrak{S}_\infty$. \\
{\rm (c)}
We further note that $\gamma(\mathcal{F}_\sigma)=\mathcal{F}_{\sigma\circ\iota}$
and that $\gamma(\mathcal{F})\in\mathbf{X}^\gamma:=\mathbf{X}(\mathcal{F}_{\sigma\circ\iota},E)$ whenever $\mathcal{F}\in\mathbf{X}$.
We also abbreviate  $\pmb{\mathbb{O}}^\gamma_w:=(\pmb{\mathbb{O}}_{\sigma\circ\iota,\sigma})_w$ for all $w\in\mathfrak{S}_\infty$.

\medskip
Thus
$\displaystyle \mathbf{X}^\perp\times\mathbf{X}=\bigsqcup_{w\in\mathfrak{S}_\infty}\pmb{\mathbb{O}}^\perp_w$
and
$\displaystyle \mathbf{X}^\gamma\times\mathbf{X}=\bigsqcup_{w\in\mathfrak{S}_\infty}\pmb{\mathbb{O}}^\gamma_w$
(see Proposition \ref{P2-3.2}).

\begin{proposition}
\label{P4-1.1}
Let $\mathfrak{I}^\epsilon_\infty(\iota)=\mathfrak{I}_\infty(\iota)$ in type A1
and $\mathfrak{I}^\epsilon_\infty(\iota)=\mathfrak{I}'_\infty(\iota)$ in type A2.
\begin{itemize}
\item[\rm (a)] For every $w\in\mathfrak{I}^\epsilon_\infty(\iota)$, \[\bs{\mathcal{O}}_w\cap\bs{\mathfrak{O}}_w=\{\mathcal{F}_\sigma(\underline{v}):\mbox{$\underline{v}$ is $w$-dual and $w$-conjugate}\}\not=\emptyset.\]
\item[\rm (b)] For every $w\in\mathfrak{I}^\epsilon_\infty(\iota)$,
\[
\bs{\mathcal{O}}_w=\{\mathcal{F}\in\mathbf{X}:(\mathcal{F}^\perp,\mathcal{F})\in\pmb{\mathbb{O}}^\perp_{w\iota}\} \quad\mbox{and}\quad
\bs{\mathfrak{O}}_w=\{\mathcal{F}\in\mathbf{X}:(\gamma(\mathcal{F}),\mathcal{F})\in\pmb{\mathbb{O}}^\gamma_{w\iota}\}.
\]
\item[\rm (c)] The subsets $\bs{\mathcal{O}}_w$ (for $w\in\mathfrak{I}^\epsilon_\infty(\iota)$) are exactly the $\mathbf{K}$-orbits of $\mathbf{X}$.
The subsets $\bs{\mathfrak{O}}_w$ (for $w\in\mathfrak{I}^\epsilon_\infty(\iota)$) are exactly the $\mathbf{G}^0$-orbits of $\mathbf{X}$.
Moreover $\bs{\mathcal{O}}_w\cap\bs{\mathfrak{O}}_w$ is a single $\mathbf{K}\cap\mathbf{G}^0$-orbit.
\end{itemize}
\end{proposition}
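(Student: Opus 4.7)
My plan is to reduce each part of the proposition to the finite-dimensional Proposition~\ref{P1} through the exhaustion $\mathbf{V}=\bigcup_{n}V_n$. The key observation is that, for $w\in\mathfrak{I}_\infty^\epsilon(\iota)$, the element $w\iota$ lies in $\mathfrak{S}_\infty$, so $w$ agrees with $\iota$ outside a finite segment; similarly every $\mathcal{F}\in\mathbf{X}$, and every $w$-dual or $w$-conjugate basis of $\mathbf{V}$, agrees with $\mathcal{F}_\sigma$ (respectively with $E$) outside a finite segment. Thus I will fix an integer $n_0$ adapted to the data at hand so that $V_{n_0}$ is $\omega$-stable, $w$ restricts to an involution of $\{1,\ldots,n_0\}$ belonging to $\mathfrak{I}_{n_0}^\epsilon$, and all flags and bases involved agree with $\mathcal{F}_\sigma$ and $E$ outside $V_{n_0}$.

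For part~(a), I apply Proposition~\ref{P1}\,(a) to $w|_{[1,n_0]}$ on $V_{n_0}$ to produce a basis of $V_{n_0}$ that is simultaneously $w|_{[1,n_0]}$-dual and $w|_{[1,n_0]}$-conjugate; extending by $v_\ell=e_\ell$ for $\ell>n_0$ yields a basis of $\mathbf{V}$ with the required properties, since on the tail $\omega(e_\ell,e_k)=\pm 1$ iff $k=\iota(\ell)=w(\ell)$ and $\gamma(e_k)=\pm e_{\iota(k)}=\pm e_{w(k)}$, so the $w$-dual and $w$-conjugate conditions are automatically inherited.

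For part~(b), I compute $\mathcal{F}_\sigma(\underline{v})^\perp$ and $\gamma(\mathcal{F}_\sigma(\underline{v}))$ directly from the $w$-dual (resp.\ $w$-conjugate) conditions. A short calculation identifies $\mathcal{F}_\sigma(\underline{v})^\perp=\mathcal{F}_{\sigma^\perp}(\underline{v}')$ and $\gamma(\mathcal{F}_\sigma(\underline{v}))=\mathcal{F}_{\sigma\iota}(\underline{v}')$, where $\underline{v}'$ is defined by $v'_k=v_{w\iota(k)}$; simultaneously $\mathcal{F}_\sigma(\underline{v})=\mathcal{F}_{\sigma w\iota}(\underline{v}')$. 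By the remark following Proposition~\ref{P2-3.2}, this places $(\mathcal{F}^\perp,\mathcal{F})$ in $\pmb{\mathbb{O}}^\perp_{w\iota}$ and $(\gamma(\mathcal{F}),\mathcal{F})$ in $\pmb{\mathbb{O}}^\gamma_{w\iota}$, giving the forward inclusions. The reverse inclusions will descend to $X_n$ for large $n$: a flag $\mathcal{F}$ in the prescribed infinite-dimensional relative position has finite-dimensional truncation in the analogous $K_n$- or $G^0_n$-orbit via Proposition~\ref{P1}\,(b), yielding a $w|_{[1,n]}$-dual (resp.\ $w|_{[1,n]}$-conjugate) basis of $V_n$ which extends to $\mathbf{V}$ by $E$.

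For part~(c), disjointness of the $\bs{\mathcal{O}}_w$ (resp.\ $\bs{\mathfrak{O}}_w$) for distinct $w$ is immediate from part~(b) because the orbits $\pmb{\mathbb{O}}^\perp_u$ and $\pmb{\mathbb{O}}^\gamma_u$ are pairwise disjoint. Exhaustiveness and transitivity both reduce to Proposition~\ref{P1}\,(c): for any $\mathcal{F}\in\mathbf{X}$, truncation to a large $V_n$ produces a $w|_{[1,n]}$-dual (or $w|_{[1,n]}$-conjugate) basis whose infinite extension by $E$ realizes $\mathcal{F}\in\bs{\mathcal{O}}_w$ (or $\bs{\mathfrak{O}}_w$) for some $w\in\mathfrak{I}_\infty^\epsilon(\iota)$, and two such bases of $\mathbf{V}$ coinciding with $E$ outside $V_n$ are related by an element of $K_n$ (resp.\ $G^0_n$, resp.\ $K_n\cap G^0_n$) extended by the identity, yielding a single $\mathbf{K}$-orbit (resp.\ $\mathbf{G}^0$-orbit, resp.\ $\mathbf{K}\cap\mathbf{G}^0$-orbit). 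The main bookkeeping obstacle I expect lies in part~(b): matching the infinite-dimensional index $w\iota\in\mathfrak{S}_\infty$ against the finite-dimensional index $w_0\cdot(w|_{[1,n_0]})$ of Proposition~\ref{P1}\,(b). The translation hinges on recognizing that $\iota$ plays the role of $w_0$ with respect to the intrinsic ind-variety labeling $\sigma:\NN\to A$, rather than the natural order on $\{1,\ldots,n\}$ used in the finite-dimensional formulation.
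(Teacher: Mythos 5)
Your proposal follows essentially the same route as the paper: prove the forward inclusions in (b) by direct computation with dual/conjugate bases, and reduce (a), (c) and the reverse inclusions to the finite-dimensional Proposition \ref{P1} via the exhaustion $\mathbf{X}=\bigcup_n X_n$, keeping track of the reindexing permutation $\tau^{(n)}$ (the paper's maps $j_k$ and relation (\ref{T-proof.1})), with $\iota$ playing the role of $w_0$ exactly as you anticipate. The only cosmetic difference is that the paper obtains the equalities in (b) from exhaustiveness plus disjointness rather than by truncating relative positions, but this does not change the substance of the argument.
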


\subsection{Type A3}

\label{S4.2}

The notation is as in Section \ref{S:A3}.
In particular, we fix a partition $\NN=N_+\sqcup N_-$ yielding $\Phi$ as in (\ref{phi}) and we consider the corresponding
hermitian form $\phi$ and involution $\delta$ on $\mathbf{V}$.

Let $\mathfrak{I}_\infty(N_+,N_-)$ be the set of pairs $(w,\varepsilon)$ consisting of an involution $w:\NN\to\NN$ and a map $\varepsilon:\{\ell:w_\ell=\ell\}\to\{1,-1\}$ such that the subsets
\[N'_\pm=N'_\pm(w,\varepsilon):=\{\ell\in N_\pm:(w_\ell,\varepsilon_\ell)=(\ell,\pm1)\}
\]
satisfy
\[|N_\pm\setminus N'_\pm|=|\{\ell\in N_\mp:(w_\ell,\varepsilon_\ell)=(\ell,\pm 1)\}|+\frac{1}{2}|\{\ell\in\NN:w_\ell\not=\ell\}|<\infty.\]
In particular, $w\in\mathfrak{S}_\infty$.

Fix $\sigma:\NN\to (A,\prec)$ a bijection onto a totally ordered set.
We show in Proposition \ref{P4.2-2} that the $\mathbf{K}$-orbits and the $\mathbf{G}^0$-orbits of the ind-variety $\mathbf{X}:=\mathbf{X}(\mathcal{F}_\sigma,E)$ are parametrized by the elements of $\mathfrak{I}_\infty(N_+,N_-)$.

\begin{definition}
Let $(w,\varepsilon)\in\mathfrak{I}_\infty(N_+,N_-)$.
A basis $\underline{v}=(v_1,v_2,\ldots)$ of $\mathbf{V}$ such that $v_\ell=e_\ell$ for all but finitely many $\ell\in\NN$ is {\em $(w,\varepsilon)$-conjugate} if
\[\delta(v_k)=\left\{
\begin{array}{ll}
v_{w_k} & \mbox{if $w_k\not=k$,} \\
\varepsilon_kv_k & \mbox{if $w_k=k$}
\end{array}
\right.
\quad\mbox{for all $k\in\NN$,}\]
and is {\em $(w,\varepsilon)$-dual} if
\[
\phi(v_k,v_\ell)=\left\{
\begin{array}{ll}
0 & \mbox{if $\ell\not=w_k$,} \\
1 & \mbox{if $\ell=w_k\not=k$,} \\
\varepsilon_k & \mbox{if $\ell=w_k=k$}
\end{array}
\right.
\quad\mbox{for all $k,\ell\in\NN$}.
\]
Set
$\bs{\mathcal{O}}_{(w,\varepsilon)}:=\{\mathcal{F}_\sigma(\underline{v}):\mbox{$\underline{v}$ is $(w,\varepsilon)$-conjugate}\}$, $\bs{\mathfrak{O}}_{(w,\varepsilon)}:=\{\mathcal{F}_\sigma(\underline{v}):\mbox{$\underline{v}$ is $(w,\varepsilon)$-dual}\}$.

\end{definition}

\paragraph{\bf Notation}
{\rm (a)} Note that every subspace in the generalized flag $\mathcal{F}_\sigma$ is $\delta$-stable, i.e., $\delta(\mathcal{F}_\sigma)=\mathcal{F}_\sigma$.
The map $\mathbf{X}\to\mathbf{X}$, $\mathcal{F}\mapsto\delta(\mathcal{F})$ is well defined.
\\
{\rm (b)} Write $F^\dag=\{x\in\mathbf{V}:\phi(x,y)=0\ \forall y\in F\}$ and
$\mathcal{F}^\dag:=\{F^\dag:F\in \mathcal{F}\}$, which is a generalized flag weakly compatible with $E$ whenever $\mathcal{F}$ is so.

As in Section \ref{S4.1} we write $(A^*,\prec^*)$ for the totally ordered set
such that $A^*=A$ and $a\prec^*a'$ whenever $a\succ a'$.
It is readily seen that $\mathcal{F}_\sigma^\dag=\mathcal{F}_{\sigma^\dag}$
where $\sigma^\dag:\NN\to(A^*,\prec^*)$ is such that $\sigma^\dag(\ell)=\sigma(\ell)$ for all $\ell\in\NN$, and we get a well-defined map
\[\mathbf{X}\to\mathbf{X}^\dag:=\mathbf{X}(\mathcal{F}_{\sigma^\dag},E),\ \mathcal{F}\mapsto\mathcal{F}^\dag.\]
{\rm (c)}
We write $\pmb{\mathbb{O}}_w:=(\pmb{\mathbb{O}}_{\sigma,\sigma})_w$ and
$\pmb{\mathbb{O}}_w^\dag:=(\pmb{\mathbb{O}}_{\sigma^\dag,\sigma})_w$ so that
\[\mathbf{X}\times\mathbf{X}=\bigsqcup_{w\in\mathfrak{S}_\infty}\pmb{\mathbb{O}}_w
\quad\mbox{and}\quad
\mathbf{X}^\dag\times\mathbf{X}=\bigsqcup_{w\in\mathfrak{S}_\infty}\pmb{\mathbb{O}}^\dag_w
\]
(see Proposition \ref{P2-3.2}).

\begin{proposition}
\label{P4.2-2}
\begin{itemize}
\item[\rm (a)] For every $(w,\varepsilon)\in\mathfrak{I}_\infty(N_+,N_-)$ we have
\[\bs{\mathcal{O}}_{(w,\varepsilon)}\cap\bs{\mathfrak{O}}_{(w,\varepsilon)}=\{\mathcal{F}_\sigma(\underline{v}):\mbox{$\underline{v}$ is $(w,\varepsilon)$-conjugate and $(w,\varepsilon)$-dual}\}\not=\emptyset.\]
\item[\rm (b)] Let $(w,\varepsilon)\in\mathfrak{I}_\infty(N_+,N_-)$
and $\mathcal{F}=\{F'_a,F''_a:a\in A\}\in\mathbf{X}$. Then
$\mathcal{F}\in\bs{\mathcal{O}}_{(w,\varepsilon)}$ (resp., $\mathcal{F}\in\bs{\mathfrak{O}}_{(w,\varepsilon)}$) if and only if
\[(\delta(\mathcal{F}),\mathcal{F})\in\pmb{\mathbb{O}}_w\quad\mbox{(resp., $(\mathcal{F}^\dag,\mathcal{F})\in\pmb{\mathbb{O}}_w^\dag$)}\]
and
\[
\dim F''_{\sigma(\ell)}\cap \mathbf{V}_\pm/F'_{\sigma(\ell)}\cap \mathbf{V}_\pm
=
\left\{\begin{array}{ll}
1 & \mbox{if $\sigma(w_\ell)\prec\sigma(\ell)$ or $(w_\ell,\varepsilon_\ell)=(\ell,\pm1)$,} \\
0 & \mbox{otherwise}
\end{array}\right. 
\]
where $\mathbf{V}_\pm=\langle e_\ell:\ell\in N_\pm\rangle_\mathbb{C}$
(resp., for $n\in\NN$ large enough
\[
\varsigma(\phi:F''_{\sigma(\ell)}\cap V_n)=\varsigma(\phi:F'_{\sigma(\ell)}\cap V_n)+\left\{
\begin{array}{ll}
(1,1) & \mbox{if $\sigma(w_\ell)\prec\sigma(\ell)$,} \\
(1,0) & \mbox{if $(w_\ell,\varepsilon_\ell)=(\ell,1)$,} \\
(0,1) & \mbox{if $(w_\ell,\varepsilon_\ell)=(\ell,-1)$,} \\
(0,0) & \mbox{if $\sigma(w_\ell)\succ\sigma(\ell)$}
\end{array}
\right.
\]
where we 
$V_n=\langle e_k:k\leq n\rangle_\mathbb{C}$
and
$\varsigma(\phi:F)$ stands for the signature of $\phi$ on $F/F\cap F^\dag$)
for all $\ell\in\NN$.
\item[\rm (c)] The subsets $\bs{\mathcal{O}}_{(w,\varepsilon)}$ ($(w,\varepsilon)\in\mathfrak{I}_\infty(N_+,N_-)$) are exactly the $\mathbf{K}$-orbits of $\mathbf{X}$. The subsets $\bs{\mathfrak{O}}_{(w,\varepsilon)}$ ($(w,\varepsilon)\in\mathfrak{I}_\infty(N_+,N_-)$) are exactly the $\mathbf{G}^0$-orbits of $\mathbf{X}$.
Moreover $\bs{\mathcal{O}}_{(w,\varepsilon)}\cap\bs{\mathfrak{O}}_{(w,\varepsilon)}$ is a single $\mathbf{K}\cap\mathbf{G}^0$-orbit.
\end{itemize}
\end{proposition}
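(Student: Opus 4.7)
The plan is to reduce everything to the finite-dimensional Proposition~\ref{P2}, using the natural exhaustions $\mathbf{V}=\bigcup_n V_n$, $\mathbf{G}=\bigcup_n G_n$, $\mathbf{K}=\bigcup_n K_n$, $\mathbf{G}^0=\bigcup_n G^0_n$, and the induced exhaustion $\mathbf{X}=\bigcup_n X_n$. The defining condition of $\mathfrak{I}_\infty(N_+,N_-)$ guarantees that for every $(w,\varepsilon)$ there is an integer $n_0$ such that $w(\ell)=\ell$ and $\varepsilon_\ell$ coincides with the sign $\pm 1$ dictated by $\ell\in N_\pm$ for every $\ell>n_0$. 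Consequently, for every $n\geq n_0$ the truncation $(w_n,\varepsilon_n):=(w,\varepsilon)|_{\{1,\ldots,n\}}$ lies in $\mathfrak{I}_n(p_n,q_n)$ with $p_n=|N_+\cap\{1,\ldots,n\}|$ and $q_n=|N_-\cap\{1,\ldots,n\}|$.

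For part~(a), I would apply Proposition~\ref{P2}(a) to $(w_n,\varepsilon_n)$, obtain a basis of $V_n$ which is simultaneously $(w_n,\varepsilon_n)$-conjugate and $(w_n,\varepsilon_n)$-dual, and complete it with $v_\ell:=e_\ell$ for $\ell>n$; this produces a basis of $\mathbf{V}$ with the analogous infinite-dimensional properties, hence $\bs{\mathcal{O}}_{(w,\varepsilon)}\cap\bs{\mathfrak{O}}_{(w,\varepsilon)}\neq\emptyset$. The reverse inclusion would be obtained by adapting to $\mathbf{V}$ the step-by-step construction used in the proof of Proposition~\ref{P2}(a); this adaptation is legitimate because, for any $\mathcal{F}$ in the intersection, all relevant discrepancies with the standard basis $E$ are concentrated in $V_n$ for $n$ sufficiently large.

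For part~(b), the same truncation principle applies: for $\mathcal{F}\in\mathbf{X}$ and $n$ large enough, the infinite-dimensional relative position $(\delta(\mathcal{F}),\mathcal{F})\in\pmb{\mathbb{O}}_w$ (in the sense of Proposition~\ref{P2-3.2}(c)) is equivalent to the finite-dimensional relative position on $X_n\times X_n$, while the two signature conditions in the statement are the ``ind-limit'' versions of $\varsigma(\delta:\mathcal{F}\cap V_n)=\varsigma(w_n,\varepsilon_n)$ and $\varsigma(\phi:\mathcal{F}\cap V_n)=\varsigma(w_n,\varepsilon_n)$, respectively. Proposition~\ref{P2}(b) then yields the claimed characterizations. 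For part~(c), I would deduce $\mathbf{K}$-transitivity on the set of $(w,\varepsilon)$-conjugate bases from the corresponding $K_n$-transitivity supplied by Proposition~\ref{P2}(c), by choosing $n$ large enough to contain the support of the discrepancy between two given bases. Conversely, any $\mathcal{F}\in\mathbf{X}$ belongs to some $X_n$ with $n\geq n_0$, and its $K_n$-orbit is labelled by Proposition~\ref{P2}(c) via a signed involution $(w_n,\varepsilon_n)\in\mathfrak{I}_n(p_n,q_n)$, which is canonically extended to an element of $\mathfrak{I}_\infty(N_+,N_-)$. The same argument, verbatim, will handle $\mathbf{G}^0$-orbits and $\bs{\mathfrak{O}}_{(w,\varepsilon)}$, and the single-$\mathbf{K}\cap\mathbf{G}^0$-orbit statement follows from its finite-dimensional counterpart in Proposition~\ref{P2}(a).

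The main obstacle I expect is verifying the \emph{stability} of the label $(w_n,\varepsilon_n)$ under the inclusions $X_n\hookrightarrow X_{n+1}\hookrightarrow\mathbf{X}$: namely, that two nested $K_n\subset K_{n+1}$-orbits lying in the same $\mathbf{K}$-orbit of $\mathbf{X}$ carry labels extending to the same limit pair $(w,\varepsilon)\in\mathfrak{I}_\infty(N_+,N_-)$. This compatibility is what makes the parametrization of $\mathbf{X}/\mathbf{K}$ intrinsic rather than merely level-dependent, and it will have to be unpacked by combining the explicit parametrization of Proposition~\ref{P2} with the description of the embeddings $X_n\hookrightarrow\mathbf{X}$ supplied by the ind-variety structure of Sections~\ref{S2.3} and~\ref{S4.4}.
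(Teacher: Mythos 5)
Your overall route is the same as the paper's: reduce everything to Proposition \ref{P2} through the exhaustion $\mathbf{X}=\bigcup_n X_n$, complete bases of $V_n$ by the vectors $e_\ell$ for $\ell>n$, and extend finite labels to elements of $\mathfrak{I}_\infty(N_+,N_-)$. The nonemptiness in (a) and the transitivity/covering arguments in (c) are fine. But the step you defer at the end --- ``stability of the label'' under $X_n\hookrightarrow X_{n+1}\hookrightarrow\mathbf{X}$ --- is not a side verification to be unpacked later; it is the actual content of the proof, and your sketch does not contain the two ideas that resolve it. First, one direction of (b) must be proved \emph{directly}, before the orbit identification: if $\underline v$ is $(w,\varepsilon)$-conjugate (resp.\ dual), one computes $\delta(F'_a),\delta(F''_a)$ (resp.\ $(F'_a)^\dag,(F''_a)^\dag$) and the dimension (resp.\ signature) invariants explicitly, obtaining $(\delta(\mathcal F),\mathcal F)\in\pmb{\mathbb O}_w$, etc. This is what shows that the sets $\bs{\mathcal O}_{(w,\varepsilon)}$ (and likewise the $\bs{\mathfrak O}_{(w,\varepsilon)}$) are pairwise disjoint --- a fact you never establish, yet without it you cannot identify $\bs{\mathcal O}_{(w,\varepsilon)}\cap X_n$ with a single finite-dimensional orbit, so your ``Proposition \ref{P2}(b) then yields the characterizations'' is circular and the injectivity of the parametrization (needed for (b) and for the duality of Theorem \ref{theorem-1}) is unproved. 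Second, the correct finite label is not the naive truncation: since the embedding of Section \ref{S4.4} reorders coordinates by the permutation $\tau=\tau^{(n)}$ with $\sigma(\tau_1)\prec\cdots\prec\sigma(\tau_n)$, the flag $\mathcal F_\sigma(\underline{\hat v})$ equals $\mathcal F(v_{\tau_1},\ldots,v_{\tau_n})$, whose $K_n$-label is $(\tau^{-1}\check w\tau,\check\varepsilon\tau)$ rather than $(\check w,\check\varepsilon)=(w,\varepsilon)|_{\{1,\ldots,n\}}$; accordingly the extension maps must be the twisted ones of (\ref{4.5.4})--(\ref{4.5.5}), and your claimed matching of finite and infinite relative positions is not correct as stated unless $\sigma$ happens to be increasing on $\{1,\ldots,n\}$.

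Once these two points are in place, the compatibility you worry about is exactly the equality $\bs{\mathcal O}_{(\hat w,\hat\varepsilon)}\cap X_n=\mathcal O_{(w,\varepsilon)}$ (and its analogue for $\bs{\mathfrak O}$), i.e.\ (\ref{T-proof.2}): the inclusion $\supset$ is your completion argument, while $\subset$ follows because the orbits $\mathcal O_{(w,\varepsilon)}$ exhaust $X_n$ by Proposition \ref{P2}(c), $\mathfrak{I}_\infty(N_+,N_-)=\bigcup_n j_n\bigl(\mathfrak I_n(p_n,q_n)\bigr)$, and the infinite-dimensional sets are pairwise disjoint. Parts (a), (c) and the remaining direction of (b) then fall out as you envisage, with no need to redo the step-by-step basis construction in $\mathbf V$. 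So: right strategy, essentially the paper's, but the proposal stops short of the key lemma and misstates the truncation that makes it true.
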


\subsection{Types B, C, D}

\label{S4.3}

Assume that $\mathbf{V}$ is endowed with a nondegenerate symmetric or symplectic form $\omega$, determined by a matrix $\Omega$ as in (\ref{omega}).
Let $\iota:\NN\to\NN$, $\ell\mapsto\ell^*$ 
satisfy $\omega(e_\ell,e_{\ell^*})\not=0$ for all $\ell$.

Let $\NN=N_+\sqcup N_-$ be a partition such that $N_+,N_-$ are either both $\iota$-stable or such that $\iota(N_+)=N_-$.
As before, let $\phi$ and $\delta$ be the hermitian form and the involution of $\mathbf{V}$
corresponding to this partition.
The following table summarizes the different cases.
\begin{center}
\begin{tabular}{|c|c|c|}
\cline{2-3}
\multicolumn{1}{c|}{ } & \begin{tabular}{c} $\omega$ symmetric \\ $\epsilon=1$ \end{tabular} & \begin{tabular}{c} $\omega$ symplectic \\ $\epsilon=-1$ \end{tabular} \\
\hline
\begin{tabular}{c} $\iota(N_\pm)\subset N_\pm$ \\ $\eta=1$ \end{tabular} & type BD1 & type C2 \\
\hline
\begin{tabular}{c} $\iota(N_\pm)=N_\mp$ \\ $\eta=-1$ \end{tabular} & type D3 & type C1 \\
\hline
\end{tabular}
\end{center}

Let $\mathfrak{I}_\infty^{\eta,\epsilon}(N_+,N_-)\subset
\mathfrak{I}_\infty(N_+,N_-)$ be the subset of pairs
$(w,\varepsilon)$ such that 
\begin{itemize}
\item[\rm (i)] $\iota w=w\iota$ (hence the set $\{\ell:w_\ell=\ell\}$ is $\iota$-stable);
\item[\rm (ii)] $\varepsilon_{\iota(k)}=\eta\varepsilon_k$ for all $k\in\{\ell:w_\ell=\ell\}$;
\item[\rm (iii)] and if $\eta\epsilon=-1$: $w_k\not=\iota(k)$ for all $k\in\mathbb{N}^*$.
\end{itemize}

Let $\mathcal{F}_\sigma$ be an $\omega$-isotropic maximal generalized flag compatible with $E$.
Thus $\sigma:\NN\to(A,\prec)$ is a bijection
onto a totally ordered set $(A,\prec)$ endowed with an (involutive) antiautomorphism of ordered sets
$\iota_A:(A,\prec)\to (A,\prec)$ such that $\sigma\iota=\iota_A\sigma$.
The following statement shows that the $\mathbf{K}$-orbits and the $\mathbf{G}^0$-orbits
of the ind-variety $\mathbf{X}_\omega:=\mathbf{X}_\omega(\mathcal{F}_\sigma,E)$
are parametrized by the elements of the set $\mathfrak{I}_\infty^{\eta,\epsilon}(N_+,N_-)$.

\begin{proposition}
\label{P4.3}
We consider bases $\underline{v}=(v_1,v_2,\ldots)$ of $\mathbf{V}$ such that 
\begin{equation}
\label{4-3.1}
\omega(v_k,v_\ell)\not=0\quad\mbox{if and only if}\quad \ell=\iota(k).
\end{equation}
\begin{itemize}
\item[\rm (a)] For every $(w,\varepsilon)\in\mathfrak{I}^{\eta,\epsilon}_\infty(N_+,N_-)$ we have
\begin{eqnarray*}
 & \bs{\mathcal{O}}_{(w,\varepsilon)}^{\eta,\epsilon}:=\bs{\mathcal{O}}_{(w,\varepsilon)}\cap\mathbf{X}_\omega=\{\mathcal{F}_\sigma(\underline{v}):\mbox{$\underline{v}$ is $(w,\varepsilon)$-conjugate and satisfies (\ref{4-3.1})}\}\not=\emptyset, \\
 & \bs{\mathfrak{O}}_{(w,\varepsilon)}^{\eta,\epsilon}:=\bs{\mathfrak{O}}_{(w,\varepsilon)}\cap\mathbf{X}_\omega=\{\mathcal{F}_\sigma(\underline{v}):\mbox{$\underline{v}$ is $(w,\varepsilon)$-dual and satisfies (\ref{4-3.1})}\}\not=\emptyset, \\
 & \bs{\mathcal{O}}_{(w,\varepsilon)}^{\eta,\epsilon}\cap\bs{\mathfrak{O}}_{(w,\varepsilon)}^{\eta,\epsilon}=\{\mathcal{F}_\sigma(\underline{v}):\mbox{$\underline{v}$ is $(w,\varepsilon)$-conjugate, $(w,\varepsilon)$-dual and satisfies (\ref{4-3.1})}\}\not=\emptyset.
\end{eqnarray*}
\item[\rm (b)] The subsets $\bs{\mathcal{O}}_{(w,\varepsilon)}^{\eta,\epsilon}$
($(w,\varepsilon)\in\mathfrak{I}_\infty^{\eta,\epsilon}(N_+,N_-)$)
are exactly the $\mathbf{K}$-orbits of $\mathbf{X}_\omega$.
The subsets $\bs{\mathfrak{O}}_{(w,\varepsilon)}^{\eta,\epsilon}$
($(w,\varepsilon)\in\mathfrak{I}_\infty^{\eta,\epsilon}(N_+,N_-)$)
are exactly the $\mathbf{G}^0$-orbits of $\mathbf{X}_\omega$.
Moreover $\bs{\mathcal{O}}_{(w,\varepsilon)}^{\eta,\epsilon}\cap\bs{\mathfrak{O}}_{(w,\varepsilon)}^{\eta,\epsilon}$ is a single $\mathbf{K}\cap\mathbf{G}^0$-orbit.
\end{itemize}
\end{proposition}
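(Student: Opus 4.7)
The plan is to reduce Proposition \ref{P4.3} to Proposition \ref{P4.2-2} in exactly the same spirit as Proposition \ref{P3} was reduced to Proposition \ref{P2} in the finite-dimensional case. Write $\widetilde{\mathbf{K}} := \{g \in \mathbf{G}(E) : \delta g = g\delta\}$ for the full ``type A3'' symmetric ind-subgroup, so that $\mathbf{K} = \widetilde{\mathbf{K}} \cap \mathbf{G}(E,\omega)$, and analogously put $\widetilde{\mathbf{G}}^0 := \{g \in \mathbf{G}(E) : \phi(gx,gy) = \phi(x,y)\ \forall x,y\}$, so that $\mathbf{G}^0 = \widetilde{\mathbf{G}}^0 \cap \mathbf{G}(E,\omega)$. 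Proposition \ref{P4.2-2} already describes the $\widetilde{\mathbf{K}}$- and $\widetilde{\mathbf{G}}^0$-orbits on $\mathbf{X} := \mathbf{X}(\mathcal{F}_\sigma,E)$, so the task is to intersect each such orbit with the closed $\mathbf{G}(E,\omega)$-stable subvariety $\mathbf{X}_\omega$ and to identify the resulting $\mathbf{K}$- and $\mathbf{G}^0$-orbits, which we shall do level by level using Lemma \ref{lemma-2.2.1}.

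For part (a), given $(w,\varepsilon) \in \mathfrak{I}_\infty^{\eta,\epsilon}(N_+,N_-)$, I construct a basis $\underline{v} = (v_1, v_2, \ldots)$ of $\mathbf{V}$ that is simultaneously $(w,\varepsilon)$-conjugate, $(w,\varepsilon)$-dual, and satisfies (\ref{4-3.1}), with $v_\ell = e_\ell$ outside a finite set. The ``support'' $S := \{\ell : w_\ell \neq \ell\} \cup \{\ell \in N_\pm : \varepsilon_\ell \neq \pm 1\}$ is finite by the definition of $\mathfrak{I}_\infty(N_+,N_-)$ and $\iota$-stable by conditions (i)--(ii). Choose $n \in \NN$ so that $S \cup \iota(S) \subset \{1,\ldots,n\}$ and $\{1,\ldots,n\}$ is $\iota$-stable; on $V_n$ apply Claim 2 in the proof of Proposition \ref{P3} to obtain $v_1, \ldots, v_n$ with the required properties, and set $v_\ell := e_\ell$ for $\ell > n$. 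The full basis then has the desired global properties.

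For part (b), the argument has three parts. First, by Proposition \ref{P4.2-2} every $\mathbf{K}$-orbit on $\mathbf{X}_\omega$ lies inside some intersection $\bs{\mathcal{O}}_{(w,\varepsilon)} \cap \mathbf{X}_\omega$. Second, any two points of such an intersection are both contained in a finite-dimensional isotropic flag variety $X_m^\omega \subset \mathbf{X}_\omega$ (choosing $m$ large so that both flags and a conjugating element of $\widetilde{\mathbf{K}}$ lie in $G_m$); since $\widetilde{\mathbf{K}} \cap G_m = \mathrm{GL}(V_m \cap \mathbf{V}_+) \times \mathrm{GL}(V_m \cap \mathbf{V}_-)$ satisfies the hypothesis of Lemma \ref{lemma-2.2.1}\,{\rm (b)}, the two points lie in a single $\mathbf{K} \cap G_m$-orbit, hence in a single $\mathbf{K}$-orbit. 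Third, part (a) ensures nonemptiness when $(w,\varepsilon) \in \mathfrak{I}_\infty^{\eta,\epsilon}(N_+,N_-)$; conversely, if $(w,\varepsilon) \notin \mathfrak{I}_\infty^{\eta,\epsilon}(N_+,N_-)$, then restricting any hypothetical element of $\bs{\mathcal{O}}_{(w,\varepsilon)} \cap \mathbf{X}_\omega$ to a suitable $X_m^\omega$ would contradict Claim 1 in the proof of Proposition \ref{P3} (conditions (i)--(iii) translate faithfully between finite and infinite level because the obstruction occurs at any finite truncation that contains the support $S$). A parallel argument with $\widetilde{\mathbf{G}}^0$ (which also preserves $\mathbf{V}_+ \oplus \mathbf{V}_-$) yields the $\mathbf{G}^0$-orbit classification. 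Finally, $\bs{\mathcal{O}}_{(w,\varepsilon)}^{\eta,\epsilon} \cap \bs{\mathfrak{O}}_{(w,\varepsilon)}^{\eta,\epsilon}$ is nonempty by (a), and applying Lemma \ref{lemma-2.2.1} at finite level to $H = (\widetilde{\mathbf{K}} \cap \widetilde{\mathbf{G}}^0) \cap G_m$ shows it is a single $\mathbf{K} \cap \mathbf{G}^0$-orbit.

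The main obstacle will be the careful bookkeeping involved in the passage between ind-level and finite level: one must verify that any two points of $\bs{\mathcal{O}}_{(w,\varepsilon)} \cap \mathbf{X}_\omega$ together with a conjugating ind-group element can be jointly realized inside some $G_m$ acting on $X_m^\omega$, and that the combinatorial conditions (i)--(iii) cutting out $\mathfrak{I}_\infty^{\eta,\epsilon}(N_+,N_-) \subset \mathfrak{I}_\infty(N_+,N_-)$ are exactly the inductive limits of the finite-level conditions cutting out $\mathfrak{I}_n^{\eta,\epsilon}(p,q) \subset \mathfrak{I}_n(p,q)$. Once this compatibility is set up, no genuinely new argument is needed beyond the finite-dimensional Proposition \ref{P3} and the already-established Proposition \ref{P4.2-2}.
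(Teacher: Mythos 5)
Your overall strategy---push everything down to a finite level of the exhaustion and quote the finite-dimensional results---is exactly the paper's route: the paper works with the maps $j_{n_k}$, the observation $\iota(\tau_\ell)=\tau_{n-\ell+1}$ (valid because $\mathcal{F}_\sigma$ is $\omega$-isotropic), and the identities $\bs{\mathcal{O}}^{\eta,\epsilon}_{(\hat w,\hat\varepsilon)}\cap(X_{n})_\omega=\mathcal{O}^{\eta,\epsilon}_{(w,\varepsilon)}$, $\bs{\mathfrak{O}}^{\eta,\epsilon}_{(\hat w,\hat\varepsilon)}\cap(X_{n})_\omega=\mathfrak{O}^{\eta,\epsilon}_{(w,\varepsilon)}$ of (\ref{T-proof.3}), and then cites Proposition \ref{P3}. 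Your part (a) (Claim 2 at a large $\iota$-stable level, completed by the $e_\ell$) and your $\mathbf{K}$-side of part (b) (Lemma \ref{lemma-2.2.1}\,(b) at level $m$ with $H=\widetilde{\mathbf{K}}\cap G_m=\mathrm{GL}(V_m\cap\mathbf{V}_+)\times\mathrm{GL}(V_m\cap\mathbf{V}_-)$) are sound, modulo the $\tau^{(n)}$-bookkeeping you yourself flag; note also that part (a) as you state it only gives nonemptiness, the displayed set equalities then following from the single-orbit statements together with the $\mathbf{K}$- resp.\ $\mathbf{G}^0$-stability of the basis-described sets.

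There is, however, a genuine gap on the $\mathbf{G}^0$-side and in the final $\mathbf{K}\cap\mathbf{G}^0$ statement: the ``parallel argument with $\widetilde{\mathbf{G}}^0$'' cannot go through Lemma \ref{lemma-2.2.1}. First, $\widetilde{\mathbf{G}}^0\cap G_m$ is the unitary group of $\phi|_{V_m}$ and does \emph{not} preserve the subspaces $V_m\cap\mathbf{V}_\pm$ (it only preserves the Hermitian form), so Lemma \ref{lemma-2.2.1}\,(b) is not applicable; your parenthetical claim that $\widetilde{\mathbf{G}}^0$ preserves the decomposition is false. Second, hypothesis (\ref{2.2.1}) itself fails for this group, and likewise for $(\widetilde{\mathbf{K}}\cap\widetilde{\mathbf{G}}^0)\cap G_m$: already for $g=\mathrm{id}$ one has $\mathbb{C}[g^*g]\cap\mathrm{GL}(V_m)=\mathbb{C}^*\,\mathrm{id}$, which is not contained in a unitary group; the lemma is designed for complex subgroups stable under these polynomial operations, not for real forms. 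This is precisely why the paper's finite-dimensional proof of Proposition \ref{P3} treats only the $K$-side by Lemma \ref{lemma-2.2.1} and obtains the $G^0$-side from Matsuki duality (the bijection of orbit sets forces each $\mathfrak{O}_{(w,\varepsilon)}\cap X_\omega$ to be a single $G^0$-orbit or empty), and gets the single-$(K\cap G^0)$-orbit statement from compactness of the intersection via Gindikin--Matsuki. The repair is immediate and collapses your argument onto the paper's: at the chosen finite level do not rerun Lemma \ref{lemma-2.2.1} for the real groups, but simply invoke Proposition \ref{P3}\,(a)--(c) (and Claim 1 for the emptiness direction) through the correspondence $(w,\varepsilon)\mapsto j_m(w,\varepsilon)$, using $\iota(\tau_\ell)=\tau_{m-\ell+1}$ to match the conditions (i)--(iii) on both sides.
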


\subsection{Ind-variety structure}

\label{S4.4}

In this section we recall from \cite{Dimitrov-Penkov} the ind-variety structure on $\mathbf{X}$ and $\mathbf{X}_\omega$.

Recall that $E=(e_1,e_2,\ldots)$ is a (countable) basis of $\mathbf{V}$.
Fix  an $E$-compatible maximal generalized flag $\mathcal{F}_\sigma$ corresponding
to a bijection $\sigma:\NN\to(A,\prec)$ onto a totally ordered set, and let
$\mathbf{X}=\mathbf{X}(\mathcal{F}_\sigma,E)$.

Let $V_n:=\langle e_1,\ldots,e_n\rangle_\mathbb{C}$ and
let $X_n$ denote the variety of complete flags of $V_n$ defined as in (\ref{2.2.X}).
There are natural inclusions
$V_n\subset V_{n+1}$
and
\begin{equation}
\label{4.4-1}
\mathrm{GL}(V_n)\cong\{g\in \mathrm{GL}(V_{n+1}):g(V_n)=V_n,\ g(e_{n+1})=e_{n+1}\}\subset \mathrm{GL}(V_{n+1}),
\end{equation}
and we obtain a $\mathrm{GL}(V_n)$-equivariant embedding
\[\iota_n=\iota_n(\sigma):X_n\to X_{n+1},\ (F_k)_{k=0}^n\mapsto (F'_k)_{k=0}^{n+1}\]
by letting
\[F'_k:=\left\{
\begin{array}{ll}
F_k & \mbox{if $a_k\prec\sigma(n+1)$} \\
F_{k-1}\oplus\langle e_{n+1}\rangle_\mathbb{C} & \mbox{if $a_k\succeq \sigma(n+1)$}
\end{array}
\right.\]
where $a_1\prec a_2\prec\ldots\prec a_{n+1}$ are the elements of the set $\{\sigma(\ell):1\leq \ell\leq n+1\}$ written in increasing order.
Therefore, we get a chain of embeddings (which are morphisms of algebraic varieties)
\[\cdots \hookrightarrow X_{n-1} \stackrel{\iota_{n-1}}{\hookrightarrow} X_n \stackrel{\iota_{n}}{\hookrightarrow} X_{n+1} \stackrel{\iota_{n+1}}{\hookrightarrow} \cdots\]
and $\mathbf{X}$ is obtained as the direct limit 
\[\mathbf{X}=\mathbf{X}(\mathcal{F}_\sigma,E)=\lim_\to X_n.\]
In particular for each $n$ we get an embedding $\hat{\iota}_n:X_n\hookrightarrow \mathbf{X}$ 
and up to identifying $X_n$ with its image by this embedding we can view $\mathbf{X}$ as the union
$\mathbf{X}=\bigcup_{n\geq 1}X_n$.
Every generalized flag $\mathcal{F}\in\mathbf{X}$ belongs to all $X_n$ after some rank $n_\mathcal{F}$. For instance $\mathcal{F}_\sigma\in X_n$ for all $n\geq 1$.

A basis $\underline{v}=(v_1,\ldots,v_n)$ of $V_n$ can be completed into the basis of $\mathbf{V}$ denoted by $\underline{\hat{v}}:=(v_1,\ldots,v_n,e_{n+1},e_{n+2},\ldots)$, and we have
\begin{equation}
\label{completed-basis}
\hat{\iota}_n(\mathcal{F}(v_{\tau_1},\ldots,v_{\tau_n}))=\mathcal{F}_\sigma(\underline{\hat{v}})
\end{equation}
(using the notation of Sections \ref{S2.2}--\ref{S2.3})
where $\tau=\tau^{(n)}\in\mathfrak{S}_n$ is the permutation such that
$\sigma(\tau^{(n)}_1)\prec\ldots\prec\sigma(\tau^{(n)}_n)$.

Recall that the ind-topology on $\mathbf{X}$ is defined by declaring a subset $\mathbf{Z}\subset\mathbf{X}$ open (resp., closed) if every intersection $\mathbf{Z}\cap X_n$ is open (resp., closed).

Clearly the ind-variety structure on $\mathbf{X}$ is not modified if the sequence $(X_n,\iota_n)_{n\geq 1}$ is replaced by a subsequence $(X_{n_k},\iota'_k)_{k\geq 1}$
where $\iota'_k:=\iota_{n_{k+1}-1}\circ\cdots\circ\iota_{n_k}$.

In type A3 (using the notation of Section \ref{S2.1}) the subspace $V_n\subset\mathbf{V}$ is endowed with the restrictions of $\phi$ and $\delta$
hence we can define $K_n,G_n^0\subset \mathrm{GL}(V_n)$ (as in Section \ref{S3.2})
and the inclusion of (\ref{4.4-1}) restricts to natural inclusions
$K_n\subset K_{n+1}$ and $G_n^0\subset G_{n+1}^0$.

Next assume that the space $\mathbf{V}$ is endowed with a nondegenerate symmetric or symplectic form $\omega$ determined by the matrix $\Omega$ of (\ref{omega}).
The blocks $J_1,J_2,\ldots$ in the matrix $\Omega$ are of size $1$ or $2$.
We set $n_k:=|J_1|+\ldots+|J_k|$
so that the restriction of $\omega$ to each subspace $V_{n_k}$ is nondegenerate.
Hence in types A1, A2, BD1, C1, C2, and D3 we can define
the subgroups $K_{n_k},G_{n_k}^0\subset \mathrm{GL}(V_{n_k})$
(as in Section \ref{S3})
and (\ref{4.4-1}) yields natural inclusions
\[K_{n_k}\subset K_{n_{k+1}}\quad\mbox{and}\quad G^0_{n_k}\subset G^0_{n_{k+1}}.\]

Moreover, the subvariety $(X_{n_k})_\omega\subset X_{n_k}$ of isotropic flags (with respect to $\omega$) can be defined as in (\ref{2.2.Xomega}).
Assuming that the generalized flag $\mathcal{F}_\sigma$ is $\omega$-isotropic, the embedding 
$\iota'_k:X_{n_k}\hookrightarrow X_{n_{k+1}}$ maps $(X_{n_k})_\omega$ into $(X_{n_{k+1}})_\omega$ and we have
\[\mathbf{X}_\omega=\mathbf{X}_\omega(\mathcal{F}_\sigma,E)=\bigcup_{k\geq 1}(X_{n_k})_\omega\quad\mbox{and}\quad (X_{n_k})_\omega=\mathbf{X}_\omega\cap X_{n_k}\ \mbox{for all $k\geq 1$}.\]
In particular, $\mathbf{X}_\omega$ is a closed ind-subvariety of $\mathbf{X}$
(as stated in Proposition \ref{P2.3-3}).

\subsection{Proofs}

\label{S4.proofs}

\begin{proof}[Proof of Proposition \ref{P4-1.1}]
Let $\mathcal{F}=\{F'_a,F''_a:a\in A\}=\mathcal{F}_\sigma(\underline{v})$ for a basis $\underline{v}=(v_1,v_2,\ldots)$ of $\mathbf{V}$.
Let $w\in\mathfrak{I}_\infty^\epsilon(\iota)$.
If the basis $\underline{v}$ is $w$-dual, then 
\[
(F'_a)^\perp=\langle v_\ell:\sigma(w_\ell)\succeq a\rangle_\mathbb{C}
\quad\mbox{and}\quad
(F''_a)^\perp=\langle v_\ell:\sigma(w_\ell)\succ a\rangle_\mathbb{C}\,,
\]
hence $\mathcal{F}^\perp=\mathcal{F}_{\sigma^\perp\iota w}(\underline{v})$;
this yields
$(\mathcal{F}^\perp,\mathcal{F})\in\pmb{\mathbb{O}}^\perp_{w\iota}$.
If $\underline{v}$ is $w$-conjugate, then
\[
\gamma(F'_a)=\langle v_\ell:\sigma(w_\ell)\prec a\rangle_\mathbb{C}
\quad\mbox{and}\quad
\gamma(F''_a)=\langle v_\ell:\sigma(w_\ell)\preceq a\rangle_\mathbb{C}\,,
\]
whence $\gamma(\mathcal{F})=\mathcal{F}_{\sigma w}(\underline{v})$
and
$(\gamma(\mathcal{F}),\mathcal{F})\in\pmb{\mathbb{O}}^\gamma_{w\iota}$.
This proves the inclusions $\subset$ in Proposition \ref{P4-1.1}\,{\rm (b)}.
Note that these inclusions imply in particular that the subsets $\bs{\mathcal{O}}_w$, as well as $\bs{\mathfrak{O}}_w$, are pairwise disjoint.

For $w\in\mathfrak{I}_{n_k}^\epsilon$ we define $\hat{w}:\mathbb{N}^*\to\mathbb{N}^*$ by letting
\[\hat{w}(\ell)=\left\{
\begin{array}{ll}
\tau w\tau^{-1}(\ell) & \mbox{if $\ell\leq n_k$,} \\
\iota(\ell) & \mbox{if $\ell\geq n_k+1$}
\end{array}
\right.\]
where $\tau=\tau^{(n_k)}:\{1,\ldots,n_k\}\to\{1,\ldots,n_k\}$ is the
permutation such that $\sigma(\tau_1)\prec\ldots\prec\sigma(\tau_{n_k})$.
It is easy to see that we obtain a well-defined (injective) map
$j_k:\mathfrak{I}_{n_k}^\epsilon\to\mathfrak{I}_\infty^\epsilon(\iota)$, $j_k(w):=\hat{w}$,
and
\begin{equation}
\label{4.5.0}
\mathfrak{I}_\infty^\epsilon(\iota)=\bigcup_{k\geq 1}j_k(\mathfrak{I}_{n_k}^\epsilon).\end{equation}
Moreover,
given a basis $\underline{v}=(v_1,\ldots,v_{n_k})$ of $V_{n_k}$ and
the basis $\underline{\hat{v}}$ of $\mathbf{V}$ obtained by adding the vectors $e_\ell$ for $\ell\geq n_k+1$,
the implication
\begin{eqnarray}
\label{4.5.1}
 & \mbox{$(v_{\tau_1},\ldots,v_{\tau_{n_k}})$ is $w$-dual (resp., $w$-conjugate)} \\
\nonumber
 & \Rightarrow \
\mbox{$\underline{\hat{v}}$ is $\hat{w}$-dual (resp., $\hat{w}$-conjugate)}
\end{eqnarray}
clearly follows from our constructions.
Note that
\begin{equation}
\label{T-proof.1}
\bs{\mathcal{O}}_{\hat{w}}\cap X_{n_k}=\mathcal{O}_{w}
\quad
\mbox{and}
\quad
\bs{\mathfrak{O}}_{\hat{w}}\cap X_{n_k}=\mathfrak{O}_{w}
\end{equation}
where $\mathcal{O}_w,\mathfrak{O}_w\subset X_{n_k}$ are the orbits defined in Definition \ref{DA12};
indeed, the inclusions $\supset$ in (\ref{T-proof.1}) are implied by 
(\ref{completed-basis}) and (\ref{4.5.1}),
whereas the inclusions $\subset$ follow from 
Proposition \ref{P1}\,{\rm (c)}
and the fact that the subsets $\bs{\mathcal{O}}_{\hat{w}}$, as well as $\bs{\mathfrak{O}}_{\hat{w}}$,
are pairwise disjoint.
Parts {\rm (a)} and {\rm (c)} of Proposition \ref{P4-1.1} now follow
from (\ref{4.5.0})--(\ref{T-proof.1}) and Proposition \ref{P1}\,{\rm (a)}, {\rm (c)}. By Proposition \ref{P4-1.1}\,{\rm (a)} we deduce that equalities hold in Proposition \ref{P4-1.1}\,{\rm (b)}, and the proof is complete.
\end{proof}

\begin{proof}[Proof of Proposition \ref{P4.2-2}]
For every $n\geq 1$ we set $p_n=|N_+\cap\{1,\ldots,n\}|$ and $q_n=|N_-\cap\{1,\ldots,n\}|$.

Let $\mathcal{F}=\{F'_a,F''_a:a\in A\}=\mathcal{F}_\sigma(\underline{v})$ for some basis $\underline{v}=(v_1,v_2,\ldots)$ of $\mathbf{V}$.
Let $(w,\varepsilon)\in\mathfrak{I}_\infty(N_+,N_-)$.
If $\underline{v}$ is $(w,\varepsilon)$-conjugate, then
\[
\delta(F'_a)=\langle v_\ell:\sigma(w_\ell)\prec a\rangle_\mathbb{C}
\quad\mbox{and}\quad
\delta(F''_a)=\langle v_\ell:\sigma(w_\ell)\preceq a\rangle_\mathbb{C}
\]
so that $(\delta(\mathcal{F}),\mathcal{F})=(\mathcal{F}_{\sigma w}(\underline{v}),\mathcal{F}_\sigma(\underline{v}))\in\pmb{\mathbb{O}}_w$.
In addition,
\[\left\{
\begin{array}{ll}
\mbox{$F''_{\sigma(\ell)}\cap\mathbf{V}_+/F'_{\sigma(\ell)}\cap\mathbf{V}_+=\langle v_\ell\rangle_\mathbb{C}$, $F''_{\sigma(\ell)}\cap\mathbf{V}_-=F'_{\sigma(\ell)}\cap\mathbf{V}_-$} & \mbox{\!\!\!if $(w_\ell,\varepsilon_\ell)=(\ell,+1)$,} \\[1.5mm]
\mbox{$F''_{\sigma(\ell)}\cap\mathbf{V}_-/F'_{\sigma(\ell)}\cap\mathbf{V}_-=\langle v_\ell\rangle_\mathbb{C}$, $F''_{\sigma(\ell)}\cap\mathbf{V}_+=F'_{\sigma(\ell)}\cap\mathbf{V}_+$} & \mbox{\!\!\!if $(w_\ell,\varepsilon_\ell)=(\ell,-1)$,} \\[1.5mm]
\mbox{$F''_{\sigma(\ell)}\cap\mathbf{V}_+/F'_{\sigma(\ell)}\cap\mathbf{V}_+=\langle v_\ell+v_{w_\ell}\rangle_\mathbb{C}$\,,} \\[1mm]
\mbox{\qquad $F''_{\sigma(\ell)}\cap\mathbf{V}_-/F'_{\sigma(\ell)}\cap\mathbf{V}_-=\langle v_\ell- v_{w_\ell}\rangle_\mathbb{C}$} & \mbox{\!\!\!if $\sigma(w_\ell)\prec\sigma(\ell)$,} \\[1.5mm]
F''_{\sigma(\ell)}\cap \mathbf{V}_+=F'_{\sigma(\ell)}\cap \mathbf{V}_+,\ F''_{\sigma(\ell)}\cap \mathbf{V}_+=F'_{\sigma(\ell)}\cap \mathbf{V}_+ & \mbox{\!\!\!if $\sigma(w_\ell)\succ\sigma(\ell)$,}
\end{array}
\right.\]
which proves the formula for $\dim F''_{\sigma(\ell)}\cap \mathbf{V}_\pm/F'_{\sigma(\ell)}\cap \mathbf{V}_\pm$ stated in Proposition \ref{P4.2-2}\,{\rm (b)}.
If $\underline{v}$ is $(w,\varepsilon)$-dual, then we get similarly
\[
(F'_a)^\dag=\langle v_\ell:\sigma(w_\ell)\succeq a\rangle_\mathbb{C}
\quad\mbox{and}\quad
(F''_a)^\dag=\langle v_\ell:\sigma(w_\ell)\succ a\rangle_\mathbb{C}\,.
\]
Hence $(\mathcal{F}^\dag,\mathcal{F})=(\mathcal{F}_{\sigma^\dag w}(\underline{v}),\mathcal{F}_\sigma(\underline{v}))\in\pmb{\mathbb{O}}_w^\dag$.
For $n\geq 1$ large enough we have
$(w_\ell,\varepsilon_\ell)=(\ell,\pm1)$ for all $\ell\in N_\pm\cap\{n+1,n+2,\ldots\}$
and $v_\ell=e_\ell$ for all $\ell\geq n+1$.
Thus the pair $(\check{w},\check{\varepsilon}):=(w|_{\{1,\ldots,n\}},\varepsilon|_{\{1,\ldots,n\}})$ belongs to $\mathfrak{I}_n(p_n,q_n)$
whereas by (\ref{completed-basis}) we have
\[\mathcal{F}=\mathcal{F}(v_{\tau_1},\ldots,v_{\tau_n}).\]
The basis $(v_{\tau_1},\ldots,v_{\tau_n})$ of $V_n$ is $(\tau^{-1}\check{w}\tau,\check{\varepsilon}\tau)$-dual if $\underline{v}$ is $(w,\varepsilon)$-dual; the last formula in Proposition \ref{P4.2-2}\,{\rm (b)} now follows from Proposition \ref{P2}\,{\rm (b)} and this observation. Altogether this shows the ``only if'' part in 
Proposition \ref{P4.2-2}\,{\rm (b)}, which guarantees in particular that the subsets $\bs{\mathcal{O}}_{(w,\varepsilon)}$, as well as the subsets $\bs{\mathfrak{O}}_{(w,\varepsilon)}$,
are pairwise disjoint.
The ``if'' part of Proposition \ref{P4.2-2}\,{\rm (b)} follows once we show 
Proposition \ref{P4.2-2}\,{\rm (a)}.

For $(w,\varepsilon)\in\mathfrak{I}_n(p_n,q_n)$
we set
\begin{equation}
\label{4.5.4}
\hat{w}(\ell)=\left\{
\begin{array}{ll}
\tau w\tau^{-1}(\ell) & \mbox{if $\ell\leq n$,} \\
\ell & \mbox{if $\ell\geq n+1$}
\end{array}
\right.\quad\mbox{for all $\ell\in\mathbb{N}^*$,}
\end{equation}
where $\tau=\tau^{(n)}\in\mathfrak{S}_n$ is as in (\ref{completed-basis}), and
\begin{equation}
\label{4.5.5}\hat{\varepsilon}(\ell)=\left\{
\begin{array}{ll}
\varepsilon\tau^{-1}(\ell) & \mbox{if $\ell\leq n$,} \\
1 & \mbox{if $\ell\geq n+1$, $n\in N_+$,} \\
-1 & \mbox{if $\ell\geq n+1$, $n\in N_-$}
\end{array}
\right.
\end{equation}
for all $\ell\in\mathbb{N}^*$ such that $\hat{w}_\ell=\ell$.
We have readily seen that $(\hat{w},\hat{\varepsilon})\in\mathfrak{I}_\infty(N_+,N_-)$,
and in fact the so obtained map $j_n:\mathfrak{I}_n(p_n,q_n)\to\mathfrak{I}_\infty(N_+,N_-)$ is well defined, injective,
and 
\[
\mathfrak{I}_\infty(N_+,N_-)=\bigcup_{n\geq 1}j_n(\mathfrak{I}_n(p_n,q_n)).
\]
Moreover, it follows from our constructions that, given a basis $\underline{v}=(v_1,\ldots,v_n)$ of $V_n$ and the basis $\underline{\hat{v}}$ of $\mathbf{V}$ obtained by adding the vectors $e_\ell$ for $\ell\geq n+1$, we have:
\begin{eqnarray*}
 & \mbox{$(v_{\tau_1},\ldots,v_{\tau_n})$ is $(w,\varepsilon)$-conjugate (resp., dual)} \\
 & \Rightarrow \
\mbox{$\underline{\hat{v}}$ is $(\hat{w},\hat{\varepsilon})$-conjugate (resp., dual)}.
\end{eqnarray*}
As in the proof of Proposition \ref{P4-1.1} we derive the equalities
\begin{equation}
\label{T-proof.2}
\bs{\mathcal{O}}_{(\hat{w},\hat{\varepsilon})}\cap X_{n}=\mathcal{O}_{(w,\varepsilon)}
\quad
\mbox{and}
\quad
\bs{\mathfrak{O}}_{(\hat{w},\hat{\varepsilon})}\cap X_{n}=\mathfrak{O}_{(w,\varepsilon)}
\end{equation}
where $\mathcal{O}_{(w,\varepsilon)},\mathfrak{O}_{(w,\varepsilon)}\subset X_{n}$ are as in Definition \ref{DA3}.
Parts {\rm (a)} and {\rm (c)} of Proposition \ref{P4.2-2} then follow
from Proposition \ref{P2}\,{\rm (a)} and {\rm (c)}.
\end{proof}

\begin{proof}[Proof of Proposition \ref{P4.3}]
Let $n\in\{n_1,n_2,\ldots\}$ (where $n_k=|J_1|+\ldots+|J_k|$ as before) and $(p_n,q_n)=(|N_+\cap\{1,\ldots,n\}|,|N_-\cap\{1,\ldots,n\}|)$
and let $\tau=\tau^{(n)}:\{1,\ldots,n\}\to\{1,\ldots,n\}$
be the permutation such that $\sigma(\tau_1)\prec\ldots\prec\sigma(\tau_n)$.
Since the generalized flag $\mathcal{F}_\sigma$ is $\omega$-isotropic,
we must have
\[\iota(\tau_\ell)=\tau_{n-\ell+1}\quad\mbox{for all $\ell\in\{1,\ldots,n\}$}.\]
This observation easily implies that the map $j_n$ defined in the proof 
of Proposition \ref{P4.2-2}
restricts to a well-defined injective map 
\[j_n:\mathfrak{I}^{\eta,\epsilon}_n(p_n,q_n)\to \mathfrak{I}^{\eta,\epsilon}_\infty(N_+,N_-)\]
such that
\[\mathfrak{I}^{\eta,\epsilon}_\infty(N_+,N_-)=\bigcup_{k\geq 1}
j_{n_k}(\mathfrak{I}^{\eta,\epsilon}_{n_k}(p_{n_k},q_{n_k})).\]
By (\ref{T-proof.2}) for $(\hat{w},\hat{\varepsilon})=j_n(w,\varepsilon)$ we get
\begin{equation}
\label{T-proof.3}
\bs{\mathcal{O}}^{\eta,\epsilon}_{(\hat{w},\hat{\varepsilon})}\cap (X_{n})_\omega=\mathcal{O}^{\eta,\epsilon}_{(w,\varepsilon)}
\quad
\mbox{and}
\quad
\bs{\mathfrak{O}}^{\eta,\epsilon}_{(\hat{w},\hat{\varepsilon})}\cap (X_{n})_\omega=\mathfrak{O}^{\eta,\epsilon}_{(w,\varepsilon)}.
\end{equation}
Proposition \ref{P4.3} easily follows from this fact and Proposition \ref{P3}.
\end{proof}

\section{Corollaries}

\label{S5}


We start by a corollary stating that the parametrization of $\mathbf{K}$- and $\mathbf{G}^0$-orbits on $\mathbf{G}/\mathbf{B}$ depends only on the triple $\left(\mathbf{G},\mathbf{K},\mathbf{G}^0\right)$ but not on the choice of the ind-variety $\mathbf{G}/\mathbf{B}$.

\begin{corollary}
\label{C1}
Let $E,\mathbf{G},\mathbf{K},\mathbf{G}^0$ be as in Section \ref{S2.1}.
Let $\mathcal{F}_{\sigma_j}$ ($j=1,2$) be two 
$E$-compatible
maximal generalized flags, which are $\omega$-isotropic in types B,C,D,
and let $\mathbf{X}_j=\mathbf{G}/\mathbf{B}_{\mathcal{F}_{\sigma_j}}$.
Then there are natural bijections 
\[
\mathbf{X}_1/\mathbf{K}\cong\mathbf{X}_2/\mathbf{K}
\quad\mbox{and}\quad
\mathbf{X}_1/\mathbf{G}^0\cong\mathbf{X}_2/\mathbf{G}^0
\]
which commute with the duality of Theorem \ref{theorem-1}.
\end{corollary}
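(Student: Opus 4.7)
The plan is to observe that the explicit parametrizations of $\mathbf{K}$-orbits and $\mathbf{G}^0$-orbits provided by Propositions \ref{P4-1.1}, \ref{P4.2-2}, \ref{P4.3} do not involve the bijection $\sigma$ at all. First I would remind the reader that in each case the indexing set is built solely from data attached to the triple $(\mathbf{G},\mathbf{K},\mathbf{G}^0)$ together with the fixed basis $E$: namely $\mathfrak{I}^\epsilon_\infty(\iota)$ in types A1, A2 (where $\iota:\NN\to\NN$ is the involution determined by $\omega$ and $E$), $\mathfrak{I}_\infty(N_+,N_-)$ in type A3 (where $N_\pm$ come from $\delta$ and $E$), and $\mathfrak{I}^{\eta,\epsilon}_\infty(N_+,N_-)$ in types B, C, D (where both $\iota$ and $N_\pm$ are intrinsic to $\mathbf{K}\subset\mathbf{G}$ in the given basis). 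In particular, for $j=1,2$, the sets $\mathbf{X}_j/\mathbf{K}$ and $\mathbf{X}_j/\mathbf{G}^0$ are both indexed by the \emph{same} combinatorial set $\mathfrak{I}$.

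Next I would write, in each type, $\bs{\mathcal{O}}^{(j)}_{(w,\varepsilon)}\subset\mathbf{X}_j$ for the $\mathbf{K}$-orbit and $\bs{\mathfrak{O}}^{(j)}_{(w,\varepsilon)}\subset\mathbf{X}_j$ for the $\mathbf{G}^0$-orbit labeled by $(w,\varepsilon)\in\mathfrak{I}$, and define
\[
\Psi_{\mathbf{K}}\colon\mathbf{X}_1/\mathbf{K}\to\mathbf{X}_2/\mathbf{K},\quad \bs{\mathcal{O}}^{(1)}_{(w,\varepsilon)}\mapsto \bs{\mathcal{O}}^{(2)}_{(w,\varepsilon)},
\]
\[
\Psi_{\mathbf{G}^0}\colon\mathbf{X}_1/\mathbf{G}^0\to\mathbf{X}_2/\mathbf{G}^0,\quad \bs{\mathfrak{O}}^{(1)}_{(w,\varepsilon)}\mapsto \bs{\mathfrak{O}}^{(2)}_{(w,\varepsilon)}.
\]
These maps are bijections by the cited propositions (both sides are in bijection with $\mathfrak{I}$ via the same parametrization). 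In the types A1, A2 one drops the $\varepsilon$; the argument is identical.

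Finally, by construction (see the statements of Propositions \ref{P4-1.1}, \ref{P4.2-2}, \ref{P4.3} together with Theorem \ref{theorem-1}), the Matsuki duality in each $\mathbf{X}_j$ acts on parameters as the identity: $\Xi_j(\bs{\mathcal{O}}^{(j)}_{(w,\varepsilon)})=\bs{\mathfrak{O}}^{(j)}_{(w,\varepsilon)}$. Consequently the square
\[
\xymatrix{
\mathbf{X}_1/\mathbf{K}\ar[r]^{\Psi_{\mathbf{K}}}\ar[d]_{\Xi_1} & \mathbf{X}_2/\mathbf{K}\ar[d]^{\Xi_2}\\
\mathbf{X}_1/\mathbf{G}^0\ar[r]^{\Psi_{\mathbf{G}^0}} & \mathbf{X}_2/\mathbf{G}^0
}
\]
commutes tautologically, since all four arrows are expressed through the common label $(w,\varepsilon)$.

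The only real \emph{obstacle} is the bookkeeping verification that in each type the indexing data ($\iota$ and/or $N_\pm$) entering the parametrization is genuinely $\sigma$-free. This is done by inspecting Sections \ref{S4.1}--\ref{S4.3}: $\iota$ is defined by $\omega(e_\ell,e_{\iota(\ell)})\neq 0$, which uses only $E$ and $\omega$; and $N_+\sqcup N_-=\NN$ is the partition coming from $\delta$ (equivalently from $\Phi$), which uses only $E$. No step of the definitions of $\mathfrak{I}^\epsilon_\infty(\iota)$, $\mathfrak{I}_\infty(N_+,N_-)$, or $\mathfrak{I}^{\eta,\epsilon}_\infty(N_+,N_-)$ refers to $\sigma$, so nothing further needs to be checked.
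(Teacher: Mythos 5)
Your proposal is correct and matches the paper's intended argument: the paper states Corollary \ref{C1} precisely as the observation that the parametrizations of Propositions \ref{P4-1.1}, \ref{P4.2-2}, \ref{P4.3} depend only on the data $(\iota, N_\pm)$ attached to $(\mathbf{G},\mathbf{K},\mathbf{G}^0)$ and $E$, not on $\sigma$, so matching labels gives the bijections, and commutation with $\Xi$ is automatic since $\Xi$ is itself defined by the common labels $(w,\varepsilon)$.
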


Next, a straightforward counting of the parameters yields:

\begin{corollary}
\label{C2}
In Corollary \ref{C1}
the orbit sets $\mathbf{X}_j/\mathbf{K}$ and $\mathbf{X}_j/\mathbf{G}^0$
are always infinite.
\end{corollary}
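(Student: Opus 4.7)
The plan is to combine Theorem~\ref{theorem-1}(a) with the explicit combinatorial parametrizations of the finite-dimensional orbit sets. By Theorem~\ref{theorem-1}(b) the bijection $\Xi$ reduces the problem to showing that $\mathbf{X}_j/\mathbf{K}$ is infinite. For the natural exhaustion $\mathbf{X}_j=\bigcup_n X_n$ (respectively $\bigcup_k X_{n_k}$ in types B, C, D) of Section~\ref{S4.4}, Theorem~\ref{theorem-1}(a) provides embeddings $X_n/K_n\hookrightarrow \mathbf{X}_j/\mathbf{K}$, so it suffices to verify that $|X_n/K_n|$ is unbounded as the exhaustion index grows.

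The explicit parametrizations from Propositions~\ref{P1}, \ref{P2}, \ref{P3} identify $X_n/K_n$ with $\mathfrak{I}_n^\epsilon$ in types A1 and A2; with $\mathfrak{I}_n(p_n,q_n)$ in type A3; and with $\mathfrak{I}_{n_k}^{\eta,\epsilon}(p_{n_k},q_{n_k})$ in types B, C, D. I would then exhibit an obvious unbounded family in each case: involutions built from an increasing number of disjoint transpositions in types A1 and A2; signed involutions $(\mathrm{id},\varepsilon)$ with varying sign pattern in type A3, already contributing $\binom{n}{p_n}$ elements; and the symmetric or antisymmetric analogues in types B, C, D, where Remark~\ref{R-closed} records $\binom{\lfloor p_{n_k}/2\rfloor+\lfloor q_{n_k}/2\rfloor}{\lfloor p_{n_k}/2\rfloor}$ closed $K$-orbits in types BD1 and C2 and $2^{n_k/2}$ in types C1 and D3. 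In every one of these counts the cardinality tends to infinity with the exhaustion index.

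The only step that might look delicate is the verification in types B, C, D, where the symmetry constraints (i)--(iii) defining $\mathfrak{I}_{n_k}^{\eta,\epsilon}(p_{n_k},q_{n_k})$ could in principle collapse the parameter set. The construction in the proof of Proposition~\ref{P3} (Claim~2) already manufactures enough $(\eta,\epsilon)$-symmetric signed involutions to rule out any such degeneracy, so this is not an actual obstruction. Hence the corollary will follow from a direct cardinality count with no substantial difficulty.
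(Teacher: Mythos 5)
Your overall strategy is the one the paper itself uses: Corollary \ref{C2} is obtained there by ``straightforward counting of the parameters'', i.e.\ by observing that the parameter sets $\mathfrak{I}_\infty^\epsilon(\iota)$, $\mathfrak{I}_\infty(N_+,N_-)$, $\mathfrak{I}_\infty^{\eta,\epsilon}(N_+,N_-)$ of Propositions \ref{P4-1.1}, \ref{P4.2-2}, \ref{P4.3} are infinite; your route through Theorem \ref{theorem-1}(a) and unbounded finite-level counts is the same counting argument read through the exhaustion. However, one of your concrete verifications fails as written. In types B, C, D you take as your ``obvious unbounded family'' the closed $K$-orbits counted in Remark \ref{R-closed}, namely $\binom{\lfloor p_{n_k}/2\rfloor+\lfloor q_{n_k}/2\rfloor}{\lfloor p_{n_k}/2\rfloor}$ in types BD1 and C2. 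In type BD1 the partition $\NN=N_+\sqcup N_-$ may have $|N_-|=1$ (say $N_-=\{\ell_0\}$ with $\ell_0^*=\ell_0$, which is allowed since $\omega$ restricted to the line $\mathbf{V}_-$ is nondegenerate); then $q_{n_k}=1$ for all large $k$, so $\lfloor q_{n_k}/2\rfloor=0$ and the closed-orbit count equals $1$ for every $k$. Hence your assertion that ``in every one of these counts the cardinality tends to infinity with the exhaustion index'' is false in this case, and the family you exhibit does not prove unboundedness of $|X_{n_k}/K_{n_k}|$ there.

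The gap is easily repaired, because only the choice of family is at fault, not the method. In $\mathfrak{I}_{n_k}^{1,1}(p_{n_k},q_{n_k})$ with $q_{n_k}=1$ one can instead take the signed involutions with exactly one symmetric arc $(j;\,n_k+1-j)$ and all remaining vertices labelled $+$: these have signature $(n_k-1,1)$, are $(1,1)$-symmetric, and there are $\lfloor n_k/2\rfloor$ of them, which is unbounded. (Alternatively, and closer to the paper, one can bypass Remark \ref{R-closed} altogether and note directly that $\mathfrak{I}_\infty^{\eta,\epsilon}(N_+,N_-)$ is infinite, since it contains, for infinitely many pairs $k\neq \ell$ with $\ell\notin\{k,\iota(k)\}$, the involution agreeing with $\iota$ outside $\{k,\ell,\iota(k),\iota(\ell)\}$ and interchanging these in an $\iota$-equivariant way.) With this adjustment, and with your remaining counts (which are correct: involutions in types A1, A2; the $\binom{n}{p_n}$ sign patterns in type A3, unbounded because the decomposition $\NN=N_+\sqcup N_-$ is proper; $2^{n_k/2}$ in types C1, D3), the argument goes through.
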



It is important to note that, despite Corollary \ref{C1}, the topological properties of the orbits on $\mathbf{G}/\mathbf{B}$ are not the same for different choices of Borel ind-subgroups $\mathbf{B}\subset\mathbf{G}$.  The following corollary establishes criteria for the existence of open and closed orbits on $\mathbf{G}/\mathbf{B}=\mathbf{X}\left(\mathcal{F}_\sigma,E\right)$.

\begin{corollary}
\label{C3}
Let $E,\mathbf{G},\mathbf{K},\mathbf{G}^0$ be as in Section \ref{S2.1},
and let $\mathcal{F}_\sigma$ be an $E$-compatible maximal generalized flag, $\omega$-isotropic in types B,C,D, where $\sigma:\mathbb{N}^*\to (A,\prec)$ is a bijection onto a totally ordered set.
Let $\mathbf{X}=\mathbf{G}/\mathbf{B}_{\mathcal{F}_\sigma}$; i.e., $\mathbf{X}=\mathbf{X}(\mathcal{F}_\sigma,E)$ in type A and $\mathbf{X}=\mathbf{X}_\omega(\mathcal{F}_\sigma,E)$ in types B,C,D.

\begin{itemize}
\item[\rm ($\mbox{a}_1$)] In type A1, 
$\mathbf{X}$ has an open $\mathbf{K}$-orbit (equivalently, a closed $\mathbf{G}^0$-orbit) if and only if $\iota(\ell)=\ell$ for all $\ell\gg 1$ (i.e., if the matrix $\Omega$ of (\ref{omega}) contains finitely many diagonal blocks of size 2).
\item[\rm ($\mbox{a}_2$)]
In type A2, $\mathbf{X}$ has an open $\mathbf{K}$-orbit (equivalently, a closed $\mathbf{G}^0$-orbit) if and only if
for all $\ell\gg 1$ the elements
$\sigma(2\ell-1),\sigma(2\ell)$ are consecutive in $A$ and 
the number $|\{k<2\ell-1:\sigma(k)\prec\sigma(2\ell-1)\}|$ is even.
\item[\rm ($\mbox{a}'_{12}$)]
In types A1 and A2, 
$\mathbf{X}$ has at most one closed $\mathbf{K}$-orbit
(equivalently, at most one open $\mathbf{G}^0$-orbit).  $\mathbf{X}$ has a closed $\mathbf{K}$-orbit (equivalently an open $\mathbf{G}^0$-orbit) if and only if $\mathbf{X}$ contains $\omega$-isotropic generalized flags. This latter condition is equivalent to the existence of an 
involutive antiautomorphism of ordered sets $\iota_A:(A,\prec)\to(A,\prec)$ such that
$\iota_A\sigma(\ell)=\sigma\iota(\ell)$ for all $\ell\gg 1$.
\item[\rm ($\mbox{a}_3$)] In type A3,
$\mathbf{X}$ has always infinitely many closed $\mathbf{K}$-orbits (equivalently, infinitely many open $\mathbf{G}^0$-orbits).
$\mathbf{X}$ has an open $\mathbf{K}$-orbit (equivalently, a closed $\mathbf{G}^0$-orbit)
if and only if $d:=\min\{|N_+|,|N_-|\}<\infty$ and $\mathcal{F}_\sigma$
contains a $d$-dimensional and a $d$-codimensional subspace.
\item[\rm (bcd)] In types B,C,D,
$\mathbf{X}$ has always infinitely many closed $\mathbf{K}$-orbits (equivalently, open $\mathbf{G}^0$-orbits).
In types C1 and D3, $\mathbf{X}$ has never an open $\mathbf{K}$-orbit (equivalently, no closed $\mathbf{G}^0$-orbit).
In types BD1 and C2, $\mathbf{X}$ has an open $\mathbf{K}$-orbit (equivalently, a closed $\mathbf{G}^0$-orbit)
if and only if $d:=\min\{|N_+|,|N_-|\}<\infty$ and $\mathcal{F}_\sigma$ has a $d$-dimensional subspace (or equivalently it has a $d$-codimensional subspace).
\end{itemize}
\end{corollary}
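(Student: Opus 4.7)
The plan is to reduce everything to the finite-dimensional descriptions of open and closed $K_n$-orbits in Remarks~\ref{R-open}--\ref{R-closed} via the parametrizations of Propositions~\ref{P4-1.1}, \ref{P4.2-2}, \ref{P4.3}. By Theorem~\ref{theorem-1}(a), for every $n$ the intersection of a $\mathbf{K}$-orbit $\bs{\mathcal{O}}$ with the finite-dimensional piece $X_n$ (or $(X_{n_k})_\omega$ in types B, C, D) is either empty or a single $K_n$-orbit. By the definition of the ind-topology, $\bs{\mathcal{O}}$ is open (resp.\ closed) if and only if this intersection is an open (resp.\ closed) $K_n$-orbit for every $n$ large enough. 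Via the identifications (\ref{T-proof.1})--(\ref{T-proof.3}), this becomes the condition on the signed involution $(\hat{w},\hat{\varepsilon})$ parametrizing $\bs{\mathcal{O}}$ that its $\tau^{(n)}$-conjugated restriction to $\{1,\ldots,n\}$ matches the finite-level parameter of the open (resp.\ closed) orbit at every large level.

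Given this reduction, parts $(a_1)$ and $(a_2)$ become direct combinatorial translations. In type A1, the open $K_n$-orbit is $\mathcal{O}_{\mathrm{id}}$; since any $\hat{w}\in\mathfrak{I}_\infty^\epsilon(\iota)$ satisfies $\hat{w}(\ell)=\iota(\ell)$ for $\ell\gg 1$, consistency with $\mathrm{id}_n$ at every level forces $\iota$ to be eventually the identity. In type A2, the open $K_n$-orbit is $\mathcal{O}_{v_0^{(n)}}$ with $v_0^{(n)}=(1;2)(3;4)\cdots(n-1;n)$, so the $\iota$-paired indices $\{2\ell-1,2\ell\}$ must occupy consecutive $(2j-1,2j)$-positions in the $\tau^{(n)}$-reordering for $\ell\gg 1$. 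The position of $2\ell-1$ in $\tau^{(n)}$ equals $1+|\{k\le n:\sigma(k)\prec\sigma(2\ell-1)\}|$; the contributions from $k\ge 2\ell$ split into $\iota$-pairs $\{2m-1,2m\}$ that (by the assumption for large $m$) lie entirely above or below $\sigma(2\ell-1)$, hence contribute an even amount. This reduces the position-parity requirement precisely to the stated condition that $|\{k<2\ell-1:\sigma(k)\prec\sigma(2\ell-1)\}|$ is even, together with $\sigma(2\ell-1)$ and $\sigma(2\ell)$ being consecutive in $(A,\prec)$.

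Part $(a'_{12})$ would be obtained from Proposition~\ref{P1}(b), which identifies $\mathcal{O}_{w_0^{(n)}}$ with the set of $\omega$-isotropic flags in $X_n$, combined with Proposition~\ref{P2.3-3}, which says that $\mathbf{K}$ acts transitively on $\omega$-isotropic generalized flags $E$-commensurable with $\mathcal{F}_\sigma$. Thus a closed $\mathbf{K}$-orbit is unique when it exists, and it exists if and only if $\mathbf{X}$ contains an $\omega$-isotropic flag; this last condition amounts to $\mathcal{F}_\sigma$ being $E$-commensurable with $\mathcal{F}_{\sigma^\perp}$, which, unpacking weak compatibility, is exactly the condition formulated in terms of the antiautomorphism $\iota_A$. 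For the remaining parts $(a_3)$ and (bcd), the infinitude of closed $\mathbf{K}$-orbits comes from the fact that closed $K_n$-orbits correspond to signed involutions of the form $(\mathrm{id},\varepsilon)$ (or $((m;m+1),\varepsilon)$ in the exceptional BD1 case), together with the observation that $\mathfrak{I}_\infty(N_+,N_-)$ admits arbitrary finite modifications of $\hat{\varepsilon}$ relative to the canonical sign on $N_\pm$. For open orbits in types A3, BD1, and C2, the finite-level parameter involves only $t_n=\min\{p_n,q_n\}$ non-fixed pairs plus fixed points of constant sign, so an ind-level match requires $t_n$ to stabilize at $d=\min\{|N_+|,|N_-|\}<\infty$ and the extreme positions of $\tau^{(n)}$ to stabilize, which is equivalent to $\mathcal{F}_\sigma$ containing a $d$-dimensional subspace (and, by $\omega$-isotropy in the B, C, D cases, automatically a $d$-codimensional one). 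In types C1 and D3 the open $K_{n_k}$-orbit corresponds to $w_0^{(n_k)}$ or $\hat{w}_0^{(n_k)}$, which has of order $n_k$ non-fixed points; since any $(\hat{w},\hat{\varepsilon})\in\mathfrak{I}_\infty^{\eta,\epsilon}(N_+,N_-)$ has only finitely many non-fixed points, no ind-involution can match this at every level, yielding the non-existence.

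The hard part will be the combinatorial bookkeeping in parts $(a_2)$, $(a_3)$, and the BD1/C2 subcases of (bcd), where one must track how the reordering $\tau^{(n)}$ and the inserted basis vectors $e_\ell$ behave under the embeddings $\iota'_k\colon X_{n_k}\hookrightarrow X_{n_{k+1}}$ to pin down the exact numerical condition on $(\sigma,\iota,N_\pm)$; by contrast, the non-existence arguments in C1 and D3, and the uniqueness/existence dichotomy of $(a'_{12})$, are conceptual consequences of the framework once the parametrization is in place.
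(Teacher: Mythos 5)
Your proposal is correct and takes essentially the same route as the paper: the paper's own proof of this corollary is a one-line reduction to Remarks \ref{R-open} and \ref{R-closed}, Propositions \ref{P4-1.1}, \ref{P4.2-2}, \ref{P4.3}, and the relations (\ref{T-proof.1})--(\ref{T-proof.3}), which is exactly the level-by-level argument you describe (orbit open/closed in the ind-topology iff its trace on every large finite level is the open/a closed $K_n$-orbit, then translate via the parametrizations). The combinatorial translations you sketch -- eventual triviality of $\iota$ in A1, the consecutiveness/parity condition in A2, the isotropic-flag characterization and transitivity of $\mathbf{K}$ on $\mathbf{X}_\omega$ for $(\mathrm{a}'_{12})$, stabilization of the extreme positions in A3/BD1/C2, and the finite-support obstruction in C1/D3 -- are precisely the details the paper leaves implicit, and they are consistent with its framework.
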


\begin{proof}
This follows from Remarks \ref{R-open} and \ref{R-closed}, Propositions \ref{P4-1.1}, \ref{P4.2-2}, \ref{P4.3}, and relations (\ref{T-proof.1}), (\ref{T-proof.2}), (\ref{T-proof.3}).
\end{proof}


\begin{corollary}
The only situation where $\mathbf{X}$ has simultaneously open and closed $\mathbf{K}$-orbits (equivalently, open and closed $\mathbf{G}^0$-orbits) is in types
A3, BD1, C2, in the case where $d:=\min\{|N_+|,|N_-|\}<\infty$ and $\mathcal{F}_\sigma$
contains a $d$-dimensional and a $d$-codimensional subspace.
\end{corollary}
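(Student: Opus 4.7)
My plan is to derive the corollary from a case-by-case inspection using Corollary \ref{C3}. Types C1 and D3 are eliminated at once, since by Corollary \ref{C3} the ind-variety $\mathbf{X}$ admits no open $\mathbf{K}$-orbit in those cases. In types A3, BD1, and C2, closed $\mathbf{K}$-orbits always exist by Corollary \ref{C3}, so the requirement of simultaneously having an open orbit reduces to the open-orbit criterion of Corollary \ref{C3}, which is exactly the condition stated: $d:=\min\{|N_+|,|N_-|\}<\infty$ together with the presence in $\mathcal{F}_\sigma$ of a $d$-dimensional and a $d$-codimensional subspace.

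The substantive part of the argument is to rule out the two subcases of type A. Here the closed-orbit criterion from Corollary \ref{C3} provides an involutive order-reversing bijection $\iota_A:(A,\prec)\to(A,\prec)$ satisfying $\iota_A\sigma(\ell)=\sigma\iota(\ell)$ for all $\ell\gg 1$, and one combines this antiautomorphism with the open-orbit criterion to derive a contradiction.

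In type A1, the open-orbit criterion forces $\iota(\ell)=\ell$ cofinitely, so $\iota_A$ would fix $\sigma(\ell)$ for infinitely many $\ell$, contradicting the elementary fact that an order-reversing involution on a totally ordered set has at most one fixed point (two distinct fixed points $a\prec b$ would satisfy $a=\iota_A(a)\succ\iota_A(b)=b$). In type A2, the open-orbit criterion forces $\sigma(2\ell-1)$ and $\sigma(2\ell)$ to be consecutive in $A$ for $\ell\gg 1$, and the closed-orbit criterion forces $\iota_A$ to swap each such pair; picking two such pairs and noting that $\iota_A$ must reverse the linear order between them while preserving each pair as a set yields the desired contradiction. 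I expect this type-A analysis to be the delicate part of the argument, since it requires combining two a priori independent combinatorial conditions and exploiting the rigidity of order-reversing involutions on linearly ordered sets; the rest of the proof is a direct bookkeeping of Corollary \ref{C3}.
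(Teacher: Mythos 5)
Your proposal is correct and follows essentially the route the paper intends: the corollary is left as a direct consequence of Corollary \ref{C3}, and your case-by-case reading (C1, D3 excluded by the absence of open $\mathbf{K}$-orbits; A3, BD1, C2 reduced to the open-orbit criterion since closed orbits always exist there) matches it. Your supplementary argument ruling out types A1 and A2 — combining the open- and closed-orbit criteria and using that an order-reversing involution of $(A,\prec)$ has at most one fixed point, resp.\ cannot setwise preserve two disjoint pairs of consecutive elements — is exactly the missing bookkeeping and is sound.
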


\section*{Index of notation}

\begin{itemize}
\item[\S\ref{S.intro}:] $\mathbb{N}^*$, $|A|$, $\mathfrak{S}_n$, $\mathfrak{S}_\infty$, $(k;\ell)$

\smallskip
\item[\S\ref{S2.1}:] $\mathbf{G}(E)$, $\mathbf{G}(E,\omega)$, $\Omega$, $\omega$, $\gamma$, $\Phi$, $\phi$, $\delta$

\smallskip
\item[\S\ref{S2.2}:] $\mathcal{F}(v_1,\ldots,v_n)$, $\mathbb{O}_w$

\smallskip
\item[\S\ref{S2.3}:] $\mathcal{F}_\sigma(\underline{v})$, $\mathcal{F}_\sigma$,
$\mathbf{P}_\mathcal{F}$, $\mathbf{B}_\mathcal{F}$, $\mathbf{X}(\mathcal{F},E)$, $(\pmb{\mathbb{O}}_{\tau,\sigma})_w$, $\mathbf{X}_\omega(\mathcal{F},E)$

\smallskip
\item[\S\ref{S3.1}:] $\mathcal{F}^\perp$, $\gamma(\mathcal{F})$, $\mathfrak{I}_n$, $\mathfrak{I}'_n$, $\mathcal{O}_w$, $\mathfrak{O}_w$

\smallskip
\item[\S\ref{S3.2}:] $\delta(\mathcal{F})$, $\mathcal{F}^\dag$, $\varsigma(\phi:\mathcal{F})$, $\varsigma(\delta:\mathcal{F})$, $\varsigma(w,\varepsilon)$, $\mathfrak{I}_n(p,q)$, $\mathcal{O}_{(w,\varepsilon)}$, $\mathfrak{O}_{(w,\varepsilon)}$

\smallskip
\item[\S\ref{S3.3}:] $\mathfrak{I}^{\eta,\epsilon}_n(p,q)$, $\mathcal{O}^{\eta,\epsilon}_{(w,\varepsilon)}$, $\mathfrak{O}^{\eta,\epsilon}_{(w,\varepsilon)}$
\smallskip
\item[\S\ref{S4.1}:] $\iota$, $\mathfrak{I}_\infty(\iota)$, $\mathfrak{I}'_\infty(\iota)$, $\bs{\mathcal{O}}_w$, $\bs{\mathfrak{O}}_w$, $(A^*,\prec^*)$, $\sigma^\perp$, $\mathbf{X}^\perp$, $\mathbf{X}^\gamma$, $\pmb{\mathbb{O}}_w^\perp$, $\pmb{\mathbb{O}}_w^\gamma$
\smallskip
\item[\S\ref{S4.2}:] $\mathfrak{I}_\infty(N_+,N_-)$, $\bs{\mathcal{O}}_{(w,\varepsilon)}$, $\bs{\mathfrak{O}}_{(w,\varepsilon)}$, $\sigma^\dag$, $\mathbf{X}^\dag$, $\pmb{\mathbb{O}}_w$, $\pmb{\mathbb{O}}_w^\dag$
\smallskip
\item[\S\ref{S4.3}:] $\mathfrak{I}_\infty^{\eta,\epsilon}(N_+,N_-)$, $\bs{\mathcal{O}}_{(w,\varepsilon)}^{\eta,\epsilon}$, $\bs{\mathfrak{O}}_{(w,\varepsilon)}^{\eta,\epsilon}$ 
\end{itemize}

\end{document}